\newcommand\hl{\bgroup\markoverwith
	{\textcolor[rgb]{0.8,0.8,0.8}{\rule[-.5ex]{1pt}{2.5ex}}}\ULon}
\newcommand{\nm}[1]{\left\lVert {#1} \right\rVert}
\newcommand{\dual}[1]{\left\langle {#1} \right\rangle}
\journal{Journal of \LaTeX\ Temlates}
\newtheorem{definition}{\textbf{Definition}}[section]
\newtheorem{theorem}{\textbf{Theorem}}[section]
\newtheorem{assumption}{\textbf{Assumption}}[section]
\newtheorem{lemma}{\textbf{Lemma}}[section]
\newtheorem{remark}{\textbf{Remark}}[section]
\newtheorem{proposition}{\textbf{Proposition}}[section]
\newtheorem{example}{\textbf{Example}}[section]
\begin{document}

\begin{frontmatter}

\title{First-order Methods for Unconstrained Vector Optimization Problems: A Unified Majorization-Minimization Perspective}

\author[mymainaddress,mysecondaryaddress]{Jian Chen}
\author[mysecondaryaddress]{Jinjie Liu}
\author[mysecondaryaddress]{Liping Tang}
\author[mysecondaryaddress]{Xinmin Yang\corref{mycorrespondingauthor}}
\cortext[mycorrespondingauthor]{Corresponding author.\\ \indent \indent  Email addresses: \href{mailto:chenjian_math@163.com}{chenjian\_math@163.com} (Jian Chen),
	\href{mailto:jinjie.liu@cqnu.edu.cn}{jinjie.liu@cqnu.edu.cn} (Jinjie Liu), \href{mailto:tanglipings@163.com}{tanglipings@163.com} (Liping Tang), \href{mailto:xmyang@cqnu.edu.cn}{xmyang@cqnu.edu.cn} (Xinmin Yang)}

\address[mymainaddress]{College of Mathematics, Sichuan University, Chengdu 610065, China}
\address[mysecondaryaddress]{National Center for Applied Mathematics in 
	Chongqing, Chongqing Normal University, Chongqing 401331, China}
\begin{abstract}
In this paper, we develop a unified majorization-minimization scheme and convergence analysis with first-order surrogate functions for unconstrained vector optimization problems (VOPs). By selecting different surrogate functions, the unified method can be reduced to various existing first-order methods. The unified convergence analysis reveals that the slow convergence of the steepest descent method is primarily attributed to the significant gap between the surrogate and objective functions. Consequently, narrowing this surrogate gap can enhance the performance of first-order methods for VOPs. To strike a better trade-off in terms of surrogate gap and per-iteration cost, we reformulate the direction-finding subproblem and elucidate that selecting a tighter surrogate function is equivalent to using an appropriate base of the dual cone in the direction-finding subproblem. Building on this insight, we employ the Barzilai-Borwein method to narrow the surrogate gap and propose a Barzilai-Borwein descent method for VOPs (BBDVO) with polyhedral cones. By reformulating the corresponding subproblem, we provide a novel perspective on the Barzilai-Borwein descent method, bridging the gap between this method and the steepest descent method. Finally, several numerical experiments are presented to validate the efficiency of the BBDVO.
\end{abstract}

\begin{keyword}
Multiple objective programming \sep Majorization-minimization optimization \sep Barzilai-Borwein method \sep Convergence rates \sep Polyhedral cone
\MSC[2010] 90C29\sep  90C30
\end{keyword}

\end{frontmatter}


\section{Introduction}
This paper focuses on the following unconstrained vector optimization problem:
\begin{align*}
	{\rm min}_{K}~F(x), \tag{VOP}\label{VOP}
\end{align*}
where $F:\mathbb{R}^{n}\rightarrow\mathbb{R}^{m}$ is to be optimized under the partial order induced by a closed, {convex}, and pointed cone $K\subset\mathbb{R}^{m}$ with a non-empty interior, defined as follows:
$$y\preceq_{K}(\mathrm{resp.} \prec_{K})y^{\prime}\ \Leftrightarrow\ y^{\prime}-y\in K (\mathrm{resp.}\ \mathrm{int}(K)).$$
Let $K^{*}=\{c^{*}\in\mathbb{R}^{m}:\langle c^{*},y\rangle\geq0, \forall y\in K\}$ be the positive polar cone of $K$, and $C$ be a compact base of $K^{*}$, namely, $C$ is a convex set such that $0\notin \text{cl}C$ and $\mathrm{cone}(C)=K^{*}$. 
In vector optimization, it is often impossible to improve all objectives simultaneously with respect to the partial order. Therefore, the concept of optimality is defined as \textit{efficiency} \citep{J2011}, meaning that there is no better solution for an efficient solution. Specifically, the problem (\ref{VOP}) corresponds to a multiobjective optimization problem when $K=\mathbb{R}^{m}_{+}$, where $\mathbb{R}^{m}_{+}$ denotes the non-negative orthant of $\mathbb{R}^{m}$. Various applications of multiobjective optimization problems (MOPs) can be found in engineering \citep{MA2004}, economics \citep{TC2007,FW2014}, management science \citep{E1984}, environmental analysis \citep{LW1992}, machine learning \citep{SK2018,YL2021}, etc.  Although many real-world problems reformulated as vector-valued problems adhere to the partial order induced by $\mathbb{R}^{m}_{+}$, some applications, such as portfolio selection in securities markets \citep{VOP1,VOP2}, require partial orders induced by closed convex cones other than the non-negative orthant. Consequently, vector optimization problems (VOPs) have garnered significant attention in recent years.
\par Over the past two decades, descent methods have received increasing attention within the multiobjective optimization community, primarily due to the seminal work on the steepest descent method proposed by \citet{FS2000}. Inspired by Fliege and Svaiter's contributions, researchers have extended other numerical algorithms to solve multiobjective optimization problems (MOPs) \citep[see, e.g.,][]{FD2009,QG2011,P2014,FV2016,CL2016,MP2018,MP2019,TFY2019}.
 To the best of our knowledge, the study of descent methods for unconstrained vector optimization problems can be traced back to the work of \citet{GS2005}, who extended the steepest descent method for MOPs (SDMO) proposed by \citet{FS2000} to VOPs. In this context, the direction-finding subproblem at $x^{k}$ is formulated as follows:
 \begin{align*}
 	\min\limits_{d\in\mathbb{R}^{n}} \max\limits_{c^{*}\in C}\dual{c^{*}, JF(x^{k})d}+\frac{1}{2}\|d\|^{2},
 \end{align*}
where $JF(x^{k})\in\mathbb{R}^{m\times n}$ is the Jacobian matrix of $F(\cdot)$ at $x^{k}$. Similar to MOPs, several standard numerical algorithms have been extended to VOPs, including the Newton method \citep{GRS2014}, projected gradient method \citep{GI2004}, proximal point method \citep{BI2005}, conjugated gradient method \citep{LP2018} and conditional gradient method \citep{CYZ2023}. 
\par In recent years, complexity analysis of descent methods for MOPs has been extensively studied. \citet{FVV2019} and \citet{ZDH2019} established the convergence rates of SDMO under different convexity assumptions. \citet{TFY2023c} developed convergence results for the multiobjective proximal gradient method. Additionally, \citet{L2024} studied the complexity of a wide class of multiobjective descent methods with nonconvex assumption. However, \citet{CTY2023b} noted that both theoretical and empirical results indicate that existing multiobjective first-order methods exhibit slow convergence due to objective imbalances. To address this challenge, \citet{CTY2023a} proposed a Barzilai-Borwein descent method for MOPs (BBDMO) that dynamically tunes gradient magnitudes using Barzilai-Borwein's rule \citep{BB1988} in direction-finding subproblem. From a theoretical perspective, an improved linear convergence rate is confirmed by \citet{CTY2023b}, demonstrating that Barzilai-Borwein descent methods can effectively mitigate objective imbalances. 
\par Despite the extensive study of complexity analysis for MOPs, corresponding results have received little attention in VOPs. As described by \citet{CTY2022b}, the linear convergence rates of first-order descent methods for VOPs are essentially affected by $C$, which represents the base of $K^{*}$ in direction-finding subproblem. In general, $K^{*}$ admits infinitely many possible bases, and the choice of $C$ is therefore not unique. Although this choice critically influences the search direction and ultimately the convergence behavior of the algorithm, the existing literature offers no comprehensive theoretical analysis or guiding principle for selecting such a base. In the classical setting of multiobjective optimization, the subproblem of SDMO can be reformulated as
\begin{align*}
	\min\limits_{d\in\mathbb{R}^{n}} \max\limits_{\lambda\in\Delta_{m}}\dual{\lambda, JF(x^{k})d}+\frac{1}{2}\|d\|^{2},
\end{align*}
where $\Delta_{m}:=\{\lambda\succeq0:\sum_{i=1}^{m}\lambda_{i}=1\}$ is a base of $\mathbb{R}^{m}_{+}$. While this choice appears natural and convenient from a computational perspective, its theoretical justification and potential advantages over other alternatives remain largely unexplored. Hence, identifying an appropriate and theoretically sound base for $K^{*}$ constitutes a fundamental yet open problem, which is of both theoretical significance and practical relevance in the design of efficient first-order methods for VOPs. This naturally leads to the following question:
\begin{equation}\tag{Q}\label{Q}
\textit{Can we provide a theoretical guidance for the choice of the base?}
\end{equation}
\par  To answer the proposed questions, we develop a unified framework and convergence analysis of first-order methods for VOPs from a majorization-minimization perspective. The majorization-minimization principle is a versatile tool for designing novel algorithms, which has been successful employed in nonconvex optimization \citep{LMS2017}, constrained optimization \citep{Landeros2023MM}, and incremental optimization \citep{M2015,KWM2022}. The core idea behind the majorization-minimization method is to minimize a difficult optimization problem
by iteratively minimizing a simpler surrogate function that majorizes (upper-bounds) the original objective. In this paper, we extend majorization-minimization method to vector optimization, addressing the aforementioned questions. Our work aims to provide a unified framework and convergence analysis of first-order methods for VOPs from a majorization-minimization perspective. The primary contributions of this paper are summarized as follows:
\par (i) Before presenting the majorization-minimization method and its convergence analysis, we first define the concepts of strong convexity and smoothness for a vector-valued function with respect to a partial order. Leveraging these properties, we extend the notion of the condition number to VOPs, which plays a pivotal role in establishing the linear convergence of first-order methods for VOPs. To the best of our knowledge, it is the first definition of condition number to VOPs.
 \par (ii) We devise a unified majorization-minimization descent method for VOPs and develop its convergence analysis. By selecting different surrogate functions, the unified method can be reduced to several existing first-order methods. 	It is worth noting that the gap between the surrogate and objective functions significantly affects the performance of descent methods, which plays a central role in majorization-minimization optimization. Specifically, the steepest descent method for VOPs exhibits slow convergence due to the large gap between the surrogate and objective functions. To address this issue, we develop an improved descent method with a tighter surrogate function, resulting in improved linear convergence, and the rate of convergence is determined by the condition number. Interestingly, we show that selecting a tighter surrogate function is equivalent to using an appropriate base in the direction-find subproblem (see Remarks \ref{r4.4} and \ref{r5.3}). This provides a positive answer to the proposed question. 
 \par (iii) Theoretical results suggest a tighter surrogate function by using Barzilai-Borwein method, which motivates us to devise a Barzilai-Borwein descent method for VOPs (BBDVO) with polyhedral cones. By reformulating the subproblem, we observe that BBDVO is essentially the steepest descent method with an appropriately chosen base in the direction-finding subproblem (see Remark \ref{r6.3}). Furthermore, a VOP with a polyhedral cone can be transformed into an MOP using a transform matrix, which is often used to define a specific polyhedral cone. We demonstrate that the performance of BBDVO is insensitive to the choice of the transform matrix, whereas the steepest descent method is highly sensitive to it. From a majorization–minimization perspective, we further elucidate why the line search procedure in VOPs leads to a slower linear convergence rate compared with its counterpart without line search. In contrast, the backtracking strategy preserves the same linear convergence rate as first-order methods for VOPs without line search.
\par The remainder of this paper is organized as follows. Section \ref{sec2} introduces the necessary notations and definitions for later use. In Section \ref{sec3}, we present a generic majorization-minimization descent method for VOPs and analyze its convergence rates under various convexity assumptions. Sections \ref{sec4} and \ref{sec5} explore the connections between different descent methods from the perspective of majorization-minimization. Section \ref{sec6} proposes a Barzilai-Borwein descent method for VOPs with a polyhedral cone. 
 Section \ref{sec7} provides numerical results to demonstrate the efficiency of the BBDVO. Finally, conclusions are presented at the end of the paper.

\section{Preliminaries}\label{sec2}
Throughout this paper, $\mathbb{R}^{n}$ and $\mathbb{R}^{m\times n}$ denote the set of $n$-dimensional real column vectors and the set of $m\times n$ real matrices, respectively. The space $\mathbb{R}^{n}$ is equipped with the inner product $\langle\cdot,\cdot\rangle$ and the induced norm $\|\cdot\|$. The interior, boundary and the closure of a set are denoted by $\mathrm{int}(\cdot)$, $\mathrm{bd}(\cdot)$ and $\mathrm{cl}(\cdot)$, respectively. The cone generated by a set is denoted by $\textrm{cone}(\cdot)$.  For simplicity, we denote $[m]:=\{1,2,...,m\}$, $\mathbf{1}_{m}$ and $I_{m}$ the all-ones vector in $\mathbb{R}^{m}$ and identity matrix in $\mathbb{R}^{m\times m}$, respectively.
\par Since $K$ is a closed convex cone, it follows that $K=K^{**}$ (see \citep[Theorem 14.1]{R1970}),
$$K=\{y\in\mathbb{R}^{m}:\langle y,c^{*}\rangle\geq0, \forall c^{*}\in K^{*}\},$$
and $$\textrm{int}(K)=\{y\in\mathbb{R}^{m}:\langle y,c^{*}\rangle>0, \forall c^{*}\in K^{*}\setminus\{0\}\}.$$
Since $\textrm{int}(K)\neq\emptyset$, we assume that there exists a compact and convex set $C$ such that
\begin{align}
	0 &\notin C, \label{eq:notinG}\\
	\operatorname{cone}(C) &= K^*. \label{eq:coneG}
\end{align}
Therefore
\begin{align}
	K=\{y\in\mathbb{R}^{m}:\langle y,c^{*}\rangle\geq0, \forall c^{*}\in C\}, \label{G1}\\
	\textrm{int}(K)=\{y\in\mathbb{R}^{m}:\langle y,c^{*}\rangle>0, \forall c^{*}\in C\}. \label{G3}
\end{align}
The latter equality, together with the compactness of $C$, implies that
\begin{equation}\label{G2}
	\min_{c^*\in C}\{\dual{c^*,y}\}>0, ~~~\forall y\in \mathrm{int}(K).
\end{equation}
For more details on $C$, we refer the readers to \citep[pp. 400]{GS2005}.
\subsection{Vector optimization}
In the subsection, we revisit some definitions and results pertinent to VOPs. Firstly, we introduce the concept of efficiency.

\begin{definition}{\rm\citep[Definition 11.3]{J2011}}
	A vector $x^{*}\in\mathbb{R}^{n}$ is called efficient solution to {\rm(\ref{VOP})} if there exists no $x\in\mathbb{R}^{n}$ such that $F(x)\preceq_{K} F(x^{\ast})$ and $F(x)\neq F(x^{\ast})$.
\end{definition}

\begin{definition}{\rm\citep[Definition 11.5]{J2011}}\label{we}
	A vector $x^{*}\in\mathbb{R}^{n}$ is called weakly efficient solution to {\rm(\ref{VOP})} if there exists no $x\in\mathbb{R}^{n}$ such that $F(x)\prec_{K} F(x^{\ast})$.
\end{definition}

\begin{definition}{\rm\citep{GS2005}}\label{st}
	A vector $x^{\ast}\in\mathbb{R}^{n}$ is called $K$-stationary point to {\rm(\ref{VOP})} if
	$$\mathrm{range}(JF(x^{*}))\cap(-\mathrm{int}(K))=\emptyset,$$
	where $\mathrm{range}(JF(x^{*}))$ denotes the range of linear mapping given by the matrix $JF(x^{*})$.
\end{definition}

\begin{definition}{\rm\citep{GS2005}}\label{cd}
	A vector $d\in\mathbb{R}^{n}$ is called $K$-descent direction for $F(\cdot)$ at $x$ if
	$$JF(x)d\in-\mathrm{int}(K).$$
\end{definition}
\begin{remark}\label{rdd}	 
	Note that if $x\in\mathbb{R}^{n}$ is a non-stationary point, then there exists a $K$-descent direction $d\in\mathbb{R}^{n}$ such that $JF(x)d\in-\mathrm{int}(K)$. 
\end{remark}

Next, we introduce the concept of $K$-convexity for $F(\cdot)$.
\begin{definition}{\rm\citep[Definition 2.4]{J2011}}
	The objective function $F(\cdot)$ is called $K$-convex if
	$$F(\lambda x+(1-\lambda)y)\preceq_{K}\lambda F(x)+(1-\lambda)F(y)$$
	holds for all $x,y\in\mathbb{R}^{n},\ \lambda\in[0,1]$.
\end{definition}

By the continuous differentiability of $F(\cdot)$, $K$-convexity of $F(\cdot)$ is equivalent to
$$JF(x)(y-x)\preceq_{K}F(y)-F(x)$$
holds for all $x,y\in\mathbb{R}^{n}$. (see \citep[Theorem 2.20]{J2011}).

We conclude this section by elucidating the relationship between $K$-stationary points and weakly efficient solutions.
\begin{lemma}{\rm\citep{J2011}}
	Assume that the objective function $F(\cdot)$ is $K$-convex, then $x^{*}\in\mathbb{R}^{n}$ is a $K$-stationary point of {\rm(\ref{VOP})} if and only if $x^{*}$ is a weakly efficient solution of {\rm(\ref{VOP})}.
\end{lemma}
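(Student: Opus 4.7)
The plan is to prove both implications by contrapositive, leveraging the differential characterization of $K$-convexity recalled just above the lemma.

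For the direction that $K$-stationarity implies weak efficiency, I would suppose $x^{*}$ is not weakly efficient, so there exists $x\in\mathbb{R}^{n}$ with $F(x)-F(x^{*})\in-\mathrm{int}(K)$. By the gradient inequality form of $K$-convexity, $F(x)-F(x^{*})-JF(x^{*})(x-x^{*})\in K$, which I rewrite as
\begin{equation*}
JF(x^{*})(x-x^{*}) \;=\; \bigl(F(x)-F(x^{*})\bigr) \;-\; \bigl(F(x)-F(x^{*})-JF(x^{*})(x-x^{*})\bigr) \;\in\; -\mathrm{int}(K) + (-K).
\end{equation*}
The key fact I would invoke is that $\mathrm{int}(K)+K\subseteq\mathrm{int}(K)$ for any convex cone $K$ with nonempty interior, which follows because $\mathrm{int}(K)$ is open and translation by an element of $K$ preserves membership in $K$ by convexity. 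Applied to the above, this yields $JF(x^{*})(x-x^{*})\in-\mathrm{int}(K)$, placing a nonzero element of $\mathrm{range}(JF(x^{*}))$ inside $-\mathrm{int}(K)$ and contradicting $K$-stationarity.

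For the converse direction, I would not even need $K$-convexity. Suppose $x^{*}$ is not $K$-stationary; then by Definition \ref{st} there is $d\in\mathbb{R}^{n}$ with $JF(x^{*})d\in-\mathrm{int}(K)$ (a $K$-descent direction as in Remark \ref{rdd}). A first-order Taylor expansion gives
\begin{equation*}
\frac{F(x^{*}+td)-F(x^{*})}{t} \;=\; JF(x^{*})d + o(1) \quad \text{as } t\to 0^{+}.
\end{equation*}
Since $-\mathrm{int}(K)$ is open and a cone, for all sufficiently small $t>0$ the left-hand side lies in $-\mathrm{int}(K)$, hence $F(x^{*}+td)-F(x^{*})\in-\mathrm{int}(K)$, i.e.\ $F(x^{*}+td)\prec_{K}F(x^{*})$. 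This contradicts weak efficiency of $x^{*}$.

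The only delicate point is the cone-arithmetic fact $\mathrm{int}(K)+K\subseteq\mathrm{int}(K)$ used in the first direction, and the analogous observation that $-\mathrm{int}(K)$ is preserved under multiplication by positive scalars, which is used in the second direction. Both are routine consequences of convexity and openness of the interior of a convex cone, so I do not expect any substantial obstacle; the proof is essentially a bookkeeping exercise combining the gradient inequality with these cone-topology observations.
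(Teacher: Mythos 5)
Your argument is correct and complete: both cone facts you flag ($\mathrm{int}(K)+K\subseteq\mathrm{int}(K)$ and positive-scalar invariance of $-\mathrm{int}(K)$) hold for a closed convex pointed cone with nonempty interior, and together with the gradient inequality they carry the two contrapositives through. The paper itself offers no proof of this lemma --- it is quoted from the cited reference --- so there is nothing to compare against; your write-up is the standard textbook argument for this equivalence and can stand as is.
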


\subsection{Strong convexity and smoothness}
Strong convexity and smoothness of objective functions play a central role of first-order methods in optimization. This subsection is devoted to strong convexity and smoothness of vector-valued functions under partial order.

\begin{definition}{\rm\citep{GRS2014}}\label{sc}
	The objective function $F(\cdot)$ is called strongly $K$-convex with  $\bm\mu\in K$ if
	$$F(\lambda x+(1-\lambda)y)\preceq_{K}\lambda F(x)+(1-\lambda)F(y)-\frac{1}{2}\lambda(1-\lambda)\nm{x-y}^{2}\bm\mu,\ \forall x,y\in\mathbb{R}^{n},\ \lambda\in[0,1],$$
and the above relation does not hold for any $\hat{\bm\mu}$ with $\hat{\bm\mu}\not\preceq_{K}\bm\mu$.
\end{definition}
\begin{remark}
	Comparing with the definition in \citep{GRS2014}, Definition \ref{sc} includes the case $\bm\mu\in{\rm{bd}}(K)$, then it reduces $K$-convexity when $\bm\mu=0$. Furthermore, tthe final statement in the definition establishes the uniqueness of the parameter $\bm\mu$, which is essential for the convergence analysis.
\end{remark}
\begin{lemma}{\rm\citep[Theorem 3.1]{FD2009}}
	Assume that the objective function $F(\cdot)$ is strongly $K$-convex with $\bm\mu\in {\rm int}(K)$, then $x^{*}\in\mathbb{R}^{n}$ is a $K$-stationary point of {\rm(\ref{VOP})} if and only if $x^{*}$ is an efficient solution of {\rm(\ref{VOP})}.
\end{lemma}
By the continuous differentiability of $F(\cdot)$, strong $K$-convexity of $F(\cdot)$ is equivalent to
$$\frac{1}{2}\nm{x-y}^{2}\bm\mu + JF(x)(y-x)\preceq_{K}F(y)-F(x),\ \forall x,y\in\mathbb{R}^{n},$$
it characterizes a quadratic lower-bound of $F(\cdot)$. Intuitively, we use quadratic upper-bound to define the $K$-smoothness of $F(\cdot)$ under partial order.

\begin{definition}\label{sm}
	The objective function $F(\cdot)$ is called $K$-smooth with $\bm\ell\in{\rm int}(K)$ if
	$$F(y)-F(x)\preceq_{K}JF(x)(y-x)+\frac{1}{2}\nm{x-y}^{2}\bm\ell,\ \forall x,y\in\mathbb{R}^{n},$$
	and the above relation does not hold for any $\hat{\bm\ell}$ with $\bm\ell\not\preceq_{K}\hat{\bm\ell}$.
\end{definition}

\begin{remark}
	Assume that $F(\cdot)$ is strongly $K$-convex with  $\bm\mu\in K$ and $K$-smooth with $\bm\ell\in{\rm int}(K)$, then $\bm\mu\preceq_{K} \bm\ell$.
\end{remark}

\begin{remark}
	Comparing with the smoothness and strong convexity in \citep[Definitions 7 and 8]{CTY2022b} with Euclidean distance, i.e., $\omega(\cdot)=\frac{1}{2}\nm{\cdot}^{2}$, Definitions \ref{sc} and \ref{sm} are tighter and do not depend on the reference vector $e$. 
\end{remark}
Next, we characterize the properties of the difference of two vector-valued functions.
\begin{lemma}[regularity of residual functions]\label{l2.2}
	Let $F,G:\mathbb{R}^{n}\rightarrow\mathbb{R}^{m}$ be two vector-valued functions. Define $H(\cdot):=G(\cdot)-F(\cdot)$. Then the following statements hold.
		\begin{itemize}
		\item[(i)] if $G(\cdot)$ is strongly $K$-convex with $\bm\mu\in {\rm int}(K)$ and $F(\cdot)$ is $K$-smooth with $\bm\ell\in{\rm int}(K)$, where $\bm\ell\preceq\bm\mu$, then $H(\cdot)$ is strongly $K$-convex with $\bm\mu-\bm\ell$;
		\item[(ii)] if $G(\cdot)$ is $K$-smooth with $\bm\ell\in{\rm int}(K)$ and $F(\cdot)$ is $K$-convex, then $H(\cdot)$ is $K$-smooth with $\bm\ell\in{\rm int}(K)$.
		\item[(iii)] if $G(\cdot)$ $K$-smooth with $\bm\ell\in{\rm int}(K)$ and $F(\cdot)$ is strongly $K$-convex with $\bm\mu\in {\rm int}(K)$, where $\bm\mu\preceq\bm\ell$, then $H(\cdot)$ is $K$-smooth with $\bm\ell-\bm\mu$.
	\end{itemize}
\end{lemma}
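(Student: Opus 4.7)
The plan is to rely on the first-order differentiable characterizations of strong $K$-convexity and $K$-smoothness stated just after Definitions \ref{sc} and \ref{sm}, and to combine them by linear algebra of vector inequalities. Since $K$ is a convex cone, the order $\preceq_{K}$ is preserved under addition and non-negative scalar multiplication, and satisfies $a\preceq_{K}b\Longleftrightarrow -b\preceq_{K}-a$; these three properties are all that is needed to manipulate the hypotheses termwise.

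For part (i), I would start from the lower bound coming from strong $K$-convexity of $G$,
\[
\tfrac{1}{2}\|x-y\|^{2}\bm\mu + JG(x)(y-x) \preceq_{K} G(y)-G(x),
\]
and negate the upper bound coming from $K$-smoothness of $F$ to get
\[
-JF(x)(y-x) - \tfrac{1}{2}\|x-y\|^{2}\bm\ell \preceq_{K} F(x)-F(y).
\]
Adding these two, and using $JG(x)-JF(x)=JH(x)$ together with $G-F=H$, produces
\[
\tfrac{1}{2}\|x-y\|^{2}(\bm\mu-\bm\ell) + JH(x)(y-x) \preceq_{K} H(y)-H(x),
\]
which is exactly the differentiable characterization of strong $K$-convexity of $H$ with modulus $\bm\mu-\bm\ell$; the hypothesis $\bm\ell\preceq_{K}\bm\mu$ ensures $\bm\mu-\bm\ell\in K$, so this modulus is meaningful.

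Parts (ii) and (iii) follow from the same template with different pairs of inequalities. For (ii) I would add the upper bound from $K$-smoothness of $G$ to the negation of the $K$-convexity inequality for $F$, namely $F(x)-F(y)\preceq_{K}-JF(x)(y-x)$; only the quadratic term of $G$ survives on the right, giving $K$-smoothness of $H$ with the same modulus $\bm\ell$. For (iii) I would add the upper bound from $K$-smoothness of $G$ to the negated lower bound coming from strong $K$-convexity of $F$, after which the quadratic coefficient becomes $\bm\ell-\bm\mu$, which lies in $K$ by $\bm\mu\preceq_{K}\bm\ell$.

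The routine step is this bookkeeping; the genuinely subtle step is reconciling the conclusions with the maximality/minimality clauses embedded in Definitions \ref{sc} and \ref{sm} (\emph{``the above relation does not hold for any $\hat{\bm\mu}$ with $\hat{\bm\mu}\not\preceq_{K}\bm\mu$''}), and in particular verifying that the modulus $\bm\ell-\bm\mu$ produced in (iii) actually belongs to $\mathrm{int}(K)$ rather than merely to $K$. I would either read the lemma as asserting only the displayed quadratic bound for $H$ with the stated modulus (which is the content used downstream), or, if the tight form is required, argue by contradiction: a strictly larger modulus for $H$, recombined with the inequality for $F$ (respectively $G$), would violate the maximality of $\bm\mu$ (respectively the minimality of $\bm\ell$) for the original function.
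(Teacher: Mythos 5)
Your argument is exactly the one the paper intends: the authors omit the proof with the remark that it ``is a consequence of the definition of strong $K$-convexity and $K$-smoothness,'' and your termwise addition of the first-order characterizations is precisely that omitted computation, so the proposal is correct and matches the paper's approach. The two caveats you raise --- that $\bm\ell-\bm\mu$ in part (iii) is only guaranteed to lie in $K$ rather than $\mathrm{int}(K)$, and that the maximality/minimality clauses in Definitions \ref{sc} and \ref{sm} are not inherited by $H$ under mere addition of inequalities --- are genuine imprecisions in the lemma's statement relative to the paper's strict definitions, not gaps in your proof; the downstream uses (Lemma \ref{l3.1}, Proposition \ref{p4.1}) only need the displayed quadratic bounds, so your first suggested reading is the right one.
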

\begin{proof}
	The proof is a consequence of the definition of strong $K$-convexity and $K$-smoothness, we omit it here.
\end{proof}
In SOPs, the condition number (the quotient of smoothness parameter and the modulus of strong convexity) plays a key role in the geometric convergence of first-order methods. We end this section with the definition of the condition number of a strongly $K$-convex function under partial order.
\begin{definition}
	Assume that $F(\cdot)$ is strongly $K$-convex with $\bm\mu\in{\rm int}(K)$ and $K$-smooth with $\bm\ell\in{\rm int}(K)$. Then, we denote
	\begin{equation}\label{cn}
		\kappa_{F,\preceq_{K}}:=\max_{c^{*}\in C}\frac{\dual{c^{*},\bm\ell}}{\dual{c^{*},\bm\mu}}
	\end{equation}
	 the condition number of $F(\cdot)$ under partial order $\preceq_{K}$.
\end{definition}
\begin{remark}\label{r2.5}
	Notice that $0\notin C$ and $K^{*}=\mathrm{cone}(C)$, the condition number can be rewritten as follows:
	$$\kappa_{F,\preceq_{K}}:=\max_{c^{*}\in K^{*}\setminus\{0\}}\frac{\dual{c^{*},\bm\ell}}{\dual{c^{*},\bm\mu}}.$$ In other words, the condition number of $F(\cdot)$ is determined only by $K$, not $C$.
\end{remark}
\par In the following, we will show that the condition number can be reduced to that of MOPs \citep{CTY2023b}.
\begin{proposition}\label{pcn}
	For any $\bm\ell,\bm\mu\in\mathbb{R}^{m}_{++}$, we have $$\max\limits_{\lambda\in\Delta_{m}}\frac{\sum_{i\in[m]}\lambda_{i}\bm\ell_{i}}{\sum_{i\in[m]}\lambda_{i}\bm\mu_{i}}=\max\limits_{i\in[m]}\frac{\bm\ell_{i}}{\bm\mu_{i}}.$$
\end{proposition}
\begin{proof}
	Since $\bm\ell,\bm\mu\in\mathbb{R}^{m}_{++}$, for any $i\in[m]$, we have 
	$$\bm\ell_{i}\leq \bm\mu_{i}\max\limits_{i\in[m]}\frac{\bm\ell_{i}}{\bm\mu_{i}}.$$
Multiply by $\lambda_{i}\geq0$ and sum over $i\in[m]$:
$$\sum_{i\in[m]}\lambda_{i}\bm\ell_{i}\leq\left(\sum_{i\in[m]}\lambda_{i}\bm\mu_{i}\right)\max\limits_{i\in[m]}\frac{\bm\ell_{i}}{\bm\mu_{i}}.$$
Dividing by the positive $\sum_{i\in[m]}\lambda_{i}\bm\mu_{i}$ yields
$$\frac{\sum_{i\in[m]}\lambda_{i}\bm\ell_{i}}{\sum_{i\in[m]}\lambda_{i}\bm\mu_{i}}\leq\max\limits_{i\in[m]}\frac{\bm\ell_{i}}{\bm\mu_{i}}.$$
Therefore, the relation
$$\max\limits_{\lambda\in\Delta_{m}}\frac{\sum_{i\in[m]}\lambda_{i}\bm\ell_{i}}{\sum_{i\in[m]}\lambda_{i}\bm\mu_{i}}\leq\max\limits_{i\in[m]}\frac{\bm\ell_{i}}{\bm\mu_{i}}$$
holds due to the arbitrary of $\lambda$. Let $s$ be an index where the maximum ratio is attained, i.e. $$\frac{\bm\ell_{s}}{\bm\mu_{s}}=\max\limits_{i\in[m]}\frac{\bm\ell_{i}}{\bm\mu_{i}}.$$
Take $\lambda_{s}=1$ and $\lambda_{i}=0$ for $i\neq s$, we have
$$\max\limits_{\lambda\in\Delta_{m}}\frac{\sum_{i\in[m]}\lambda_{i}\bm\ell_{i}}{\sum_{i\in[m]}\lambda_{i}\bm\mu_{i}}\geq\frac{\bm\ell_{s}}{\bm\mu_{s}}=\max\limits_{i\in[m]}\frac{\bm\ell_{i}}{\bm\mu_{i}}.$$
This completes the proof.
\end{proof}

\begin{remark}
	If $K = \mathbb{R}^{m}_{+}$, then $\kappa_{F,\preceq_{K}}=\max_{\lambda\in\Delta_{m}}{\sum_{i\in[m]}\lambda_{i}\bm\ell_{i}}/{\sum_{i\in[m]}\lambda_{i}\bm\mu_{i}}$. By Proposition \ref{pcn}, it follows that $\kappa_{F,\preceq_{K}}=\max_{i\in[m]}{\bm\ell_{i}}/{\bm\mu_{i}}$. In other words, the condition number for multiobjective optimization is the largest condition number among all objective functions.
	
\end{remark}
\begin{proposition}\label{p2.1}
	 Assume that $F(\cdot)$ is strongly $K_{1}$-convex with $\bm\mu_{1}\in{\rm int}(K_{1})$ and $K_{1}$-smooth with $\bm\ell_{1}\in{\rm int}(K_{1})$. Then, for any order cone $K_{2}$ satisfied $K_{1}\subset K_{2}$, we have $F(\cdot)$ is strongly $K_{2}$-convex with $\bm\mu_{2}\in{\rm int}(K_{2})$ and $K_{2}$-smooth with $\bm\ell_{2}\in{\rm int}(K_{2})$. Futhermore, $\bm\ell_{2}\preceq_{K_{2}}\bm\ell_{1}$, $\bm\mu_{1}\preceq_{K_{2}}\bm\mu_{2}$, and $\kappa_{F,\preceq_{K_{2}}}\leq\kappa_{F,\preceq_{K_{1}}}$.
\end{proposition}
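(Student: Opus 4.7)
The plan is to exploit the cone inclusion $K_1\subset K_2$ in three elementary ways and then invoke the maximality/minimality clauses built into Definitions \ref{sc} and \ref{sm}. As preparation I would record that $K_1\subset K_2$ yields (i) $a\preceq_{K_1}b\Rightarrow a\preceq_{K_2}b$, since $b-a\in K_1\subset K_2$; (ii) $\mathrm{int}(K_1)\subset\mathrm{int}(K_2)$, since any open ball around a point of $\mathrm{int}(K_1)$ already lies inside $K_2$; and (iii) the dual reversal $K_2^{*}\subset K_1^{*}$. In particular $\bm\mu_1,\bm\ell_1\in\mathrm{int}(K_2)$ are legitimate candidates for the $K_2$-moduli.

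Starting from strong $K_1$-convexity and $K_1$-smoothness of $F$ with parameters $\bm\mu_1$ and $\bm\ell_1$, I would apply (i) to promote both inequalities to the $\preceq_{K_2}$ order while keeping the same right-hand side vectors. This shows that $\bm\mu_1$ and $\bm\ell_1$ are admissible $K_2$-moduli for strong convexity and smoothness, respectively. The maximality clause in Definition \ref{sc} then yields a $\bm\mu_2$ for which strong $K_2$-convexity holds with $\bm\mu_1\preceq_{K_2}\bm\mu_2$, and the inclusion $\bm\mu_2\in\mathrm{int}(K_2)$ is automatic because $\bm\mu_2-\bm\mu_1\in K_2$ and $\bm\mu_1\in\mathrm{int}(K_2)$. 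Symmetrically, the minimality clause in Definition \ref{sm} provides $\bm\ell_2\in\mathrm{int}(K_2)$ with $\bm\ell_2\preceq_{K_2}\bm\ell_1$. The only delicate ingredient, and what I expect to be the main technical obstacle, is the tacit existence of such extremal $\bm\mu_2$ and $\bm\ell_2$; this follows from a standard closedness/boundedness argument, as the set of admissible $\bm\mu$'s is a closed subset of $K_2$ bounded above in $\preceq_{K_2}$ by $\bm\ell_1$, and dually for $\bm\ell$'s.

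For the condition-number comparison I would use the base-free expression $\kappa_{F,\preceq_{K}}=\max_{c^{*}\in K^{*}\setminus\{0\}}\langle c^{*},\bm\ell\rangle/\langle c^{*},\bm\mu\rangle$ from the remark following \eqref{cn}. Given any $c^{*}\in K_2^{*}\setminus\{0\}$, (iii) places it in $K_1^{*}\setminus\{0\}$, and both denominators $\langle c^{*},\bm\mu_1\rangle$ and $\langle c^{*},\bm\mu_2\rangle$ are strictly positive because $\bm\mu_1\in\mathrm{int}(K_1)$ and $\bm\mu_2\in\mathrm{int}(K_2)$. Pairing $c^{*}$ against $\bm\ell_1-\bm\ell_2\in K_2$ and $\bm\mu_2-\bm\mu_1\in K_2$ gives
\[
\frac{\langle c^{*},\bm\ell_2\rangle}{\langle c^{*},\bm\mu_2\rangle}\le\frac{\langle c^{*},\bm\ell_1\rangle}{\langle c^{*},\bm\mu_1\rangle}\le\kappa_{F,\preceq_{K_1}},
\]
and taking the supremum over $c^{*}\in K_2^{*}\setminus\{0\}$ delivers $\kappa_{F,\preceq_{K_2}}\le\kappa_{F,\preceq_{K_1}}$, which closes the argument.
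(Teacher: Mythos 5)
Your argument is correct and is essentially what the paper intends: the authors omit the proof, stating only that it is "a consequence of the definitions," and your three observations (order promotion under $K_1\subset K_2$, inclusion of interiors, dual reversal $K_2^{*}\subset K_1^{*}$) together with the base-free form of $\kappa_{F,\preceq_{K}}$ supply exactly that consequence; the chain $\langle c^{*},\bm\ell_2\rangle/\langle c^{*},\bm\mu_2\rangle\le\langle c^{*},\bm\ell_1\rangle/\langle c^{*},\bm\mu_1\rangle$ for $c^{*}\in K_2^{*}\setminus\{0\}$ is the right way to compare the condition numbers. The one step I would not let pass as written is your existence argument for the extremal moduli $\bm\mu_2,\bm\ell_2$: a closed set that is order-bounded in $\preceq_{K_2}$ need not possess a greatest element under a cone-induced partial order (maximal elements, yes; a greatest one, no), so "closedness/boundedness" alone does not deliver the $\bm\mu_2$ demanded by the maximality clause of Definition \ref{sc} — though in fairness this gap is inherited from the paper's Definitions \ref{sc} and \ref{sm}, which already presuppose that such greatest admissible moduli exist.
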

\begin{proof}
	The proof is a consequence of the definitions of strong $K$-convexity, $K$-smoothness and condition number, we omit it here.
\end{proof}
\begin{remark}
	Proposition \ref{p2.1} shows that, for a fixed vector-valued function, enlarging the underlying order cone effectively simplifies the associated vector optimization problem; see Lemma \ref{l4.2} for a formal statement. This observation motivates the use of a larger order cone to accelerate first-order methods for VOPs.
\end{remark}
\section{Majorization-minimization with first-order surrogate functions for VOPs}\label{sec3}
\subsection{Majorization-minimization descent method for VOPs}
In this section, we present a unified majorization-minimization scheme for minimizing a vector-valued function in the sense of descent. 

\begin{algorithm}
	\caption{Unified majorization-minimization scheme for VOPs }\label{mm}
	\LinesNumbered 
	\KwData{$x^{0}\in\mathbb{R}^{n}$}
	\For{$k=0,1,...$}{  Choose a strongly $K$-convex surrogate function $G_{k}(\cdot)$ of $F(\cdot)-F(x^{k})$ near $x^{k}$\\
		Choose a base $C_{k}$ of dual cone $K^*$\\
		Update $x^{k+1}:=\mathop{\arg\min}_{x\in\mathbb{R}^{n}}\max_{c^{*}\in C_{k}}\dual{c^{*},G_{k}(x)}$\\
	\If{$x^{k+1}=x^{k}$}{ {\bf{return}} $K$-stationary point $x^{k}$ }}  
\end{algorithm}

\begin{remark}
	It is worth noting that we choose a variable base $C_k$ in each iteration, whreas an invariable base $C$ is used in existing descent methods for VOPs, see \citep{GS2005,GRS2014,GI2004,BI2005,LP2018,CYZ2023}. 
\end{remark}

The surrogate function $G_{k}(\cdot)$ plays a central role in the generic majorization-minimization scheme. Intuitively, $G_{k}(\cdot)$ should well approximate $F(\cdot)-F(x^{k})$ near $x^{k}$ and the related subproblem should be easy to minimize. Therefore, we measure the approximation error by $H_{k}(\cdot):=G_{k}(\cdot) - F(\cdot)+F(x^{k})$. To characterize surrogates, we introduce a class of surrogate functions, which will be used to establish the convergence results of Algorithm \ref{mm}. 
\begin{definition}
	For $x^{k}\in\mathbb{R}^{n}$, we call $G_{k}(\cdot)$ a first-order surrogate function of $F(\cdot)-F(x^{k})$ near $x^{k}$ when
	\begin{itemize}
		\item[(i)] $F(x^{k+1})-F(x^{k})\preceq_{K}G_{k}(x^{k+1})$, where $x^{k+1}$ is the minimizer of $\min_{x\in\mathbb{R}^{n}}\max_{c^{*}\in C_{k}}\dual{c^{*},G_{k}(x)}$, furthermore, when $F(\cdot)-F(x^{k})\preceq_{K}G_{k}(\cdot)$ for all $x\in\mathbb{R}^{n}$, we call $G_{k}(\cdot)$ a majorizing surrogate;
		\item[(ii)] the approximation error $H_{k}(\cdot)$ is $K$-smooth with $\bm \ell\in{\rm int}(K)$, $H_{k}(x^{k})=0$, and $JH_{k}(x^{k})=0$.
	\end{itemize}
	We denote by $\mathcal{S}_{\bm \ell,\bm\mu}(F,x^{k})$ the set of first-order strongly $K$-convex surrogate functions with $\bm\mu\in{\rm int}(K)$.
\end{definition}
Next, we characterize the properties of first-order surrogate functions.
\begin{lemma}\label{l3.1}
	Let $G_{k}(\cdot)\in\mathcal{S}_{\bm \ell,\bm\mu}(F,x^{k})$ and $x^{k+1}$ be the minimizer of $\min_{x\in\mathbb{R}^{n}}\max_{c^{*}\in C_{k}}\dual{c^{*},G_{k}(x)}$. Then, for all $x\in\mathbb{R}^{n}$, we have
	\begin{itemize}
		\item[(i)] $H_{k}(x)\preceq_{K}\frac{1}{2}\nm{x-x^{k}}^{2}\bm\ell$;
		\item[(ii)] $\dual{c^{*}_{k},F(x^{k+1})}+\frac{1}{2}\nm{x^{k+1}-x}^{2}\dual{c^{*}_{k},\bm\mu}\leq\dual{c^{*}_{k},F(x)}+\frac{1}{2}\nm{x^{k}-x}^{2}\dual{c^{*}_{k},\bm\ell}$, where $c^{*}_{k}$ is a maximizer of $\max_{c^{*}\in C_{k}}\min_{x\in\mathbb{R}^{n}}\dual{c^{*},G_{k}(x)}$.
	\end{itemize}
\end{lemma}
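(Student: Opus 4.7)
The plan is to unpack the two defining properties of a first-order surrogate and combine them with the standard saddle-point/strong-convexity machinery. Part (i) is essentially a one-line consequence of the $K$-smoothness of $H_{k}$ at the anchor point, and I will recycle it inside the proof of (ii) after scalarizing by the dual variable $c^{*}_{k}$.

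For part (i), I would apply the defining $K$-smoothness inequality for $H_{k}$ with parameter $\bm\ell$ between the anchor $x^{k}$ and an arbitrary $x$, giving $H_{k}(x)-H_{k}(x^{k})\preceq_{K}JH_{k}(x^{k})(x-x^{k})+\tfrac{1}{2}\nm{x-x^{k}}^{2}\bm\ell$. The two vanishing conditions $H_{k}(x^{k})=0$ and $JH_{k}(x^{k})=0$ built into the first-order surrogate definition annihilate both the constant and the linear term, leaving exactly $H_{k}(x)\preceq_{K}\tfrac{1}{2}\nm{x-x^{k}}^{2}\bm\ell$.

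For part (ii), I would first record the saddle-point structure: since $G_{k}$ is strongly $K$-convex with $\bm\mu\in\mathrm{int}(K)$ and $C$ is convex and compact, the function $(x,c^{*})\mapsto\dual{c^{*},G_{k}(x)}$ is strongly convex in $x$ for every $c^{*}\in C$ and affine in $c^{*}$, so Sion's minimax theorem produces a saddle point $(x^{k+1},c^{*}_{k})$ at which $x^{k+1}$ globally minimizes the scalar strongly convex map $\dual{c^{*}_{k},G_{k}(\cdot)}$ with modulus $\dual{c^{*}_{k},\bm\mu}$. Writing the standard strong-convexity inequality at the minimizer and substituting $G_{k}(\cdot)=F(\cdot)-F(x^{k})+H_{k}(\cdot)$ on both sides cancels the $\dual{c^{*}_{k},F(x^{k})}$ contributions. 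Two clean-up steps then produce exactly the claimed three-point inequality: the descent clause $F(x^{k+1})-F(x^{k})\preceq_{K}G_{k}(x^{k+1})$ in the surrogate definition forces $H_{k}(x^{k+1})\succeq_{K}0$, so $\dual{c^{*}_{k},H_{k}(x^{k+1})}\geq 0$ and dropping this term only weakens the left-hand side; on the right-hand side, part (i) scalarized by $c^{*}_{k}\in K^{*}$ gives $\dual{c^{*}_{k},H_{k}(x)}\leq\tfrac{1}{2}\nm{x-x^{k}}^{2}\dual{c^{*}_{k},\bm\ell}$.

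The main obstacle is the sign bookkeeping when moving between the partial order and its scalarization by $c^{*}_{k}$, since the two uses of $H_{k}$ point in opposite directions: $H_{k}(x^{k+1})\succeq_{K}0$ comes from the majorization-at-the-minimizer clause of the surrogate definition, whereas $H_{k}(x)$ is upper-bounded by part (i), and both must be transferred through the pairing with $c^{*}_{k}\in K^{*}$ with the correct orientation. For the strong-convexity step to be meaningful one also has to note $\dual{c^{*}_{k},\bm\mu}>0$, which follows from $\bm\mu\in\mathrm{int}(K)$ together with $c^{*}_{k}\in K^{*}\setminus\{0\}$.
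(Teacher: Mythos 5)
Your proposal is correct and follows essentially the same route as the paper: part (i) by applying the $K$-smoothness of $H_{k}$ at the anchor $x^{k}$ with the vanishing conditions, and part (ii) by invoking Sion's minimax theorem to get the saddle point, writing the strong-convexity inequality for $\dual{c^{*}_{k},G_{k}(\cdot)}$ at its minimizer $x^{k+1}$, then substituting $G_{k}=F-F(x^{k})+H_{k}$ and controlling $H_{k}(x^{k+1})\succeq_{K}0$ on the left and $H_{k}(x)$ via part (i) on the right. The paper phrases the left-hand bound directly as $F(x^{k+1})-F(x^{k})\preceq_{K}G_{k}(x^{k+1})$ and uses the optimality condition $JG_{k}(x^{k+1})^{T}c^{*}_{k}=0$ explicitly, but these are the same steps you describe.
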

\begin{proof}
	Assertion (i) directly follows by the $K$-smoothness of $H_{k}(\cdot)$ and the facts that $H_{k}(x^{k})=0$ and $JH_{k}(x^{k})=0$. Next, we prove the assertion (ii). By Sion's minimax theorem \citep{S1958}, by denoting
	$c^{*}_{k}$ a maximizer of $\max_{c^{*}\in C_{k}}\min_{x\in\mathbb{R}^{n}}\dual{c^{*},G_{k}(x)}$, and $x^{k+1}$ the minimizer of $\min_{x\in\mathbb{R}^{n}}\max_{c^{*}\in C_{k}}\dual{c^{*},G_{k}(x)}$, we have $JG_{k}(x^{k+1})^{T}c^{*}_{k}=0$. This, together with the strong $K$-convexity of $G_{k}(\cdot)$, implies that
	$$\dual{c^{*}_{k},G_{k}(x^{k+1})}+\frac{1}{2}\nm{x^{k+1}-x}^{2}\dual{c^{*}_{k},\bm \mu}\leq\dual{c^{*}_{k},G_{k}(x)},~\forall x\in\mathbb{R}^{n}.$$
	We thus use $F(x^{k+1})-F(x^{k})\preceq_{K}G_{k}(x^{k+1})$ to get
	\begin{align*}
		\dual{c^{*}_{k},F(x^{k+1})-F(x^{k})}+\frac{1}{2}\nm{x^{k+1}-x}^{2}\dual{c^{*}_{k},\bm\mu}&\leq\dual{c^{*}_{k},G_{k}(x^{k+1})}+\frac{1}{2}\nm{x^{k+1}-x}^{2}\dual{c^{*}_{k},\bm \mu}\\
		&\leq\dual{c^{*}_{k},G_{k}(x)}\\
		&=\dual{c^{*}_{k},F(x)-F(x^{k})} +\dual{c^{*}_{k},H_{k}(x)}\\
		&\leq\dual{c^{*}_{k},F(x)-F(x^{k})}+\frac{1}{2}\nm{x^{k}-x}^{2}\dual{c^{*}_{k},\bm\ell},
	\end{align*}
where the equality follows by the definition of $H_{k}(\cdot)$ and the last inequality is due to the assertion (i). This completes the proof.
\end{proof}
\subsection{Convergence analysis}
In Algorithm \ref{mm}, it can be observed that it terminates either with a $K$-stationary point in a finite number of iterations or generates an infinite sequence of non-stationary points. In the subsequent analysis, we will assume that the algorithm produces an infinite sequence of non-stationary points. 
\subsubsection{Global convergence}
Firstly, we establish the global convergence result in the nonconvex setting, the following assumptions are required.
\begin{assumption}\label{a1}
	Assume the following statements hold for $F(\cdot)$ and $G_{k}(\cdot)$:
	\begin{itemize}
		\item[(i)] The level set $\mathcal{L}_{F}(x^{0}):=\{x:F(x)\preceq_{K}F(x^{0})\}$ is bounded;
		\item[(ii)] There exist two compact bases $C_{L}$ and $\tilde{C}$ of $K^*$ such that
		\begin{center}
			$C_{k}\subset \tilde{C}$, and $\max_{c^*\in C_{k}}\dual{c^*,y}\geq\max_{c^*\in C_{L}}\dual{c^*,y}$
		\end{center}
		hold for all $k\geq0$ and $y\in-K$. 
		\item[(iii)] If $C$ is a compact base of $K^*$, $x^{k}\rightarrow x^{*}$, $G_{k}(\cdot)\in\mathcal{S}_{\bm \ell,\bm\mu}(F,x^{k})$ and $\min_{y\in\mathbb{R}^n}\max_{c^{*}\in C}\dual{c^{*},G_{k}(y)}\rightarrow0$, then $x^*$ is a $K$-stationary point to  (\ref{VOP}).
	\end{itemize}
\begin{remark}
	Assumption \ref{a1}(i) is a standard condition for nonconvex cases. Moreover, Assumption \ref{a1}(ii) requires the sequence $\{C_{k}\}$ to be uniformly bounded, which is a mild assumption. Assumption \ref{a1}(iii) holds for  $K$-steepest descent method \citep{GS2005}. Specifically, $\min_{y\in\mathbb{R}^n}\max_{c^{*}\in C}\dual{c^{*},G_{k}(y)}\rightarrow0$ takes the form of $\alpha_{x_k}$ \citep[Definition 3.2]{GS2005}, where $\alpha_{x}$ is continuous \citep[Lemma 3.3(3)]{GS2005} and $\alpha_{x}=0$ if and only if $x$ is a $K$-stationary point to (\ref{VOP}) \citep[Lemma 3.3(1)]{GS2005}.  
	
\end{remark}
\end{assumption}
We are now in a position to establish the global convergence of Algorithm \ref{mm}.
\begin{theorem}\label{t1}
	Suppose that Assumption \ref{a1} holds, let $\{x^{k}\}$ be the sequence generated by Algorithm \ref{mm} with $G_{k}(\cdot)\in\mathcal{S}_{\bm \ell,\bm\mu}(F,x^{k})$. Then, $\{x^{k}\}$ has at least one accumulation point and every accumulation point is a non-stationary point to (\ref{VOP}).
\end{theorem}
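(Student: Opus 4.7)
The plan is to split the argument into two parts: (i) show that the whole iteration sequence lies in the bounded level set $\mathcal{L}_{F}(x^{0})$, so that accumulation points exist; (ii) show that along any convergent subsequence one may invoke Assumption \ref{a1}(ii) to conclude $K$-stationarity. The crux is verifying the asymptotic condition $\max_{c^{*}\in C}\dual{c^{*},G_{k}(x^{k+1})}\rightarrow 0$ required by that assumption (since the theorem's ``non-stationary'' must read ``$K$-stationary'' for the statement to be consistent with the proof machinery just developed).

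For (i), I would first note that $H_{k}(x^{k})=0$ together with $G_{k}=F-F(x^{k})+H_{k}$ forces $G_{k}(x^{k})=0$, so $\max_{c^{*}\in C}\dual{c^{*},G_{k}(x^{k})}=0$. Since $x^{k+1}$ is the minimizer of $\max_{c^{*}\in C}\dual{c^{*},G_{k}(\cdot)}$, we get $\dual{c^{*},G_{k}(x^{k+1})}\leq 0$ for every $c^{*}\in C$, hence for every $c^{*}\in K^{*}=\mathrm{cone}(C)$ by positive homogeneity, giving $G_{k}(x^{k+1})\in -K^{**}=-K$. Combined with the majorizing inequality $F(x^{k+1})-F(x^{k})\preceq_{K}G_{k}(x^{k+1})$, this yields $F(x^{k+1})\preceq_{K}F(x^{k})$, and inductively $x^{k+1}\in\mathcal{L}_{F}(x^{0})$. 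Assumption \ref{a1}(i) then guarantees the existence of accumulation points.

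For (ii), setting $x=x^{k}$ in Lemma \ref{l3.1}(ii) gives $\dual{c^{*}_{k},F(x^{k})-F(x^{k+1})}\geq \tfrac{1}{2}\|x^{k+1}-x^{k}\|^{2}\dual{c^{*}_{k},\bm\mu}\geq 0$. To control the left-hand side I would fix any $\tilde c\in\mathrm{int}(K^{*})$: by continuity of $F$ on the compact set $\mathcal{L}_{F}(x^{0})$ the scalar sequence $\dual{\tilde c,F(x^{k})}$ is bounded below, and by the descent property established in (i) it is nonincreasing, hence convergent; therefore $\dual{\tilde c,F(x^{k})-F(x^{k+1})}\rightarrow 0$. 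Since $F(x^{k})-F(x^{k+1})\in K$ is bounded and $\tilde c$ is strictly positive on $K\setminus\{0\}$, pointedness of $K$ together with a routine compactness argument forces $F(x^{k})-F(x^{k+1})\rightarrow 0$. Combining this with the saddle-point identity $\max_{c^{*}\in C}\dual{c^{*},G_{k}(x^{k+1})}=\dual{c^{*}_{k},G_{k}(x^{k+1})}$, the inequality $\dual{c^{*}_{k},F(x^{k+1})-F(x^{k})}\leq \dual{c^{*}_{k},G_{k}(x^{k+1})}\leq 0$, and compactness of $C$ (which uniformly bounds $\|c^{*}_{k}\|$), I conclude that $\max_{c^{*}\in C}\dual{c^{*},G_{k}(x^{k+1})}\rightarrow 0$ along the full sequence.

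With these ingredients in place, extracting any convergent subsequence $x^{k_{i}}\rightarrow x^{*}$ from (i) and invoking Assumption \ref{a1}(ii) along this subsequence yields the $K$-stationarity of $x^{*}$. The main obstacle I anticipate is the scalarization step in (ii): one must use $\tilde c\in\mathrm{int}(K^{*})$ rather than an arbitrary $c^{*}\in C$, since only interior multipliers guarantee that a vanishing scalar inner product with a bounded $K$-valued sequence implies vanishing of the sequence itself. This is precisely the point at which pointedness of $K$ enters the argument, and getting this step stated cleanly is the only nontrivial piece beyond direct bookkeeping with Lemma \ref{l3.1}.
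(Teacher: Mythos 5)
Your proof is correct, and its skeleton coincides with the paper's: establish the $K$-descent property $F(x^{k+1})\preceq_{K}F(x^{k})$ from $G_{k}(x^{k})=0$ and the minimality of $x^{k+1}$, use Assumption \ref{a1}(i) to get accumulation points, verify $\max_{c^{*}\in C}\dual{c^{*},G_{k}(x^{k+1})}\rightarrow0$, and invoke Assumption \ref{a1}(ii) along a convergent subsequence (and yes, the ``non-stationary'' in the statement is a typo for ``$K$-stationary''). Where you genuinely diverge is in the vanishing step. The paper telescopes the vector inequality $F(x^{k+1})-F(x^{k})\preceq_{K}G_{k}(x^{k+1})$, bounds the partial sums below by $\max_{c^{*}\in C}\dual{c^{*},F^{*}-F(x^{0})}$, and concludes that a convergent series of nonpositive terms has terms tending to zero --- a three-line argument that never needs $F(x^{k})-F(x^{k+1})\rightarrow0$ itself. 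You instead fix a strict multiplier $\tilde c\in\mathrm{int}(K^{*})$ (which exists precisely because $K$ is pointed), use monotone convergence of the scalar sequence $\dual{\tilde c,F(x^{k})}$ plus compactness to force $F(x^{k})-F(x^{k+1})\rightarrow0$, and then squeeze $\max_{c^{*}\in C}\dual{c^{*},G_{k}(x^{k+1})}$ between $\max_{c^{*}\in C}\dual{c^{*},F(x^{k+1})-F(x^{k})}$ and $0$. Your route is longer and leans on the pointedness/interior-multiplier machinery, but it buys a strictly stronger intermediate conclusion ($F(x^{k})-F(x^{k+1})\rightarrow0$, not merely summability of the surrogate values), which can be reused elsewhere; the paper's summation trick is more economical and sidesteps the need for any $\tilde c\in\mathrm{int}(K^{*})$. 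Your appeal to the saddle-point identity $\max_{c^{*}\in C}\dual{c^{*},G_{k}(x^{k+1})}=\dual{c^{*}_{k},G_{k}(x^{k+1})}$ is harmless but unnecessary, since taking the maximum over the compact set $C$ on both sides of the majorization inequality already closes the squeeze.
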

\begin{proof}
	Since $G_{k}(\cdot)\in\mathcal{S}_{\bm \ell,\bm\mu}(F,x^{k})$, we have 
	\begin{equation}\label{e1}
		F(x^{k+1})-F(x^{k})\preceq_{K} G_{k}(x^{k+1}),
	\end{equation}
	and 
	$$G_{k}(x^{k+1})\preceq_{K}G_{k}(x^{k})=H_{k}(x^{k})=0.$$ 
	Then, we conclude that $\{F(x^{k})\}$ is decreasing under partial order $\preceq_{K}$. It follows by Assumption \ref{a1}(i) and continuity of $F(\cdot)$ that $\{x^{k}\}$ is bounded and there exists $F^{*}$ such that
	$$F^{*}\preceq_{K}F(x^{k}),~\forall k\geq0.$$
	The boundedness of $\{x^{k}\}$ indicates that $\{x^{k}\}$ has at least one accumulation point. Next, we prove that any accumulation point $x^{*}$ is a non-stationary point.
	By summing (\ref{e1}) from $0$ to infinity, we have
	$$F^{*}-F(x^{0})\preceq_{K}\sum_{k=0}^{\infty}(F(x^{k+1})-F(x^{k}))\preceq_{K}\sum_{k=0}^{\infty}G_{k}(x^{k+1}).$$
	It follows that
	\begin{align*}
		\sum_{k=0}^{\infty}\max_{c^{*}\in {C}_k}\dual{c^{*},G_{k}(x^{k+1})}&\geq\sum_{k=0}^{\infty}\max_{c^{*}\in {C}_L}\dual{c^{*},G_{k}(x^{k+1})}\\
		&\geq\max_{c^{*}\in C_{L}}\dual{c^{*},\sum_{k=0}^{\infty}G_{k}(x^{k+1})}\\
		&\geq\max_{c^{*}\in C_L}\dual{c^{*},F^{*}-F(x^{0})}\\
		&\geq -\infty,
	\end{align*}
	where the first inequality follows by Assumption \ref{a1}(ii) and $G_{k}(x^{k+1})\preceq_{K} 0$, the second inequality is due to the fact $\max_{x}{f_1(x)}+\max_{x}{f_2(x)}\geq \max_{x}\{f_1(x)+f_2(x)\}$.
	This, together with the fact that $G_{k}(x^{k+1})\preceq_{K}G_{k}(x^{k})=0$, implies $\max_{c^{*}\in C_{k}}\dual{c^{*},G_{k}(x^{k+1})}\rightarrow0$. A direct calculation gives:
	\begin{align*}
		0 = \max_{c^{*}\in \tilde{C}}\dual{c^{*},G_{k}(x^{k})} \geq\min_{y\in\mathbb{R}^n}\max_{c^{*}\in \tilde{C}}\dual{c^{*},G_{k}(y)} \geq \min_{y\in\mathbb{R}^n}\max_{c^{*}\in C_k}\dual{c^{*},G_{k}(y)} = \max_{c^{*}\in C_{k}}\dual{c^{*},G_{k}(x^{k+1})}\rightarrow0,
	\end{align*}
where the second inequality follows by $C_{k}\subset \tilde{C}$.
Therefore, $\min_{y\in\mathbb{R}^n}\max_{c^{*}\in \tilde{C}}\dual{c^{*},G_{k}(y)}\rightarrow0$. For the accumulation point $x^{*}$, there exists an infinite index set $\mathcal{K}$ such that $x^{k}\stackrel{\mathcal{K}}{\longrightarrow}x^{*}$. By Assumption \ref{a1}(iii), we conclude that $x^{*}$ is a $K$-stationary point.	
\end{proof}
\subsubsection{Strong convergence}
In the following, we establish the strong convergence result of Algorithm \ref{a1} in $K$-convex setting.
\begin{theorem}\label{t2}
	Suppose that Assumption \ref{a1} holds and $F(\cdot)$ is $K$-convex, let $\{x^{k}\}$ be the sequence generated by Algorithm \ref{mm} with $G_{k}(\cdot)\in\mathcal{S}_{\bm \ell,\bm\mu}(F,x^{k})$ and $0\preceq_{K}\bm\ell=\bm\mu$. Then, the following statements hold:
	\begin{itemize}
		\item[(i)] $\{x^{k}\}$ converges to a weakly efficient solution $x^{*}$ of (\ref{VOP});
		\item[(ii)] $u_{0}(x^{k})\leq\frac{\ell_{\max}R^{2}}{2k},~\forall k\geq1$, where $\ell_{\max}:=\max_{c^{*}\in \tilde{C}}\dual{c^{*},\bm\ell}$, $R:=\{\nm{x-y}:x,y\in\mathcal{L}_{F}(x^{0})\}$, and 
	{$$u_{0}(x^{k}):=\max_{x\in\mathbb{R}^{n}}\min_{c^{*}\in \tilde{C}}\dual{c^{*},F(x^{k})-F(x)}$$}
		is a merit function in the sense of weak efficiency.
	\end{itemize}
\end{theorem}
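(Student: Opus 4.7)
The plan is to establish (i) by a Fejér-monotonicity argument anchored at an arbitrary accumulation point, and (ii) by applying Lemma~\ref{l3.1}(ii) at a maximizer of $u_0(x^k)$ and telescoping the resulting bound. Both parts hinge on picking a reference point $\bar x$ that is dominated by the iterates in the $\preceq_K$ sense.

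For (i), Theorem~\ref{t1} already produces an accumulation point $\bar x$, and under $K$-convexity of $F(\cdot)$ the earlier equivalence identifies $K$-stationarity with weak efficiency, so $\bar x$ is weakly efficient. The central step is to verify $F(\bar x)\preceq_K F(x^k)$ for every $k$: the proof of Theorem~\ref{t1} shows $\{F(x^k)\}$ is $\preceq_K$-non-increasing, so for any $k$ and any $k_i\geq k$ along the subsequence $x^{k_i}\to\bar x$ one has $F(x^{k_i})\preceq_K F(x^k)$, and closedness of $K$ lets the limit pass. Substituting $x=\bar x$ in Lemma~\ref{l3.1}(ii), the term $\langle c_k^*,F(x^{k+1})-F(\bar x)\rangle$ is non-negative (since $F(\bar x)\preceq_K F(x^{k+1})$ and $c_k^*\in C\subset K^*$); combined with $\bm\ell\preceq_K\bm\mu$ this rearranges to $\|x^{k+1}-\bar x\|^2\leq\frac{\langle c_k^*,\bm\ell\rangle}{\langle c_k^*,\bm\mu\rangle}\|x^k-\bar x\|^2\leq\|x^k-\bar x\|^2$. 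Hence $\|x^k-\bar x\|$ is monotone decreasing and convergent, and because $\|x^{k_i}-\bar x\|\to 0$ the limit equals $0$, so $x^k\to\bar x$.

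For (ii), fix $k$ and pick a maximizer $\bar x$ of $u_0(x^k)$. Any $x$ outside $\mathcal{L}_F(x^k)$ satisfies $F(x^k)-F(x)\notin K$, so by the bipolar characterization of $K$ through its base $C$ we get $\min_{c^*\in C}\langle c^*,F(x^k)-F(x)\rangle<0$; since $x=x^k$ already delivers $0$, the maximum is attained on the compact set $\mathcal{L}_F(x^k)\subseteq\mathcal{L}_F(x^0)$, and the maximizer satisfies $F(\bar x)\preceq_K F(x^k)$, hence $F(\bar x)\preceq_K F(x^j)$ for every $j\leq k$ by $\preceq_K$-monotonicity of $\{F(x^j)\}$. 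For each such $j$, the chain $u_0(x^k)\leq\langle c_{j-1}^*,F(x^k)-F(\bar x)\rangle\leq\langle c_{j-1}^*,F(x^j)-F(\bar x)\rangle$ follows from the definition of $u_0$ and from $F(x^k)\preceq_K F(x^j)$, respectively. Applying Lemma~\ref{l3.1}(ii) at iteration $j-1$ with $x=\bar x$, and using $\bm\ell\preceq_K\bm\mu$ together with $F(\bar x)\preceq_K F(x^j)$, simultaneously delivers the Fejér estimate $\|x^j-\bar x\|\leq\|x^{j-1}-\bar x\|$ and the bound $u_0(x^k)\leq\frac{\ell_{\max}}{2}(\|x^{j-1}-\bar x\|^2-\|x^j-\bar x\|^2)$. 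Summing over $j=1,\ldots,k$ telescopes to $k\,u_0(x^k)\leq\frac{\ell_{\max}}{2}\|x^0-\bar x\|^2\leq\frac{\ell_{\max}R^2}{2}$, which is the claimed rate.

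The main obstacle is pinpointing the reference vector $\bar x$ and certifying that $F(\bar x)\preceq_K F(x^k)$: in (i) this requires using closedness of $K$ to pass a cone inclusion through a subsequential limit, while in (ii) it requires the bipolar characterization of $K$ via its base $C$ to confirm that every maximizer of $u_0(x^k)$ lies in $\mathcal{L}_F(x^k)$. Once these dominations are in place, the Fejér estimate and the telescoping sum are routine consequences of Lemma~\ref{l3.1}(ii) and the assumption $\bm\ell\preceq_K\bm\mu$.
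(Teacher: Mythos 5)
Your proposal is correct and follows essentially the same route as the paper: accumulation point plus Fej\'er monotonicity anchored at $x^*$ (using $F(x^*)\preceq_K F(x^k)$) for part (i), and the telescoped form of Lemma~\ref{l3.1}(ii) at a maximizer of $u_0(x^k)$ lying in $\mathcal{L}_F(x^0)$ for part (ii). The only cosmetic difference is that you sum the per-iteration bounds $u_0(x^k)\leq\langle c^*_{j-1},F(x^k)-F(\bar x)\rangle$ directly, whereas the paper averages the multipliers into $\hat c^*_k=\frac{1}{k}\sum_s c^*_s\in C$ before invoking the definition of $u_0$ --- algebraically the same step; you are in fact slightly more careful than the paper in noting that the $\ell_{\max}$ bound requires the Fej\'er inequality at the chosen reference point.
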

\begin{proof}
	(i) By the similar arguments in the proof of Theorem \ref{t1}, we conclude that $\{x^{k}\}$ is bounded, and there exists a $K$-stationary point $x^{*}$ such that $F(x^{*})\preceq_{K}F(x^{k})$. Besides, the $K$-convexity of $F(\cdot)$ indicates that $x^{*}$ is a weakly efficient point. From Lemma \ref{l3.1}(ii), for any $x\in\mathbb{R}^{n}$ we have
	\begin{equation}\label{e3}
		\dual{c^{*}_{k},F(x^{k+1})-F(x)}\leq\frac{1}{2}\nm{x^{k}-x}^{2}\dual{c^{*}_{k},\bm\ell}-\frac{1}{2}\nm{x^{k+1}-x}^{2}\dual{c^{*}_{k},\bm\mu}.
	\end{equation} 
Substituting $x=x^{*}$ into the above inequality, we obtain
$$\dual{c^{*}_{k},F(x^{k+1})-F(x^{*})}\leq\frac{1}{2}\nm{x^{k}-x^{*}}^{2}\dual{c^{*}_{k},\bm\ell}-\frac{1}{2}\nm{x^{k+1}-x^{*}}^{2}\dual{c^{*}_{k},\bm\mu}.$$
Recall that $F(x^{*})\preceq_{K}F(x^{k})$, it follows that
$$\nm{x^{k+1}-x^{*}}^{2}\dual{c^{*}_{k},\bm\mu}\leq\nm{x^{k}-x^{*}}^{2}\dual{c^{*}_{k},\bm\ell}.$$
Furthermore, we use the fact that $0\preceq_{K}\bm\ell=\bm\mu$ to get
$$\nm{x^{k+1}-x^{*}}^{2}\leq\nm{x^{k}-x^{*}}^{2}.$$
Therefore, the sequence $\{\nm{x^{k}-x^{*}}\}$ converges. This, together with the fact that $x^{*}$ is an accumulation point of $\{x^{k}\}$, implies that $\{x^{k}\}$ converges to $x^{*}$
\par(ii) Since $0\preceq_{K}\bm\ell=\bm\mu$, we use inequality (\ref{e3}) to obtain 
\begin{equation}\label{Eq6}
	\dual{c^{*}_{k},F(x^{k+1})-F(x)}\leq\frac{1}{2}\nm{x^{k}-x}^{2}\dual{c^{*}_{k},\bm\ell}-\frac{1}{2}\nm{x^{k+1}-x}^{2}\dual{c^{*}_{k},\bm\mu}=\frac{\dual{c^{*}_{k},\bm\ell}}{2}\left(\nm{x^{k}-x}^{2}-\nm{x^{k+1}-x}^{2}\right).
\end{equation} Taking the sum of the  preceding inequality over $0$ to $k-1$, we have
$$\sum\limits_{s=0}^{k-1}\dual{c^{*}_{s},F(x^{s+1})-F(x)}\leq\sum\limits_{s=0}^{k-1}\frac{\dual{c^{*}_{s},\bm\ell}}{2}\left(\nm{x^{s}-x}^{2}-\nm{x^{s+1}-x}^{2}\right).$$
Notice that $F(x^{k})\preceq_{K}F(x^{s+1})$ for all $s\leq k-1$, it leads to
$$\sum\limits_{s=0}^{k-1}\dual{c^{*}_{s},F(x^{k})-F(x)}\leq\sum\limits_{s=0}^{k-1}\frac{\dual{c^{*}_{s},\bm\ell}}{2}\left(\nm{x^{s}-x}^{2}-\nm{x^{s+1}-x}^{2}\right).$$
Denote $\hat{c}^{*}_{k}:={\sum_{s=0}^{k-1}c^{*}_{s}}/{k}$. It follows from the convexity of $\tilde{C}$ and the fact that $c^{*}_{s} \in \tilde{C}$ that $\hat{c}^{*}_{k} \in \tilde{C}$. Therefore, we conclude that
$$\dual{\hat{c}^{*}_{k},F(x^{k})-F(x)}\leq\sum\limits_{s=0}^{k-1}\frac{\dual{c^{*}_{s},\bm\ell}}{2k}\left(\nm{x^{s}-x}^{2}-\nm{x^{s+1}-x}^{2}\right).$$ 
Select $y^{k}\in\mathop{\arg\max}_{x\in\mathbb{R}^{n}}\min_{c^{*}\in \tilde{C}}\dual{c^{*},F(x^{k})-F(x)}$, it holds that
\begin{align*}
	u_{0}(x^{k})&=\max_{x\in\mathbb{R}^{n}}\min_{c^{*}\in \tilde{C}}\dual{c^{*},F(x^{k})-F(x)}=\min_{c^{*}\in \tilde{C}}\dual{c^{*},F(x^{k})-F(y^{k})}\\
	&\leq\dual{\hat{c}^{*}_{k},F(x^{k})-F(y^{k})}\leq\sum\limits_{s=0}^{k-1}\frac{\dual{c^{*}_{s},\bm\ell}}{2k}\left(\nm{x^{s}-y^{k}}^{2}-\nm{x^{s+1}-y^{k}}^{2}\right).
\end{align*}
By the definition of $y^{k}$, we deduce that $y^{k}\in\{x:F(x)\preceq_{K}F(x^{k})\}\subset\mathcal{L}_{F}(x^{s})$ for all $s\leq k-1$. Substituting this relation into (\ref{Eq6}), we have $\nm{x^{s}-y^{k}}^{2}-\nm{x^{s+1}-y^{k}}^{2}\geq0$ for all $s\leq k-1$. Therefore, 
$$u_{0}(x^{k})\leq\frac{\ell_{\max}}{2k}\sum\limits_{s=0}^{k-1}\left(\nm{x^{s}-y^{k}}^{2}-\nm{x^{s+1}-y^{k}}^{2}\right)\leq\frac{\ell_{\max}\nm{x^{0}-y^{k}}^{2}}{2k}.$$ Recall that $y^{k}\in\mathcal{L}_{F}(x^{0})$, the desired result follows.
\end{proof}
\subsubsection{Linear convergence}
By further assuming that $F(\cdot)$ is strongly $K$-convex, the linear convergence result of Algorithm \ref{mm} can be derived as follows.
\begin{theorem}\label{t3}
	Suppose that Assumption \ref{a1}(ii) holds and $F(\cdot)$ is strongly $K$-convex, let $\{x^{k}\}$ be the sequence generated by Algorithm \ref{mm} with $G_{k}(\cdot)\in\mathcal{S}_{\bm \ell,\bm\mu}(F,x^{k})$ and $0\preceq_{K}\bm\ell\prec_{K}\bm\mu$. Then, the following statements hold:
	\begin{itemize}
		\item[(i)] $\{x^{k}\}$ converges to an efficient solution $x^{*}$ of (\ref{VOP});
		\item[(ii)] $\nm{x^{k+1}-x^{*}}\leq\sqrt{\max\limits_{c^{*}\in C_{k}}\frac{\dual{c^{*},\bm\ell}}{\dual{c^{*},\bm\mu}}}\nm{x^{k}-x^{*}},~\forall k\geq0$.
	\end{itemize}
\end{theorem}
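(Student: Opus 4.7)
The plan is to exploit Lemma \ref{l3.1}(ii) evaluated at an efficient solution $x^*$, combined with the strict order $\bm\ell \prec_K \bm\mu$, to extract a contraction of $\|x^k - x^*\|^2$ with ratio strictly below one. Before applying that inequality, I need to locate such an $x^*$ and show it is a limit of $\{x^k\}$. Since Assumption \ref{a1}(i) is dropped here, boundedness must come directly from strong $K$-convexity of $F$: fixing any $\bar c^* \in C \subset K^* \setminus \{0\}$, the scalarization $\langle \bar c^*, F(\cdot)\rangle$ is strongly convex and hence coercive, so its sublevel sets are bounded, and the $K$-decreasing property of $\{F(x^k)\}$ already shown in the proof of Theorem \ref{t1} forces $\{x^k\}$ to stay inside one such sublevel set. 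Replaying the telescoping argument of Theorem \ref{t1} yields $\max_{c^* \in C}\langle c^*, G_k(x^{k+1})\rangle \to 0$, and Assumption \ref{a1}(ii) then promotes any accumulation point $x^*$ to a $K$-stationary point, which under strong $K$-convexity with modulus in $\mathrm{int}(K)$ (cf.\ the remark following Definition \ref{sc}) is an efficient solution of (\ref{VOP}).

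Next, I would verify that $F(x^*) \preceq_K F(x^k)$ for all $k \geq 0$. For a subsequence $x^{k_j} \to x^*$ and any fixed $k$, one has $F(x^{k_j}) \preceq_K F(x^k)$ once $k_j \geq k$, and the conclusion follows by closedness of $K$. With this in hand, I apply Lemma \ref{l3.1}(ii) with $x = x^*$ to obtain
\begin{equation*}
\langle c^*_k, F(x^{k+1}) - F(x^*)\rangle + \tfrac{1}{2}\|x^{k+1} - x^*\|^2 \langle c^*_k, \bm\mu\rangle \leq \tfrac{1}{2}\|x^k - x^*\|^2 \langle c^*_k, \bm\ell\rangle.
\end{equation*}
The pairing $\langle c^*_k, F(x^{k+1}) - F(x^*)\rangle$ is nonnegative since $c^*_k \in K^*$ and $F(x^*) \preceq_K F(x^{k+1})$, and $\langle c^*_k, \bm\mu\rangle > 0$ because $\bm\mu \in \mathrm{int}(K)$ and $c^*_k \neq 0$. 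Dropping the nonnegative left-hand term and dividing by $\langle c^*_k, \bm\mu\rangle$ yields $\|x^{k+1} - x^*\|^2 \leq \tfrac{\langle c^*_k, \bm\ell\rangle}{\langle c^*_k, \bm\mu\rangle}\|x^k - x^*\|^2$, and bounding the data-dependent ratio by its maximum over $C$ gives part (ii) after taking square roots.

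Finally, the strictness $\bm\ell \prec_K \bm\mu$ means $\bm\mu - \bm\ell \in \mathrm{int}(K)$, so $\langle c^*, \bm\mu - \bm\ell\rangle > 0$ for every $c^* \in K^* \setminus \{0\}$; continuity of the ratio $\langle c^*, \bm\ell\rangle/\langle c^*, \bm\mu\rangle$ together with compactness of $C$ therefore yields $\max_{c^* \in C}\langle c^*, \bm\ell\rangle/\langle c^*, \bm\mu\rangle < 1$, a genuine contraction factor. This contraction forces the whole sequence $\{x^k\}$ to converge to the accumulation point $x^*$, completing (i). The main subtlety I anticipate is the one-line bridge between the inequality tied to the particular multiplier $c^*_k$ produced by the subproblem and the uniform worst-case bound over $C$; this is precisely where the strict cone order, rather than the weak order used in Theorem \ref{t2}, is essential to turn a $\leq 1$ constant into a $<1$ one.
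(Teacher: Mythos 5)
Your proof is correct and follows essentially the same route as the paper: establish that accumulation points are efficient (using strong $K$-convexity to recover the dropped Assumption \ref{a1}(i) and to upgrade weak efficiency to efficiency), then substitute $x=x^{*}$ into Lemma \ref{l3.1}(ii), drop the nonnegative term $\dual{c^{*}_{k},F(x^{k+1})-F(x^{*})}$, and bound the resulting ratio over $C$. The only cosmetic difference is in part (i): the paper simply invokes Theorem \ref{t2}(i) (Fej\'{e}r-type monotonicity from $\bm\ell\preceq_{K}\bm\mu$), whereas you derive full-sequence convergence directly from the strict contraction factor being below one; both are valid and rest on the same inequality.
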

\begin{proof}
	(i) Since $F(\cdot)$ is strongly $K$-convex, then Assumption \ref{a1}(i) holds and every weakly efficient solution is actually an efficient solution. Therefore, assertion (i) is a consequence of Theorem \ref{t2}(i).
	\par(ii) By substituting $x=x^{*}$ into inequality (\ref{e3}), we have
	$$\dual{c^{*}_{k},F(x^{k+1})-F(x^{*})}\leq\frac{1}{2}\nm{x^{k}-x^{*}}^{2}\dual{c^{*}_{k},\bm\ell}-\frac{1}{2}\nm{x^{k+1}-x^{*}}^{2}\dual{c^{*}_{k},\bm\mu}.$$
	It follows by $F(x^{*})\preceq_{K}F(x^{k+1})$ that
	$$\nm{x^{k+1}-x^{*}}\leq\sqrt{\frac{\dual{c^{*}_{k},\bm\ell}}{\dual{c^{*}_{k},\bm\mu}}}\nm{x^{k}-x^{*}}.$$
	The desired result follows .
\end{proof}
\begin{remark}
	It seems that the convexity of $F(\cdot)$ plays no role in the proof of Theorems \ref{t2} and \ref{t3}. However, it can indeed be shown that $F(\cdot)$ is necessarily $K$-convex if $\bm\ell=\bm\mu$ and strongly $K$-convex with $\bm\mu - \bm\ell$ if $\bm\ell\prec_{K}\bm\mu$. In the next section, we will give some examples where such a condition holds.
\end{remark}
\begin{remark}\label{r3.4}
 Note that $0\notin C_{k}$ and $\text{cone}(C_{k})=K^{*}$, we have $$\max\limits_{c^{*}\in C_{k}}\frac{\dual{c^{*},\bm\ell}}{\dual{c^{*},\bm\mu}}=\max\limits_{c^{*}\in K^{*}\setminus\{0\}}\frac{\dual{c^{*},\bm\ell}}{\dual{c^{*},\bm\mu}}.$$
 Therefore, the linear convergence rate is related to $\{G_{k}(\cdot)\}$, not $\{C_{k}\}$, which confirms that the rate of convergence can be improved by choosing a tighter surrogate. 
\end{remark}
\section{First-order methods for VOPs with majorizing surrogate functions}\label{sec4}
It is worth noting that Remark~\ref{r3.4} may suggest that the choice of the base in the subproblem is inessential, which could make the question (\ref{Q}) raised in the introduction seem trivial. However, as will be demonstrated in this section, the selection of such a base not only influences the convergence rate but also determines the computational complexity of the subproblem. Both aspects are of central importance in the framework of majorization-minimization optimization. 
\par In what follows, we first revisit the classical steepest descent method for VOPs (SDVO) \citep{GS2005} and establish its connection with Algorithm \ref{mm}. To mitigate the slow convergence of SDVO, we then investigate, from a majorization-minimization perspective, how an appropriate choice of the base can effectively accelerate first-order methods for VOPs. 
\subsection{$K$-steepest descent method for VOPs without line search}
\par For $x\in\mathbb{R}^{n}$, recall that $d^{k}$, the $K$-steepest descent direction \citep{GS2005} at $x^{k}$, is defined as the optimal solution of 
\begin{align}\label{eq3.1}
	\min\limits_{d\in\mathbb{R}^{n}} \max\limits_{c^{*}\in C}\dual{c^{*}, JF(x^{k})d}+\frac{1}{2}\|d\|^{2}.
\end{align}
Select a vector $e\in{\rm int}(K)$, and denote $C_{e}=\{c^{*}\in K^{*}:\dual{c^{*},e}=1\}$. If we set $C=C_{e}$ in (\ref{eq3.1}), then the $K$-steepest descent direction can be reformulated as the optimal solution of 
\begin{equation}\label{eq5}
	\min\limits_{d\in\mathbb{R}^{n}} \max\limits_{c^{*}\in C_{e}}\dual{c^{*}, JF(x^{k})d+\frac{1}{2}\|d\|^{2}e}.
\end{equation}
\begin{remark}
	If $K=\mathbb{R}^{m}_{+}$, and $C_{e}=\Delta_{m}$, then $e=\mathbf{1}_{m}$ and the subproblem (\ref{eq5}) reduces to that of steepest descent method for MOPs \citep{FS2000}. In what follows, we refer to subproblems of the forms (\ref{eq3.1}) and (\ref{eq5}) as \textbf{\textit{seperate}} and \textbf{\textit{coupled}} subproblems, respectively.
\end{remark}
From now on, we assume that $F(\cdot)$ is $K$-smooth with $\bm \ell\in{\rm int}(K)$,
denote $$L_{\max}:=\max_{c^{*}\in C_{e}}\dual{c^{*},\bm\ell}.$$
Let us revisit the $K$-steepest descent method without line search:
\begin{algorithm} 
	\caption{$K$-steepest descent method for VOPs}\label{sd}
	\LinesNumbered 
	\KwData{$x^{0}\in\mathbb{R}^{n},L\geq L_{\max}$}
	\For{$k=0,1,...$}{  
		Update $x^{k+1}:=\mathop{\arg\min}_{x\in\mathbb{R}^{n}} \max_{c^{*}\in C_{e}}\dual{c^{*}, JF(x^{k})(x-x^{k})}+\frac{L}{2}\|x-x^{k}\|^{2}$\\
		\If{$x^{k+1}=x^{k}$}{ {\bf{return}} $K$-stationary point $x^{k}$ }}  
\end{algorithm}

We consider the following surrogate:
\begin{equation}\label{m1}
	G_{k,Le}(x):=JF(x^{k})(x-x^{k})+\frac{L}{2}\nm{x-x^{k}}^{2}e.
\end{equation}
It is obvious that Algorithm \ref{sd} is a special case of Algorithm \ref{mm} with $C_{k}=C_{e}$ and $G_{k}(\cdot)=G_{k,Le}(\cdot)$. As described in Remark \ref{r3.4}, the peformance of Algorithm \ref{sd} is mainly attributed to $G_{k,Le}(\cdot)$. The following results show that $G_{k,Le}(\cdot)$ is a majorizing surrogate function of $F(\cdot)-F(x^{k})$ near $x^{k}$.  
\begin{proposition}\label{p4.1}
	Let $G_{k,Le}(\cdot)$ be defined as (\ref{m1}). Then, the following statements hold.
	\begin{itemize}
		\item[(i)] For any $L\geq L_{\max}$, $G_{k,Le}(\cdot)$ is a majorizing surrogate of $F(\cdot)-F(x^{k})$, i.e., $F(\cdot)-F(x^{k})\preceq_{K}G_{k,Le}(\cdot)$.
		\item[(ii)] If $F(\cdot)$ is $K$-convex, then $G_{k,Le}(\cdot)\in\mathcal{S}_{Le,Le}(F,x^{k})$ for all $L\geq L_{\max}$.
		\item[(iii)] If $F(\cdot)$ is strongly $K$-convex with $\bm\mu\in{\rm int}(K)$, then $G_{k,Le}(\cdot)\in\mathcal{S}_{Le-\bm\mu,Le}(F,x^{k})$ for all $L\geq L_{\max}$.
	\end{itemize}
\end{proposition}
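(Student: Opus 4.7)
The plan is to prove the three parts sequentially, using (i) as a stepping stone and exploiting a single exact two-point identity for the quadratic surrogate $G_{k,Le}(\cdot)$ to cover the surrogate class memberships in (ii) and (iii).

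For (i), I would start from the $K$-smoothness of $F(\cdot)$, which gives $F(x)-F(x^{k}) \preceq_{K} JF(x^{k})(x-x^{k}) + \frac{1}{2}\|x-x^{k}\|^{2}\bm{\ell}$. Comparing this with $G_{k,Le}(x)$, the majorizing inequality reduces to $\bm{\ell} \preceq_{K} Le$, i.e., $Le - \bm{\ell} \in K$. For each $c^{*} \in C_{e}$ one has $\langle c^{*}, Le - \bm{\ell}\rangle = L - \langle c^{*}, \bm{\ell}\rangle \geq L - L_{\max} \geq 0$; because $C_{e}$ is a base of $K^{*}$, every element of $K^{*}$ is a nonnegative multiple of an element of $C_{e}$, so the inequality extends to all of $K^{*}$, yielding $Le-\bm{\ell} \in K^{**} = K$ by closedness, convexity, and pointedness of $K$.

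Next I would record a uniform two-point identity for $G_{k,Le}(\cdot)$. Using $\|y-x^{k}\|^{2} - \|x-x^{k}\|^{2} = \|y-x\|^{2} + 2\langle y-x, x-x^{k}\rangle$ together with $JG_{k,Le}(x) = JF(x^{k}) + Le(x-x^{k})^{T}$ yields the exact equality $G_{k,Le}(y) - G_{k,Le}(x) = JG_{k,Le}(x)(y-x) + \frac{L}{2}\|y-x\|^{2}e$ for all $x,y \in \mathbb{R}^{n}$. This single equality establishes that $G_{k,Le}(\cdot)$ is simultaneously $K$-smooth and strongly $K$-convex with common modulus $Le$, and it also implies $H_{k}(x^{k}) = 0$ and $JH_{k}(x^{k}) = JG_{k,Le}(x^{k}) - JF(x^{k}) = 0$ by direct substitution.

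With these facts in hand, (ii) and (iii) follow by invoking Lemma \ref{l2.2}. Writing $H_{k} = G_{k,Le} - (F - F(x^{k}))$: under the $K$-convexity of $F(\cdot)$, Lemma \ref{l2.2}(ii) yields $K$-smoothness of $H_{k}(\cdot)$ with $Le$, completing (ii); under strong $K$-convexity of $F(\cdot)$ with $\bm{\mu}$, Lemma \ref{l2.2}(iii) yields $K$-smoothness of $H_{k}(\cdot)$ with $Le - \bm{\mu}$, provided $\bm{\mu} \preceq_{K} Le$. This last ordering follows by chaining $\bm{\mu} \preceq_{K} \bm{\ell}$ (noted in the remark following Definition \ref{sm}) with $\bm{\ell} \preceq_{K} Le$ from the first step. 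The main obstacle I anticipate is handling the tightness clauses embedded in Definitions \ref{sc} and \ref{sm}: one must confirm that the declared moduli cannot be replaced by strictly $K$-larger vectors. The equality form of the two-point identity resolves tightness for $G_{k,Le}(\cdot)$ directly, but for $H_{k}(\cdot)$ a short extra argument is needed, showing that any allegedly tighter smoothness modulus for $H_{k}(\cdot)$ would clash with the tight modulus of $F(\cdot)$ via the very identity underlying Lemma \ref{l2.2}.
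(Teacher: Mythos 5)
Your proposal follows essentially the same route as the paper: part (i) is obtained from the $K$-smoothness of $F(\cdot)$ together with $\bm\ell\preceq_{K}L_{\max}e\preceq_{K}Le$, and parts (ii) and (iii) follow by noting that $G_{k,Le}(\cdot)$ is both strongly $K$-convex and $K$-smooth with modulus $Le$ and then applying Lemma \ref{l2.2}(ii) and (iii); your exact two-point identity and the explicit dual-cone argument for $Le-\bm\ell\in K$ merely spell out steps the paper asserts without detail. The tightness caveat you raise at the end is a genuine subtlety of Definitions \ref{sc} and \ref{sm} that the paper's own proof also leaves implicit, so it does not distinguish your argument from theirs.
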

\begin{proof}
	By the definition of $L_{\max}$, we have $\bm\ell\preceq_{K}L_{\max}e$, it follows from the $K$-smoothness of $F(\cdot)$ that assertion (i) holds. Notice that $G_{k,Le}(\cdot)$ is strongly $K$-convex and $K$-smooth with $Le$, and $\bm\mu\preceq_{K} Le$, then we obtain assertion (ii) and (iii) by Lemma \ref{l2.2} (ii) and (iii), respectively. 
\end{proof}
Note that for a strongly $K$-convex objective function, 
$G_{k,Le}(\cdot) \in \mathcal{S}_{Le-\bm{\mu},Le}(F,x^{k})$ 
for all $L \geq L_{\max}$. We are now in a position to present the rate of linear convergence for SDVO.
\begin{lemma}\label{l4.1}
		Assume that $F(\cdot)$ is strongly $K$-convex with $\bm\mu\in{\rm int}(K)$, let $\{x^{k}\}$ be the sequence generated by Algorithm \ref{sd}. Then, the following statements hold:
		\begin{itemize}
			\item[(i)] $\{x^{k}\}$ converges to an efficient solution $x^{*}$ of (\ref{VOP});
			\item[(ii)] $\nm{x^{k+1}-x^{*}}\leq\sqrt{1-{\mu_{\min}}/{L}}\nm{x^{k}-x^{*}},~\forall k\geq0$, where $\mu_{\min}:=\min_{c^{*}\in C_{e}}\dual{c^{*},\bm\mu}$.
		\end{itemize}
\end{lemma}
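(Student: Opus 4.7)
The plan is to derive Lemma 4.1 as a direct specialization of the linear convergence result in Theorem \ref{t3} once the right surrogate is identified. The algorithm in question uses precisely the quadratic surrogate $G_{k,Le}$ from (\ref{m1}) with $C = C_e$, so the whole task reduces to verifying the hypotheses of Theorem \ref{t3} and then simplifying the abstract ratio $\max_{c^*\in C}\dual{c^*,\bm\ell}/\dual{c^*,\bm\mu}$ into the concrete quantity $1-\mu_{\min}/L$.

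First I would invoke Proposition \ref{p4.1}(iii): since $F(\cdot)$ is strongly $K$-convex with $\bm\mu\in\mathrm{int}(K)$ and $L\geq L_{\max}$, we have $G_{k,Le}(\cdot)\in\mathcal{S}_{Le-\bm\mu,Le}(F,x^{k})$. The surrogate's smoothness parameter is $Le-\bm\mu$ and its strong convexity parameter is $Le$, and because $\bm\mu\in\mathrm{int}(K)$ we get $Le-\bm\mu\prec_{K}Le$, so the strict ordering hypothesis of Theorem \ref{t3} is satisfied. Next I would note that Assumption \ref{a1}(ii) is known to hold for the $K$-steepest descent method (as recorded in the remark following Assumption \ref{a1}), so all premises of Theorem \ref{t3} are in place. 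Assertion (i) then follows verbatim from Theorem \ref{t3}(i).

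For assertion (ii), Theorem \ref{t3}(ii) yields
\begin{equation*}
\nm{x^{k+1}-x^{*}}\leq\sqrt{\max_{c^{*}\in C_{e}}\frac{\dual{c^{*},Le-\bm\mu}}{\dual{c^{*},Le}}}\,\nm{x^{k}-x^{*}}.
\end{equation*}
The simplification is routine but worth spelling out: for any $c^{*}\in C_{e}$ we have $\dual{c^{*},e}=1$, hence
\begin{equation*}
\frac{\dual{c^{*},Le-\bm\mu}}{\dual{c^{*},Le}}=1-\frac{\dual{c^{*},\bm\mu}}{L\dual{c^{*},e}}=1-\frac{\dual{c^{*},\bm\mu}}{L}.
\end{equation*}
Taking the maximum over $c^{*}\in C_{e}$ converts the minimum in the numerator into $\mu_{\min}=\min_{c^{*}\in C_{e}}\dual{c^{*},\bm\mu}$, yielding the claimed factor $\sqrt{1-\mu_{\min}/L}$.

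I do not anticipate a substantive obstacle here, since the lemma is essentially a bookkeeping exercise: the only subtlety is making sure that the parameters $(Le-\bm\mu,Le)$ appearing in Proposition \ref{p4.1}(iii) are plugged into the right slots of the abstract rate in Theorem \ref{t3}, and that the normalization $\dual{c^{*},e}=1$ enforced by the base $C_{e}$ is used to collapse the ratio. If any care is needed it is in observing that $\mu_{\min}>0$ (so the rate is strictly less than one): this follows from $\bm\mu\in\mathrm{int}(K)$, since $\dual{c^{*},\bm\mu}>0$ for every nonzero $c^{*}\in K^{*}$ and $C_{e}$ is compact and does not contain the origin.
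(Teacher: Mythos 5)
Your proposal is correct and follows essentially the same route as the paper: the paper's own proof simply notes that $G_{k,Le}(\cdot)\in\mathcal{S}_{Le-\bm\mu,Le}(F,x^{k})$, that Assumption \ref{a1} holds, and that Theorem \ref{t3} with $C=C_{e}$ then yields both assertions. Your only addition is spelling out the routine simplification of the rate using $\dual{c^{*},e}=1$, which the paper leaves implicit.
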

\begin{proof}
	Since $F(\cdot)$ is strongly $K$-convex, it follows that $G_{k,Le}(\cdot)\in\mathcal{S}_{Le-\bm\mu,Le}(F,x^{k})$ and Assumption \ref{a1} holds in this case. By setting $C_{k}=C_{e}$, Theorem \ref{t3} (i) and (ii) reduce to the assertions (i) and (ii), respectively.
\end{proof}
\begin{remark}\label{r4.2}
	If $K=\mathbb{R}^{m}_{+}$ and $e=\mathbf{1}_{m}$, then $C_{e}=\Delta_{m}$ is a base of $\mathbb{R}^{m}_{+}$, the convergence rate in Lemma \ref{l4.1} reduces to that of \citep[Theorem 5.3]{TFY2023c} with $g(\cdot)=0$. Specifically, the linear convergence rate is worse than $\mathcal{O}((\sqrt{1-{\mu_{\min}}/{L_{\max}}})^k)$ (setting $L = L_{\max}$), where $L_{\max}=\max_{i\in[m]}\{\bm\ell_{i}\}$ and $\mu_{\min}=\min_{i\in[m]}\{\bm\mu_{i}\}$. Therefore, even each of objective functions is well-conditioned ($\max_{i\in[m]}\{\bm\ell_{i}/\bm\mu_{i}\}$ is relative small), the linear convergence rate can be very slow due to objective imbalances ($L_{\max}/\mu_{\min}$ can be extremely large). It is worth noting that the rate of convergence is related to $C_e$, since in the seperate subproblem the surrogate function is inherently determined by $C_e$. To the best of our knowledge, apart from $\Delta_{m}$, it remains an open problem for the better choice of the base in MOPs.
\end{remark}
\subsection{Improved $K$-steepest descent method for VOPs without line search}
As detailed in Remark \ref{r4.2}, the linear convergence rate can be very slow with imbalanced objectives, this is mainly due to the large gap between $F(\cdot)-F(x^{k})$ and $G_{k,Le}(\cdot)$ from a majorization-minimization perspective. To reduce this gap, one natural strategy is to construct a tighter surrogate function that better approximates the local behavior of $F(\cdot) - F(x^{k})$.
Notice that $\bm\ell\preceq_{K}L_{\max}e$, we denote the following tighter majorizing surrogate:
\begin{equation}\label{m2}
	G_{k,\bm\ell}(x):=JF(x^{k})(x-x^{k})+\frac{1}{2}\nm{x-x^{k}}^{2}\bm\ell.
\end{equation}

The properties of $G_{k,\bm\ell}(\cdot)$ is presented as follows.
\begin{proposition}\label{p4.2}
	Let $G_{k,\bm\ell}(\cdot)$ be defined as (\ref{m2}). Then, the following statements hold.
	\begin{itemize}
		\item[(i)] $G_{k,\bm\ell}(\cdot)$ is a tight majorizing surrogate of $F(\cdot)-F(x^{k})$, i.e., $F(\cdot)-F(x^{k})\preceq_{K}G_{k,\bm\ell}(\cdot)$, and the relation does not hold for any $G_{k,\hat{\bm\ell}}(\cdot)$ such that $\bm\ell\not\preceq_{K}\hat{\bm\ell}$.
		\item[(ii)] If $F(\cdot)$ is $K$-convex, then $G_{k,\bm\ell}(\cdot)\in\mathcal{S}_{\bm\ell,\bm\ell}(F,x^{k})$.
		\item[(iii)] If $F(\cdot)$ is strongly $K$-convex with $\bm\mu\in{\rm int}(K)$, then $G_{k,\bm\ell}(\cdot)\in\mathcal{S}_{\bm\ell-\bm\mu,\bm\ell}(F,x^{k})$.
	\end{itemize}
\end{proposition}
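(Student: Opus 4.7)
The plan is to verify the three assertions by reading off the structural properties of $G_{k,\bm\ell}$ as an explicit quadratic vector-valued function and then invoking the previously established regularity result of Lemma \ref{l2.2}.

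For (i), I would directly specialize the $K$-smoothness inequality of Definition \ref{sm} to $x = x^k$: this yields $F(y) - F(x^k) \preceq_K JF(x^k)(y - x^k) + \tfrac{1}{2}\|y - x^k\|^2 \bm\ell = G_{k,\bm\ell}(y)$ for every $y \in \mathbb{R}^n$, which is the majorization claim. For tightness I would argue by contradiction: if some $\hat{\bm\ell}$ with $\bm\ell \not\preceq_K \hat{\bm\ell}$ also produced a global majorizer $G_{k,\hat{\bm\ell}}$, then rearranging gives the smoothness inequality centered at $x^k$ with the tentative parameter $\hat{\bm\ell}$, contradicting the tightness clause in Definition \ref{sm}.

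For (ii) and (iii), I would first observe that the correction term $\tfrac{1}{2}\|x - x^k\|^2 \bm\ell$ is scalar-quadratic multiplied by a fixed direction $\bm\ell \in {\rm int}(K)$; the standard polarization identity then shows, coordinate-free, that $G_{k,\bm\ell}$ is simultaneously strongly $K$-convex and $K$-smooth with parameter $\bm\ell$. The residual $H_k(y) := G_{k,\bm\ell}(y) - F(y) + F(x^k)$ satisfies $H_k(x^k) = 0$ and $JH_k(x^k) = JG_{k,\bm\ell}(x^k) - JF(x^k) = JF(x^k) - JF(x^k) = 0$ by direct differentiation. Combined with (i), this verifies both defining conditions of a first-order majorizing surrogate. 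For (ii), $K$-convexity of $F$ plus $K$-smoothness of $G_{k,\bm\ell}$ with $\bm\ell$ delivers, via Lemma \ref{l2.2}(ii), that $H_k$ is $K$-smooth with $\bm\ell$, so $G_{k,\bm\ell} \in \mathcal{S}_{\bm\ell,\bm\ell}(F,x^k)$. For (iii), strong $K$-convexity of $F$ with $\bm\mu \in {\rm int}(K)$ together with the remark after Definition \ref{sm} guarantees $\bm\mu \preceq_K \bm\ell$, so Lemma \ref{l2.2}(iii) applies and yields $H_k$ is $K$-smooth with $\bm\ell - \bm\mu$, placing $G_{k,\bm\ell} \in \mathcal{S}_{\bm\ell - \bm\mu, \bm\ell}(F,x^k)$.

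The only delicate point is the tightness half of (i): one must be sure that the global tightness of $\bm\ell$ in Definition \ref{sm} descends to tightness of the majorization at the single base point $x^k$. In principle a local smoothness parameter at $x^k$ could be strictly smaller than the global one, but since we only require the majorization to hold for \emph{all} $y$, the contradiction argument uses the full family of $y \in \mathbb{R}^n$ and reduces exactly to the clause in Definition \ref{sm}; the remaining verifications are then routine applications of the definitions and Lemma \ref{l2.2} and present no further obstacle.
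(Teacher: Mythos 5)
Your proof follows essentially the same route as the paper's: part (i) is the $K$-smoothness inequality of Definition \ref{sm} specialized to the base point $x^{k}$, and parts (ii) and (iii) come from observing that $G_{k,\bm\ell}$ is both strongly $K$-convex and $K$-smooth with parameter $\bm\ell$, checking $H_{k}(x^{k})=0$ and $JH_{k}(x^{k})=0$, and invoking Lemma \ref{l2.2}(ii) and (iii) together with $\bm\mu\preceq_{K}\bm\ell$. The paper's own proof is a one-line reference to the argument of Proposition \ref{p4.1}, which is exactly this.

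The one point worth flagging is the tightness clause in (i), which you correctly identify as the delicate step but then dismiss too quickly. The tightness clause of Definition \ref{sm} only says that for $\hat{\bm\ell}$ with $\bm\ell\not\preceq_{K}\hat{\bm\ell}$ the smoothness inequality fails at \emph{some} pair $(x,y)$; it does not guarantee failure at a pair with $x=x^{k}$. A function can be globally $K$-smooth with tight parameter $\bm\ell$ (the tightness being witnessed far from $x^{k}$) while still admitting a majorizer $G_{k,\hat{\bm\ell}}$ with smaller $\hat{\bm\ell}$ anchored at the particular point $x^{k}$, so your contradiction does not "reduce exactly" to the clause in Definition \ref{sm}. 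This is a genuine gap in the tightness half of (i) — though, to be fair, the paper offers no argument for it at all (Proposition \ref{p4.1}(i) makes no tightness claim), so you are no worse off than the source; everything else in your write-up is sound.
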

\begin{proof}
	The assertions can be obtained by using the similar arguments as in the proof of Proposition \ref{p4.1}.
\end{proof}
By using the tighter surrogate, we devise the following improved $K$-steepest descent method with coupled subproblems for VOPs.

\begin{algorithm}
	\caption{improved $K$-steepest descent method for VOPs with coupled subproblems}\label{isd1}
	\LinesNumbered 
	\KwData{$x^{0}\in\mathbb{R}^{n}$}
	\For{$k=0,1,...$}{  
		Update $x^{k+1}:=\mathop{\arg\min}_{x\in\mathbb{R}^{n}} \max_{c^{*}\in C_{e}}\dual{c^{*}, JF(x^{k})(x-x^{k})+\frac{1}{2}\|x-x^{k}\|^{2}\bm\ell}$\\
		\If{$x^{k+1}=x^{k}$}{ {\bf{return}} $K$-stationary point $x^{k}$ }}  
\end{algorithm}

\begin{lemma}\label{l4.2}
	Assume that $F(\cdot)$ is strongly $K$-convex with $\bm\mu\in{\rm int}(K)$, let $\{x^{k}\}$ be the sequence generated by Algorithm \ref{isd1}. Then, the following statements hold:
	\begin{itemize}
		\item[(i)] $\{x^{k}\}$ converges to an efficient solution $x^{*}$ of (\ref{VOP});
		\item[(ii)] $\nm{x^{k+1}-x^{*}}\leq\sqrt{1-{1}/{\kappa_{F,\preceq_{K}}}}\nm{x^{k}-x^{*}},~\forall k\geq0$.
	\end{itemize}
\end{lemma}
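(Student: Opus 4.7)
The plan is to show that Lemma \ref{l4.2} is an immediate corollary of Theorem \ref{t3} applied with the tighter surrogate $G_{k,\bm\ell}(\cdot)$, the main task being to identify the surrogate's smoothness/strong-convexity parameters and to manipulate the resulting rate into the clean form $\sqrt{1-1/\kappa_{F,\preceq_K}}$.

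First I would verify the hypotheses of Theorem \ref{t3}. By the preceding proposition, since $F(\cdot)$ is strongly $K$-convex with $\bm\mu\in{\rm int}(K)$, we have $G_{k,\bm\ell}(\cdot)\in\mathcal{S}_{\bm\ell-\bm\mu,\bm\ell}(F,x^{k})$. Because $\bm\mu\in{\rm int}(K)$, the strict partial-order inequality $\bm\ell-\bm\mu\prec_{K}\bm\ell$ holds, which is exactly the relation required in Theorem \ref{t3}. Assumption \ref{a1}(ii) for this surrogate is standard (and the first-order optimality conditions of the subproblem guarantee that vanishing subproblem value forces $K$-stationarity), so both parts of Theorem \ref{t3} apply here with $C=C_{e}$.

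Next, assertion (i) follows directly: Theorem \ref{t3}(i) gives convergence of $\{x^{k}\}$ to a weakly efficient solution $x^{*}$, and strong $K$-convexity upgrades this to an efficient solution (by the remark following Definition \ref{sc}). For assertion (ii), Theorem \ref{t3}(ii) with the parameters $(\bm\ell-\bm\mu,\bm\ell)$ yields
\begin{equation*}
\nm{x^{k+1}-x^{*}}\leq\sqrt{\max_{c^{*}\in C_{e}}\frac{\dual{c^{*},\bm\ell-\bm\mu}}{\dual{c^{*},\bm\ell}}}\,\nm{x^{k}-x^{*}}.
\end{equation*}
Splitting the numerator and using $\min/\max$ duality on the positive ratio,
\begin{equation*}
\max_{c^{*}\in C_{e}}\frac{\dual{c^{*},\bm\ell-\bm\mu}}{\dual{c^{*},\bm\ell}}=1-\min_{c^{*}\in C_{e}}\frac{\dual{c^{*},\bm\mu}}{\dual{c^{*},\bm\ell}}=1-\frac{1}{\max_{c^{*}\in C_{e}}\dual{c^{*},\bm\ell}/\dual{c^{*},\bm\mu}}.
\end{equation*}
By the remark after (\ref{cn}), the condition number is base-independent, so the final denominator equals $\kappa_{F,\preceq_{K}}$, giving the claimed rate.

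The only subtle point is justifying the base-independence step (i.e., replacing the maximum over $C_{e}$ with the maximum over $K^{*}\setminus\{0\}$); this is already recorded in the remark following the definition of $\kappa_{F,\preceq_{K}}$, since every $c^{*}\in K^{*}\setminus\{0\}$ is a positive multiple of some element of $C_{e}$ and the ratio $\dual{c^{*},\bm\ell}/\dual{c^{*},\bm\mu}$ is scale-invariant. Beyond this observation, the argument is essentially bookkeeping on top of Theorem \ref{t3}, with no independent estimation required.
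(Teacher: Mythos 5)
Your proposal is correct and follows essentially the same route as the paper: the paper likewise proves this lemma by noting $G_{k,\bm\ell}(\cdot)\in\mathcal{S}_{\bm\ell-\bm\mu,\bm\ell}(F,x^{k})$ and invoking Theorem \ref{t3} with $C=C_{e}$ (via the argument of Lemma \ref{l4.1}). Your write-up merely makes explicit the algebraic step $\max_{c^{*}}\dual{c^{*},\bm\ell-\bm\mu}/\dual{c^{*},\bm\ell}=1-1/\kappa_{F,\preceq_{K}}$ and the base-independence of the condition number, which the paper leaves implicit.
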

\begin{proof}
	The assertions can be obtained by using the similar arguments as in the proof of Lemma \ref{l4.1}.
\end{proof}
\begin{remark}\label{r4.3}
	If $K=\mathbb{R}^{m}_{+}$, and $e=\mathbf{1}_{m}$, the convergence rate in Lemma \ref{l4.2} reduces to that of \citep[Corollary 4.3]{CTY2023b} with $g(\cdot)=0$. Notice that $1 / \kappa_{F,\preceq_{K}} \geq\mu_{\min}/L $, which indicates that Algorithm \ref{isd1} enjoys faster linear convergence than Algorithm \ref{sd}. Furthermore, by Remark \ref{r3.4}, we conclude that the improved linear convergence does not depend on the choice of $C_e$.
\end{remark}
\subsection{Trade-off between surrogate gap and per-iteration cost}
Although Algorithms \ref{isd1} exhibits improved linear convergence by using a tighter surrogate function, the per-iteration cost is more expensive than that of Algorithm \ref{sd} due to coupled subproblems. Details on solving these subproblems will be provided in Section \ref{sec6} (see Remark \ref{r6.2}). In the spirit of majorization-minimization optimization, a direct question arises: how to strike a better trade-off in terms of surrogate gap and per-iteration cost?
\par Recall that the linear convergence rate of Algorithm \ref{isd1} does not depend on the choice of $C_e$, which is mainly due to coupled subproblems. By denoting
$$C_{\bm\ell}:=\{c^{*}\in K^{*}:\dual{c^{*},\bm\ell}=1\},$$
we propose the following improved $K$-steepest descent method with seperate subproblems for VOPs.

\begin{algorithm} 
	\caption{improved $K$-steepest descent method for VOPs with seperate subproblems}\label{isd2}
	\LinesNumbered 
	\KwData{$x^{0}\in\mathbb{R}^{n}$}
	\For{$k=0,1,...$}{  
		Update $x^{k+1}:=\mathop{\arg\min}_{x\in\mathbb{R}^{n}} \max_{c^{*}\in C_{\bm\ell}}\dual{c^{*}, JF(x^{k})(x-x^{k})} +\frac{1}{2}\|x-x^{k}\|^{2}$\\
		\If{$x^{k+1}=x^{k}$}{ {\bf{return}} $K$-stationary point $x^{k}$ }}  
\end{algorithm}

Using the definition of $C_{\bm\ell}$, the seperate subproblem in Algorithm \ref{isd2} can be rewritten equivalently as the following coupled form:
$$\min_{x\in\mathbb{R}^{n}} \max_{c^{*}\in C_{\bm\ell}}\dual{c^{*}, JF(x^{k})(x-x^{k})+\frac{1}{2}\|x-x^{k}\|^{2}\bm\ell}.$$
Therefore, Algorithm \ref{isd2} enjoys the same improved linear convergence as that of Algorithm \ref{isd1}.
\begin{lemma}\label{l4.2.1}
	Assume that $F(\cdot)$ is strongly $K$-convex with $\bm\mu\in{\rm int}(K)$, let $\{x^{k}\}$ be the sequence generated by Algorithm \ref{isd2}. Then, the following statements hold:
	\begin{itemize}
		\item[(i)] $\{x^{k}\}$ converges to an efficient solution $x^{*}$ of (\ref{VOP});
		\item[(ii)] $\nm{x^{k+1}-x^{*}}\leq\sqrt{1-{1}/{\kappa_{F,\preceq_{K}}}}\nm{x^{k}-x^{*}},~\forall k\geq0$.
	\end{itemize}
\end{lemma}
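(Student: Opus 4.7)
The plan is to argue that Algorithm~\ref{isd2} fits the generic majorization-minimization template of Algorithm~\ref{mm} with the same tighter surrogate $G_{k,\bm\ell}(\cdot)$ used in Algorithm~\ref{isd1}, the only change being that the base $C_e$ is replaced by $C_{\bm\ell}$. Once this identification is made, both conclusions will follow by invoking Theorem~\ref{t3} and observing that the rate it produces is actually independent of which base of $K^{*}$ is used.

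First I would use the rewriting already pointed out right after the algorithm statement: since every $c^{*}\in C_{\bm\ell}$ satisfies $\dual{c^{*},\bm\ell}=1$, the scalar $\tfrac{1}{2}\|x-x^{k}\|^{2}$ can be absorbed as $\tfrac{1}{2}\|x-x^{k}\|^{2}\dual{c^{*},\bm\ell}$, so that the subproblem becomes
\begin{equation*}
\min_{x\in\mathbb{R}^{n}}\max_{c^{*}\in C_{\bm\ell}}\dual{c^{*},\,JF(x^{k})(x-x^{k})+\tfrac{1}{2}\|x-x^{k}\|^{2}\bm\ell}=\min_{x\in\mathbb{R}^{n}}\max_{c^{*}\in C_{\bm\ell}}\dual{c^{*},G_{k,\bm\ell}(x)}.
\end{equation*}
This is exactly the update rule of Algorithm~\ref{mm} with surrogate $G_{k,\bm\ell}(\cdot)$ and base $C=C_{\bm\ell}$. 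By the proposition preceding Algorithm~\ref{isd1}, when $F(\cdot)$ is strongly $K$-convex with $\bm\mu\in\mathrm{int}(K)$ we have $G_{k,\bm\ell}(\cdot)\in\mathcal{S}_{\bm\ell-\bm\mu,\bm\ell}(F,x^{k})$, and in particular $\bm\ell-\bm\mu\prec_{K}\bm\ell$. Moreover, Assumption~\ref{a1} is automatic under strong $K$-convexity (the level set is bounded and every $K$-stationary point is actually efficient), so the hypotheses of Theorem~\ref{t3} hold with $C=C_{\bm\ell}$.

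Applying Theorem~\ref{t3}(i) yields assertion (i) directly. For assertion (ii), Theorem~\ref{t3}(ii) gives
\begin{equation*}
\|x^{k+1}-x^{*}\|\leq\sqrt{\max_{c^{*}\in C_{\bm\ell}}\frac{\dual{c^{*},\bm\ell-\bm\mu}}{\dual{c^{*},\bm\ell}}}\,\|x^{k}-x^{*}\|=\sqrt{1-\min_{c^{*}\in C_{\bm\ell}}\frac{\dual{c^{*},\bm\mu}}{\dual{c^{*},\bm\ell}}}\,\|x^{k}-x^{*}\|.
\end{equation*}
The remaining step is to identify the infimum inside the square root with $1/\kappa_{F,\preceq_{K}}$. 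By the remark following the definition of the condition number, the ratio $\dual{c^{*},\bm\ell}/\dual{c^{*},\bm\mu}$ is invariant under positive rescaling of $c^{*}$, so its supremum over any base of $K^{*}$ coincides with its supremum over $K^{*}\setminus\{0\}$. Consequently $\min_{c^{*}\in C_{\bm\ell}}\dual{c^{*},\bm\mu}/\dual{c^{*},\bm\ell}=1/\kappa_{F,\preceq_{K}}$, which delivers the claimed bound.

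The only mild subtlety will be the last step, namely verifying that the extremal ratio is truly base-independent; this is immediate from the cone-only characterization of $\kappa_{F,\preceq_{K}}$, and it is precisely what makes using the base $C_{\bm\ell}$ natural and vindicates the equivalence between Algorithms~\ref{isd1} and~\ref{isd2} at the level of their convergence rate.
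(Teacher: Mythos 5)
Your proposal is correct and follows exactly the route the paper intends: the rewriting of the subproblem stated immediately after Algorithm~\ref{isd2} identifies it as an instance of Algorithm~\ref{mm} with surrogate $G_{k,\bm\ell}(\cdot)\in\mathcal{S}_{\bm\ell-\bm\mu,\bm\ell}(F,x^{k})$ and base $C=C_{\bm\ell}$, after which Theorem~\ref{t3} gives both assertions, with the base-independence of $\kappa_{F,\preceq_{K}}$ closing the rate computation. The paper leaves this argument implicit (deferring to the proofs of Lemmas~\ref{l4.1} and~\ref{l4.2}), and your write-up supplies precisely the missing details without deviating from that approach.
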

\begin{remark}\label{r4.4}
	If $K=\mathbb{R}^{m}_{+}$, and $\Delta_{m}^{\bm\ell}:=\{c^{*}\in\mathbb{R}^{m}_{++}:\dual{c^{*},\bm\ell}=1\}$, the Algorithm \ref{isd2} reduces to  \citep[Algorithm 5]{CTY2023b} with $g(\cdot)=0$. Interestingly, the relations between Algorithms \ref{sd}, \ref{isd1} and \ref{isd2} depend solely on the choice of $C_e$. If $C_e=C_{\bm\ell}$, Algorithms \ref{sd}, \ref{isd1} and \ref{isd2} are equivalent. Consequently, regarding the open problem of selecting a better $C_e$ mentioned in Remark \ref{r4.2}, we provide a theoretical answer by setting $C_e=C_{\bm\ell}$. Furthermore, we can summarize that choosing a tighter surrogate function is equivalent to selecting an appropriate base in the seperate subproblem.
\end{remark}
\begin{remark}
Although Algorithms \ref{isd1} and \ref{isd2} both exhibit similar improved linear convergence, the computational cost of solving seperate subproblems is generally lower in Algorithm \ref{isd2}. Details on solving these subproblems will be provided in Section \ref{sec6} (see Remark \ref{r6.2}). 
\end{remark}
\section{First-order methods for VOPs with non-majorizing surrogate functions}\label{sec5}
In the previous section, the majorization-minimization optimization methods were developed using majorizing surrogate functions; however, these surrogates may be overly conservative due to reliance on global upper bounds. From the perspective of majorization-minimization, selecting a non-majorizing surrogate function could potentially enhance performance.

\subsection{$K$-steepest descent method with line search}

Firstly, we revisit $K$-steepest descent method for VOPs with line search.

\begin{algorithm}[H]
	\caption{K-steepest descent method for VOPs with line search}\label{alg4}
	\LinesNumbered 
	\KwData{$x^{0}\in\mathbb{R}^{n},\gamma\in(0,1)$}
	\For{$k=0,1,...$}{  Update $d^{k}:=\mathop{\arg\min}_{d\in\mathbb{R}^{n}} \max_{c^{*}\in C_{e}}\dual{c^{*}, JF(x^{k})d}+\frac{1}{2}\|d\|^{2}$\\
		\eIf{$d^{k}=0$}{ {\bf{return}} $K$-stationary point $x^{k}$ }{Compute the stepsize $t_{k}\in(0,1]$ in the following way:
			$$t_{k}:=\max\left\{\gamma^{j}:j\in\mathbb{N},F(x^{k}+\gamma^{j}d^{k})-F(x^{k})\preceq_{K}\gamma^{j}\left(JF(x^{k})d^{k}+\frac{1}{2}\nm{d^{k}}^{2}e\right)\right\}$$\\
			$x^{k+1}:= x^{k}+t_{k}d^{k}$}}  
\end{algorithm}
 
The stepsize has the following lower bound.

\begin{proposition}\rm\label{p5.1}
	The stepsize generated in Algorithm \ref{alg4} satisfies $t_{k}\geq t_{\min}:=\min\left\{\frac{\gamma}
	{L_{\max}},1\right\}$.
\end{proposition}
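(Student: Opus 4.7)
The plan is to use the $K$-smoothness of $F(\cdot)$ to produce a sufficient condition on the stepsize that implies the Armijo-like acceptance inequality in Algorithm \ref{alg4}, and then to translate this sufficient condition into a lower bound on the backtracked $t_k$.

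First I would apply the $K$-smoothness inequality with $x=x^k$ and $y=x^k+td^k$ to obtain
\begin{equation*}
F(x^k+td^k)-F(x^k)\preceq_K tJF(x^k)d^k+\frac{t^2}{2}\nm{d^k}^2\bm\ell.
\end{equation*}
Comparing this with the line-search target $t\bigl(JF(x^k)d^k+\frac{1}{2}\nm{d^k}^2 e\bigr)$, it suffices (by transitivity of $\preceq_K$) to establish
\begin{equation*}
\frac{t^2}{2}\nm{d^k}^2\bm\ell\preceq_K\frac{t}{2}\nm{d^k}^2 e,
\end{equation*}
which, assuming $d^k\neq 0$, reduces to the cone inequality $t\bm\ell\preceq_K e$.

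Next I would verify $\bm\ell\preceq_K L_{\max}e$ directly from the definition $L_{\max}=\max_{c^*\in C_e}\dual{c^*,\bm\ell}$: for any $c^*\in K^*\setminus\{0\}$ the normalized element $c^*/\dual{c^*,e}$ lies in $C_e$ (since $e\in\mathrm{int}(K)$ forces $\dual{c^*,e}>0$), so $\dual{c^*,\bm\ell}\leq L_{\max}\dual{c^*,e}$, whence $L_{\max}e-\bm\ell\in (K^*)^*=K$. Combining with the previous step, whenever $tL_{\max}\leq 1$ we obtain $t\bm\ell\preceq_K tL_{\max}e\preceq_K e$, so the Armijo condition is satisfied.

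Finally I would analyze the backtracking: if $1\leq 1/L_{\max}$, then $j=0$ already works and $t_k=1\geq t_{\min}$. Otherwise, let $j^\ast$ be the smallest nonnegative integer with $\gamma^{j^\ast}\leq 1/L_{\max}$; by minimality $\gamma^{j^\ast-1}>1/L_{\max}$, hence $\gamma^{j^\ast}>\gamma/L_{\max}$, and since the condition holds at this $j^\ast$, the maximum defining $t_k$ yields $t_k\geq\gamma^{j^\ast}>\gamma/L_{\max}$. In either case $t_k\geq\min\{\gamma/L_{\max},1\}=t_{\min}$. The only mildly subtle point is the cone inequality $\bm\ell\preceq_K L_{\max}e$, which needs the bipolar identification and the fact that $e\in\mathrm{int}(K)$; the rest is essentially the standard Armijo backtracking argument transcribed to the partial-order setting.
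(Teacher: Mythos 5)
Your proof is correct and follows essentially the same route as the paper's: both rest on the $K$-smoothness inequality together with the definition of $L_{\max}$, and both close with the standard backtracking estimate $\gamma^{j^{*}}>\gamma/L_{\max}$. The only (presentational) difference is that you argue directly that the acceptance test holds whenever $tL_{\max}\leq 1$, which requires the full cone inequality $\bm\ell\preceq_{K}L_{\max}e$ via $(K^{*})^{*}=K$, whereas the paper argues contrapositively by extracting a single $c^{*}_{1}\in C_{e}$ at which the test fails at $t_{k}/\gamma$ and deriving the scalar bound $t_{k}\geq\gamma/\dual{c^{*}_{1},\bm\ell}\geq\gamma/L_{\max}$.
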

\begin{proof}
	By the line search condition in Algorithm \ref{alg4}, we have
	$$	F(x^{k}+\frac{t_{k}}{\gamma}d^{k})-F(x^{k}) \not\preceq_{K} \frac{t_{k}}{\gamma}\left(JF(x^{k})d^{k}+\frac{1}{2}\nm{d^{k}}^{2}e\right).$$
	Then there exists $c^{*}_{1}\in C_{e}$ such that
	\begin{equation}\label{ine1}
		\dual{c^{*}_{1},F(x^{k}+\frac{t_{k}}{\gamma}d^{k})-F(x^{k})}>\dual{c^{*}_{1},\frac{t_{k}}{\gamma}\left(JF(x^{k})d^{k}+\frac{1}{2}\nm{d^{k}}^{2}e\right)}.
	\end{equation}
	On the other hand, the $K$-smoothness of $F(\cdot)$ implies
	$$F(x^{k}+\frac{t_{k}}{\gamma}d^{k})-F(x^{k})\preceq_{K}\frac{t_{k}}{\gamma}JF(x^{k})d^{k}+\frac{1}{2}\nm{\frac{t_{k}}{\gamma}d^{k}}^{2}\bm\ell.$$
	Therefore, we have
	$$\dual{c^{*}_{1},F(x^{k}+\frac{t_{k}}{\gamma}d^{k})-F(x^{k})}\leq\dual{c^{*}_{1},\frac{t_{k}}{\gamma}JF(x^{k})d^{k}}+\frac{1}{2}\nm{\frac{t_{k}}{\gamma}d^{k}}^{2}\dual{c^{*}_{1},\bm\ell}.$$
	This, together with inequality (\ref{ine1}), yields
	$$t_{k}\geq \frac{\gamma}{\dual{c^{*}_{1},\bm\ell}}.$$
	Then the desired result follows.
\end{proof}

We consider the following  surrogate:
\begin{equation}\label{m3}
	G_{k,e/t_{\min}}(x):=JF(x^{k})(x-x^{k})+\frac{1}{2t_{\min}}\nm{x-x^{k}}^{2}e.
\end{equation}
The following results show that $G_{k,e/t_{\min}}(\cdot)$ is a non-majorizing surrogate function of $F(\cdot)-F(x^{k})$ near $x^{k}$.
\begin{proposition}\label{p5.2}
	Let $G_{k,e/t_{\min}}(\cdot)$ be defined as (\ref{m3}). Then, the following statements hold.
	\begin{itemize}
		\item[(i)] $F(x^{k+1})-F(x^{k})\preceq_{K}G_{k,e/t_{\min}}(x^{k+1})$.
		\item[(ii)] If $F(\cdot)$ is $K$-convex, then $G_{k,e/t_{\min}}(\cdot)\in\mathcal{S}_{e/t_{\min},e/t_{\min}}(F,x^{k})$.
		\item[(iii)] If $F(\cdot)$ is strongly $K$-convex with $\bm\mu\in{\rm int}(K)$, then $G_{k,e/t_{\min}}(\cdot)\in\mathcal{S}_{e/t_{\min}-\bm\mu,e/t_{\min}}(F,x^{k})$.
	\end{itemize}
\end{proposition}
\begin{proof}
	By the line search condition, we have
	$$F(x^{k+1})-F(x^{k})\preceq_{K}JF(x^{k})(x^{k+1}-x^{k})+\frac{1}{2t_{k}}\nm{x^{k+1}-x^{k}}^{2}e\preceq_{K}JF(x^{k})(x^{k+1}-x^{k})+\frac{1}{2t_{\min}}\nm{x^{k+1}-x^{k}}^{2}e.$$ Then, the assertion (i) holds. The assertions (ii) and (iii) can be obtained by using the similar arguments as in the proof of Proposition \ref{p4.1}.
\end{proof}
\begin{lemma}\label{l5.1}
	Assume that $F(\cdot)$ is strongly $K$-convex with $\bm\mu\in{\rm int}(K)$, let $\{x^{k}\}$ be the sequence generated by Algorithm \ref{alg4}. Then, the following statements hold:
	\begin{itemize}
		\item[(i)] $\{x^{k}\}$ converges to an efficient solution $x^{*}$ of (\ref{VOP});
		\item[(ii)] $\nm{x^{k+1}-x^{*}}\leq\sqrt{1-t_{\min}{\mu_{\min}}}\nm{x^{k}-x^{*}},~\forall k\geq0$, where $\mu_{\min}:=\min_{c^{*}\in C_{e}}\dual{c^{*},\bm\mu}$.
	\end{itemize}
\end{lemma}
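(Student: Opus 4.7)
The plan is to mirror the pattern used for Lemmas \ref{l4.1}, \ref{l4.2} and \ref{l4.2.1}, namely, recognise Algorithm \ref{alg4} as an instance of the generic majorization-minimization scheme with the surrogate $G_{k,e/t_k}(\cdot)$ in (\ref{m3}), and then appeal to the linear-convergence machinery of Theorem \ref{t3}. The novelty here is that the surrogate parameters $(\bm\ell_k,\bm\mu_k)=(e/t_k-\bm\mu,\,e/t_k)$ now depend on $k$ through the Armijo stepsize $t_k$, so Theorem \ref{t3} cannot be applied verbatim; instead I will extract its one-step contraction inequality and control it uniformly via the stepsize lower bound from Proposition \ref{p5.1}.

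Concretely, I would first note that by the proposition preceding this lemma, $G_{k,e/t_k}(\cdot)\in\mathcal{S}_{e/t_k-\bm\mu,\,e/t_k}(F,x^k)$, so Lemma \ref{l3.1}(ii) applies with this surrogate. Setting $x=x^*$ in that inequality and using $F(x^*)\preceq_K F(x^{k+1})$ (which holds because $\{F(x^k)\}$ is $K$-decreasing with infimum $\succeq_K F(x^*)$, $x^*$ being the efficient solution we are about to identify), one obtains
\begin{equation*}
\tfrac{1}{2}\|x^{k+1}-x^*\|^{2}\dual{c^{*}_{k},e/t_k}\;\leq\;\tfrac{1}{2}\|x^{k}-x^*\|^{2}\dual{c^{*}_{k},e/t_k-\bm\mu}.
\end{equation*}
Since $c^{*}_{k}\in C_{e}$ gives $\dual{c^{*}_{k},e}=1$, this simplifies to
\begin{equation*}
\|x^{k+1}-x^*\|^{2}\;\leq\;\bigl(1-t_k\dual{c^{*}_{k},\bm\mu}\bigr)\|x^{k}-x^*\|^{2}\;\leq\;\bigl(1-t_{\min}\mu_{\min}\bigr)\|x^{k}-x^*\|^{2},
\end{equation*}
where the last step uses $t_k\geq t_{\min}$ from Proposition \ref{p5.1} together with $\dual{c^{*}_{k},\bm\mu}\geq\mu_{\min}$, yielding part (ii).

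For part (i), strong $K$-convexity of $F$ with $\bm\mu\in{\rm int}(K)$ ensures that $\mathcal{L}_{F}(x^{0})$ is bounded and that every $K$-stationary point is an efficient solution, so the $K$-steepest descent line search version of Assumption \ref{a1} holds; hence $\{x^k\}$ possesses an accumulation point $x^*$ which is $K$-stationary, and therefore efficient. The strict contraction established in (ii) shows $\{\|x^k-x^*\|\}$ is monotonically decreasing to some limit, and since a subsequence converges to $0$, the whole sequence converges to $x^*$. The main obstacle — and the only point where care is needed — is the dependence of $(\bm\ell_k,\bm\mu_k)$ on $k$, which blocks a direct citation of Theorem \ref{t3}; circumventing this requires the explicit stepsize lower bound $t_{\min}$ from Proposition \ref{p5.1}, which is precisely what converts the $k$-dependent bound $1-t_k\dual{c^{*}_{k},\bm\mu}$ into the uniform contraction factor $1-t_{\min}\mu_{\min}$.
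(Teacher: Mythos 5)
Your proposal is correct and follows essentially the same route the paper intends: its one-line proof defers to the arguments of Lemma \ref{l4.1} and Theorem \ref{t3}, i.e., instantiate the MM framework with the surrogate $G_{k,e/t_{k}}(\cdot)\in\mathcal{S}_{e/t_{k}-\bm\mu,\,e/t_{k}}(F,x^{k})$, substitute $x=x^{*}$ in Lemma \ref{l3.1}(ii), and bound the contraction factor via Proposition \ref{p5.1}. You in fact supply more detail than the paper does, correctly flagging and resolving the only delicate point (the $k$-dependence of the surrogate parameters, handled by the uniform lower bound $t_{k}\geq t_{\min}$).
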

\begin{proof}
	The assertions can be obtained by using the similar arguments as in the proof of Lemma \ref{l4.1}.
\end{proof}
\begin{remark}
	If $K=\mathbb{R}^{m}_{+}$, and $e=\mathbf{1}_{m}$, i.e., $C_{e}=\Delta_{m}$, the convergence rate in Lemma \ref{l5.1}(ii) reduces to those established in \citep[Theorem 4.2]{FVV2019} and \citep[Theorem 5.6]{ZDH2019}.
\end{remark}

\subsection{Generic first-order method for VOPs with line search}

To reduce the gap between $F(\cdot)-F(x^{k})$ and $G_{k,e/t_{\min}}(\cdot)$, we select $e_{k}\in{\rm int}(K)$ and devise the following generic first-order method:

\begin{algorithm}
	\caption{Generic first order method for VOPs with line search}\label{alg5}
	\LinesNumbered 
	\KwData{$x^{0}\in\mathbb{R}^{n},\gamma\in(0,1)$}
	\For{$k=0,1,...$}{Select $e_{k}\in{\rm int}(K)$\\
		  Update $d^{k}:=\mathop{\arg\min}_{d\in\mathbb{R}^{n}} \max_{c^{*}\in C_{e}}\dual{c^{*}, JF(x^{k})d+\frac{1}{2}\|d\|^{2}e_{k}}$\\
		\eIf{$d^{k}=0$}{ {\bf{return}} $K$-stationary point $x^{k}$ }{Compute the stepsize $t_{k}\in(0,1]$ in the following way:
			$$t_{k}:=\max\left\{\gamma^{j}:j\in\mathbb{N},F(x^{k}+\gamma^{j}d^{k})-F(x^{k})\preceq_{K}\gamma^{j}\left(JF(x^{k})d^{k}+\frac{1}{2}\nm{d^{k}}^{2}e_{k}\right)\right\}$$\\
			$x^{k+1}:= x^{k}+t_{k}d^{k}$}}  
\end{algorithm}
\par It is worth noting that we don't specify how to select $e_{k}$ in Algorithm \ref{alg5}. This naturally raises the question: what role does $e_{k}$ play in determining the convergence rate? Firstly, we derive the lower bound of stepsize in each iteration.
\begin{proposition}
	The stepsize generated in Algorithm \ref{alg5} satisfies $t_{k}\geq t_{k}^{\min}:=\min\left\{\min\limits_{c^{*}\in C_{e}}\frac{\gamma\dual{c^{*},e_{k}}}{\dual{c^{*},\bm\ell}},1\right\}$.
\end{proposition}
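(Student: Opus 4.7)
The plan is to mirror the argument used for Proposition \ref{p5.1}, with the scalar vector $e$ replaced by the iteration-dependent $e_{k}$, and to keep track of the fact that the maximization in the subproblem is still over the base $C_{e}$ (not $C_{e_{k}}$). First I would split on whether $t_{k}=1$, in which case the bound is trivial, or $t_{k}<1$, in which case the backtracking must have rejected the step at $t_{k}/\gamma$, meaning
$$F\!\left(x^{k}+\tfrac{t_{k}}{\gamma}d^{k}\right)-F(x^{k}) \not\preceq_{K} \tfrac{t_{k}}{\gamma}\left(JF(x^{k})d^{k}+\tfrac{1}{2}\nm{d^{k}}^{2}e_{k}\right).$$

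The negation of a $\preceq_{K}$ relation supplies some $c^{*}\in K^{*}\setminus\{0\}$ for which the scalarized inequality is reversed with a strict sign; because $C_{e}$ is a base of $K^{*}$, I can rescale and assume $c^{*}_{1}\in C_{e}$. Next I would invoke the $K$-smoothness of $F(\cdot)$ with step $\tfrac{t_{k}}{\gamma}d^{k}$ to get
$$F\!\left(x^{k}+\tfrac{t_{k}}{\gamma}d^{k}\right)-F(x^{k})\preceq_{K}\tfrac{t_{k}}{\gamma}JF(x^{k})d^{k}+\tfrac{1}{2}\nm{\tfrac{t_{k}}{\gamma}d^{k}}^{2}\bm\ell,$$
pair it with $c^{*}_{1}$, and combine with the previous strict inequality. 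The $\dual{c^{*}_{1},JF(x^{k})d^{k}}$ contributions cancel, leaving $\tfrac{t_{k}^{2}}{\gamma^{2}}\dual{c^{*}_{1},\bm\ell}>\tfrac{t_{k}}{\gamma}\dual{c^{*}_{1},e_{k}}$, i.e., $t_{k}>\gamma\dual{c^{*}_{1},e_{k}}/\dual{c^{*}_{1},\bm\ell}$. Taking the infimum over the (compact) set $C_{e}$ then gives $t_{k}\geq\min_{c^{*}\in C_{e}}\gamma\dual{c^{*},e_{k}}/\dual{c^{*},\bm\ell}$, and merging with the trivial case yields $t_{k}\geq t_{\min}$.

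The main obstacle, and essentially the only nontrivial point beyond routine bookkeeping, is justifying that the minimum $\min_{c^{*}\in C_{e}}\gamma\dual{c^{*},e_{k}}/\dual{c^{*},\bm\ell}$ is well-defined and strictly positive, so that $t_{\min}>0$. This rests on three observations: $C_{e}$ is compact (so the minimum is attained), $e_{k}\in\mathrm{int}(K)$ forces $\dual{c^{*},e_{k}}>0$ for every $c^{*}\in C_{e}\subset K^{*}\setminus\{0\}$, and the same reasoning with $\bm\ell\in\mathrm{int}(K)$ keeps the denominator bounded away from zero on $C_{e}$. Once this is in place the bound follows from the $K$-smoothness estimate exactly as in Proposition \ref{p5.1}, with no further complication introduced by allowing $e_{k}$ to vary with $k$.
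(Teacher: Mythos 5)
Your proposal is correct and follows exactly the route the paper intends: the paper omits an explicit proof and relies on "the same arguments as Proposition \ref{p5.1}," namely negating the rejected backtracking step at $t_{k}/\gamma$ to extract some $c^{*}_{1}\in C_{e}$, pairing it with the $K$-smoothness upper bound, and cancelling the common terms to obtain $t_{k}>\gamma\dual{c^{*}_{1},e_{k}}/\dual{c^{*}_{1},\bm\ell}$. Your additional care about compactness of $C_{e}$ and positivity of $\dual{c^{*},e_{k}}$ and $\dual{c^{*},\bm\ell}$ (from $e_{k},\bm\ell\in\mathrm{int}(K)$) is a welcome, if routine, completion of that argument.
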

\begin{proof}
	The result can be obtained by using the similar arguments as in the proof of Proposition \ref{p5.1}.
\end{proof}
We consider the following surrogate:
\begin{equation}\label{m4}
	G_{k,e_{k}/t_{k}}(x):=JF(x^{k})(x-x^{k})+\frac{1}{2t^{\min}_{k}}\nm{x-x^{k}}^{2}e_{k}.
\end{equation}
We can show that $G_{k,e_{k}/t^{\min}_{k}}(\cdot)$ is a non-majorizing surrogate function of $F(\cdot)-F(x^{k})$ near $x^{k}$.
\begin{proposition}
	Let $G_{k,e_{k}/t^{\min}_{k}}(\cdot)$ be defined as (\ref{m4}). Then, the following statements hold.
	\begin{itemize}
		\item[(i)] $F(x^{k+1})-F(x^{k})\preceq_{K}G_{k,e_{k}/t^{\min}_{k}}(x^{k+1})$..
		\item[(ii)] If $F(\cdot)$ is $K$-convex, then $G_{k,e_{k}/t^{\min}_{k}}(\cdot)\in\mathcal{S}_{e_{k}/t^{\min}_{k},e_{k}/t^{\min}_{k}}(F,x^{k})$.
		\item[(iii)] If $F(\cdot)$ is strongly $K$-convex with $\bm\mu\in{\rm int}(K)$, then $G_{k,e_{k}/t^{\min}_{k}}(\cdot)\in\mathcal{S}_{e_{k}/t^{\min}_{k}-\bm\mu,e_{k}/t^{\min}_{k}}(F,x^{k})$.
	\end{itemize}
\end{proposition}
\begin{proof}
	The assertions can be obtained by using the similar arguments as in the proof of Proposition \ref{p5.2}.
\end{proof}
\par The following results show that $e_{k}$ plays a significant role in the convergence rate of Algorithm \ref{alg5}.  
\begin{lemma}\label{l5.2}
	Assume that $F(\cdot)$ is strongly $K$-convex with $\bm\mu\in{\rm int}(K)$, let $\{x^{k}\}$ be the sequence generated by Algorithm \ref{alg5}. Then, the following statements hold:
	\begin{itemize}
		\item[(i)] $\{x^{k}\}$ converges to an efficient solution $x^{*}$ of (\ref{VOP}).
		\item[(ii)] $\nm{x^{k+1}-x^{*}}\leq\sqrt{1-\min\limits_{c^{*}\in C_{e}}\frac{\dual{c^{*},\bm\mu}}{\dual{c^{*},e_{k}/t^{\min}_{k}}}}\nm{x^{k}-x^{*}},~\forall k\geq0$.
		\item[(iii)] If $e_{k}=e$, we have $$\nm{x^{k+1}-x^{*}}\leq\sqrt{1-t_{\min}{\mu_{\min}}}\nm{x^{k}-x^{*}},~\forall k\geq0,$$ where $\mu_{\min}:=\min_{c^{*}\in C_{e}}\dual{c^{*},\bm\mu}$.
		\item[(iv)] For any $e_{k}\in\mathrm{int}K$, we have
		\begin{equation}\label{cr1}
		\min\limits_{c^{*}\in C_{e}}\frac{\dual{c^{*},\bm\mu}}{\dual{c^{*},e_{k}/t^{\min}_{k}}}\leq\frac{\gamma}{\kappa_{F,\preceq_{K}}}.	
		\end{equation}
		
Moreover, the equality holds with $e_{k}=\bm\mu$ or $e_{k}=\bm\ell$. In these cases, we have
\begin{equation}\label{cr2}
	\nm{x^{k+1}-x^{*}}\leq\sqrt{1-{\gamma}/{\kappa_{F,\preceq_{K}}}}\nm{x^{k}-x^{*}},~\forall k\geq0.	
\end{equation}
	\end{itemize}
\end{lemma}
\begin{proof}
	The assertions (i) and (ii) can be obtained by using the similar arguments as in the proof of Lemma \ref{l4.1}. By substituting $e_{k}=e$ into (ii), we can obtain the assertion (iii). Next, we prove the assertion (iv). Since $C_{e}$ is a compact set, there exists a vector $c_{0}^{*}\in C_{e}$ such that $1/\kappa_{F,\preceq_{K}}={\dual{c_{0}^{*},\bm\mu}}/{\dual{c_{0}^{*},\bm\ell}}.$ On the other hand, by the definition of $t_{k}^{\min}$ we can deduce 
	$$\min\limits_{c^{*}\in C_{e}}\frac{\dual{c^{*},\bm\mu}}{\dual{c^{*},e_{k}/t^{\min}_{k}}}\leq\frac{\dual{c_{0}^{*},\bm\mu}}{\dual{c_{0}^{*},e_{k}}}\frac{\gamma\dual{c_{0}^{*},e_{k}}}{\dual{c_{0}^{*},\bm\ell}}=\frac{\gamma}{\kappa_{F,\preceq_{K}}}.$$ 
	Then the relation (\ref{cr1}) follows. The equality can be obtain by substituting $e_{k}=\bm\mu$ or $e_{k}=\bm\ell$ into the left-hand side of (\ref{cr1}) . Moreover, The equality leads to the relation (\ref{cr2}).
\end{proof}
\begin{remark}
	As described in Lemma \ref{l5.2}(iv), by setting $e_{k}=\bm\mu$ or $e_{k}=\bm\ell$, we can derive the optimal linear convergence rate for Algorithm \ref{alg5}, and the convergence rate reduces to that of Lemma \ref{l4.2.1}(ii) with constant $\gamma$. Intuitively, to explore the local curvature information of $F(\cdot)$,  we can devise a tighter local surrogate $G_{k,e_{k}/t_{k}}(\cdot)$ with $\bm\mu\preceq_{K}e_{k}\preceq_{K}\bm\ell$. In this case, the performance of Algorithm \ref{alg5} can be further improved by using a tighter local surrogate $G_{k,e_{k}/t_{k}}(\cdot)$.
\end{remark}
To narrow the surrogate gap and better capture the local curvature information, we compute $e_{k}$ by Barzilai-Borwein method, namely, we set
\begin{equation}\label{ek}
	e_{k}:=\frac{\dual{JF(x^{k})-JF(x^{k-1}),x^{k}-x^{k-1}}}{\nm{x^{k}-x^{k-1}}^{2}}.
\end{equation}
\begin{lemma}
	Assume that $F(\cdot)$ is strongly $K$-convex with $\bm\mu\in{\rm int}(K)$, let $\{x^{k}\}$ be the sequence generated by Algorithm \ref{alg5}, where $e_{k}$ is defined as in (\ref{ek}). Then, the following statements hold:
	\begin{itemize}
		\item[(i)]  $\bm\mu\preceq_{K}e_{k}\preceq_{K}\bm\ell$;
		\item[(ii)] $t_{k}\geq\min_{c^{*}\in C_{e}}\{\gamma\dual{c^{*},e_{k}}/\dual{c^{*},\bm\ell}\}$;
	\end{itemize}
\end{lemma}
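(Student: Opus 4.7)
The plan is to derive (i) from the gradient characterizations of strong $K$-convexity and $K$-smoothness of $F(\cdot)$, and then to obtain (ii) as a direct consequence of (i) by showing that the bound $e_{k}\preceq_{K}\bm\ell$ forces the ``$1$'' in the minimum of the stepsize lower bound from the proposition immediately preceding this lemma to be inactive.

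For part (i), I would apply the strong $K$-convexity inequality
$$\frac{1}{2}\nm{x-y}^{2}\bm\mu + JF(x)(y-x) \preceq_{K} F(y) - F(x)$$
twice, once as written and once with $x$ and $y$ swapped. Adding the two inequalities cancels the $F$-terms and yields
$$\nm{x-y}^{2}\bm\mu \preceq_{K} (JF(x)-JF(y))(x-y).$$
Specializing to $x=x^{k}$ and $y=x^{k-1}$ and dividing by $\nm{x^{k}-x^{k-1}}^{2}$ gives $\bm\mu \preceq_{K} e_{k}$. An entirely analogous argument using the $K$-smoothness inequality
$$F(y)-F(x) \preceq_{K} JF(x)(y-x) + \frac{1}{2}\nm{x-y}^{2}\bm\ell$$
produces $(JF(x)-JF(y))(x-y) \preceq_{K} \nm{x-y}^{2}\bm\ell$, and hence $e_{k} \preceq_{K} \bm\ell$.

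For part (ii), the preceding proposition already guarantees
$$t_{k} \geq \min\left\{\min_{c^{*}\in C_{e}}\frac{\gamma\dual{c^{*},e_{k}}}{\dual{c^{*},\bm\ell}},\ 1\right\},$$
so it suffices to check that the inner minimum does not exceed $1$. The relation $e_{k}\preceq_{K}\bm\ell$ from (i) gives $\dual{c^{*},e_{k}} \leq \dual{c^{*},\bm\ell}$ for every $c^{*}\in C_{e}\subset K^{*}$, while $\bm\mu\in{\rm int}(K)$ combined with $\bm\mu\preceq_{K} e_{k}$ guarantees $\dual{c^{*},e_{k}}>0$. Hence $\gamma\dual{c^{*},e_{k}}/\dual{c^{*},\bm\ell} \leq \gamma < 1$, so the ``$1$'' is inactive and the claimed bound follows.

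The main obstacle is conceptual rather than technical: recognizing that the symmetrization trick of adding the two first-order inequalities in opposite orientations is the right move to convert the partial-order curvature bounds into a two-sided sandwich on the matrix-vector quantity $(JF(x^{k})-JF(x^{k-1}))(x^{k}-x^{k-1})$. Once this is in place, the positivity claim needed for (ii) follows automatically from $\bm\mu\in{\rm int}(K)$, and the proof collapses to a single line.
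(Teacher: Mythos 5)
Your proposal is correct and follows essentially the same route the paper intends: part (i) via the symmetrized first-order characterizations of strong $K$-convexity and $K$-smoothness applied to $x^{k}$ and $x^{k-1}$ (which is exactly what the paper's terse ``follows by the definitions'' claim amounts to), and part (ii) by the line-search argument of Proposition \ref{p5.1}, with your observation that $e_{k}\preceq_{K}\bm\ell$ makes the ``$1$'' in the generic bound inactive being a welcome extra bit of care. No gaps.
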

\begin{proof}
	Assertion (i) follows by the strong $k$-convexity and $K$-smoothness of $F(\cdot)$, and the definition of $e_{k}$. We can obtain the assertion (ii) by using the similar arguments as in the proof of Proposition \ref{p5.1}.
\end{proof}
\begin{remark}\label{r5.3}
	To reduce per-iteration cost, we set $e=e_{k}$, i.e., $C_e=C_{e_{k}}$, so that the coupled subproblem in Algorithm \ref{alg5} can be reformulated into the following seperate form:
	$$\min_{d\in\mathbb{R}^{n}} \max_{c^{*}\in C_{e_{k}}}\dual{c^{*}, JF(x^{k})d}+\frac{1}{2}\|d\|^{2}.$$ Hence, we conclude that using a variable $C_{e_{k}}$ serves as an appropriate choice of base in SDVO, which provides a theoretical answer to (\ref{Q}). For $K=\mathbb{R}^{2}_{+}$, Fig. \ref{fig1} illustrates the choice of the base under the assumption of strong convexity. It suggests that bases should be adaptively selected from the pink region according to (\ref{ek}).
	\begin{figure}[htbp]
		\begin{center}	
			\includegraphics[scale=0.22]{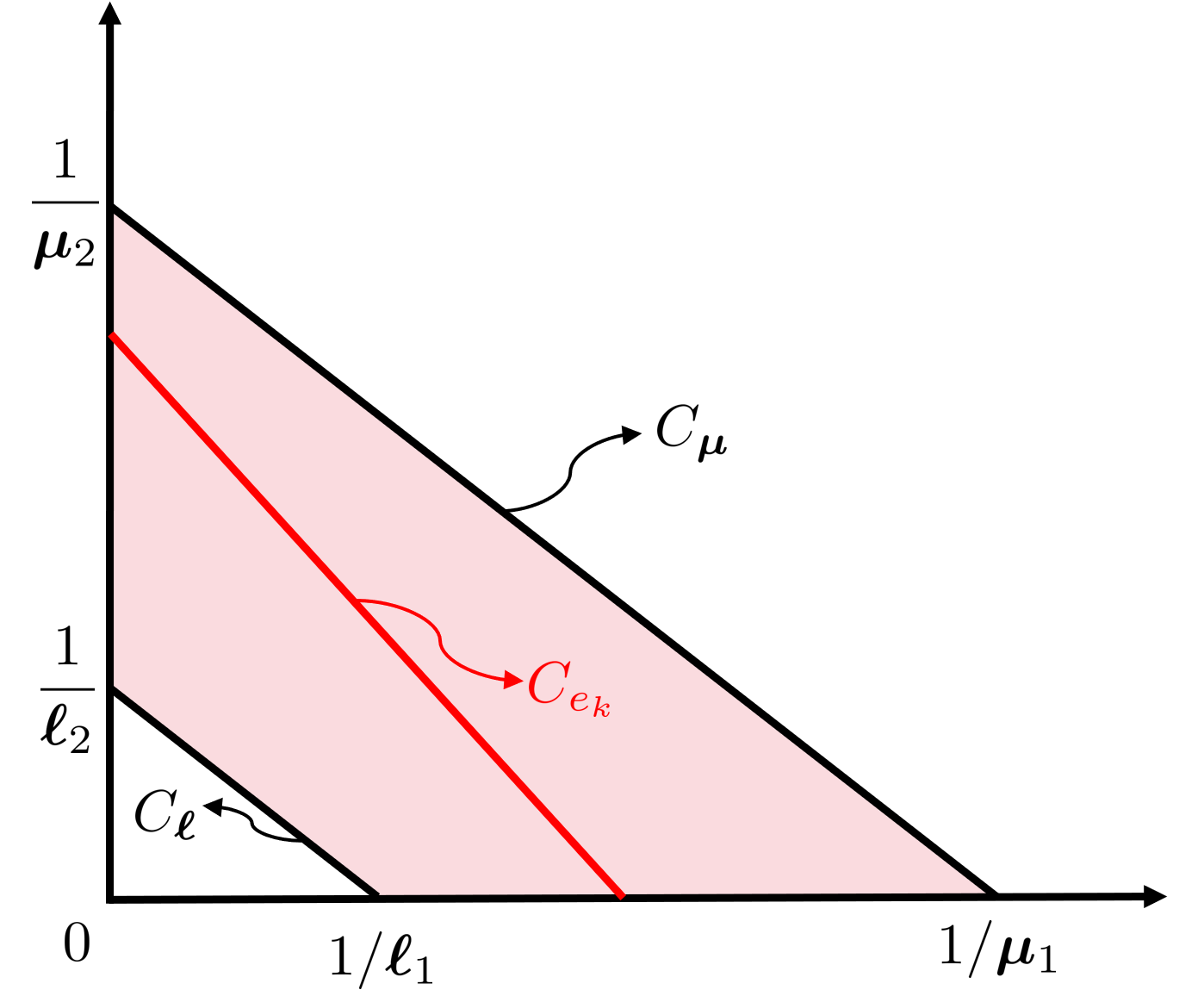} 
		\end{center}
		\caption{Illustration of the $C_{e_{k}}$ of $K=\mathbb{R}^{2}_{+}$.}\label{fig1}
		
	\end{figure}
\end{remark}

\section{First-order methods for VOPs with polyhedral cones}\label{sec6}
In this section, we consider the VOPs that $K$ is a polyhedral cone with nonempty interior. Without loss of generality, for the polyhedral cone $K$, there exists a transform matrix $A\in\mathbb{R}^{l\times m}$ with $m\leq l$ such that
$$K:=\{x\in\mathbb{R}^{m}:0\preceq Ax\}.$$
In this case, for any $a,b\in\mathbb{R}^{m}$, $a\preceq_{K}b$ can be equivalently represented as $Aa\preceq Ab$. Denote $A_{i}$ the $i$-th row vector of $A$. For the polyhedral cone $K$, we denote the set of transform matrices as follows:
$$\mathcal{A}:=\left\{A\in\mathbb{R}^{l\times m}:AK=\mathbb{R}^{l}_{+}\right\}.$$
Notably, in practice, the polyhedral cone $K$ is often defined by a specific transform matrix $A$. This raises a crucial question: does the transform matrix $A$ affect the performance of descent methods for VOPs with polyhedral cones? 
\subsection{Steepest descent method for VOPs with polyhedral cones}
By using a transform matrix $A\in\mathcal{A}$, the steepest descent direction subproblem for VOPs with polyhedral cones is formulated as follows:
\begin{equation}\label{sdp}
	\min\limits_{d\in\mathbb{R}^{n}}\max\limits_{\lambda\in\Delta_{l}}\dual{\lambda,AJF(x^{k})d}+\frac{1}{2}\nm{d}^{2}.
\end{equation}
The complete $K$-steepest descent method for VOPs with polyhedral cones is described as follows:

\begin{algorithm}
	\caption{K-steepest descent method for VOPs with polyhedral cones}\label{alg7}
	\LinesNumbered 
	\KwData{$x^{0}\in\mathbb{R}^{n},\gamma\in(0,1)$}
	Select a transform matrix $A\in\mathcal{A}$\\
	\For{$k=0,1,...$}{
		Update $d^{k}$ as the minimizer of (\ref{sdp})\\
		\eIf{$d^{k}=0$}{ {\bf{return}} $K$-stationary point $x^{k}$ }{Compute the stepsize $t_{k}\in(0,1]$ in the following way:
			$$t_{k}:=\max\left\{\gamma^{j}:j\in\mathbb{N},A(F(x^{k}+\gamma^{j}d^{k})-F(x^{k}))\preceq\gamma^{j} \left(AJF(x^{k})d^{k}+\frac{1}{2}\nm{d^{k}}^{2}\mathbf{1}_{l}\right)\right\}$$
			$x^{k+1}:= x^{k}+t_{k}d^{k}$}}  
\end{algorithm}

We consider the following surrogate:
\begin{equation}\label{m5}
	G_{k,A,\mathbf{1}_{l}/t_{k}}(x):=AJF(x^{k})(x-x^{k})+\frac{1}{2t_{k}}\nm{x-x^{k}}^{2}\mathbf{1}_{l}.
\end{equation}
We can show that $G_{k,A,\mathbf{1}_{l}/t_{k}}(\cdot)$ is a non-majorizing surrogate function of $A(F(\cdot)-F(x^{k}))$ near $x^{k}$.
\begin{proposition}
	Let $G_{k,A,\mathbf{1}_{l}/t_{k}}(\cdot)$ be defined as (\ref{m5}). Then, the following statements hold.
	\begin{itemize}
		\item[(i)] $A(F(x^{k+1})-F(x^{k}))\preceq G_{k,A,\mathbf{1}_{l}/t_{k}}(x^{k+1})$.
		\item[(ii)] If $F(\cdot)$ is $K$-convex, then $G_{k,A,\mathbf{1}_{l}/t_{k}}(\cdot)\in\mathcal{S}_{\mathbf{1}_{l}/t_{k},\mathbf{1}_{l}/t_{k}}(AF,x^{k})$.
		\item[(iii)] If $F(\cdot)$ is strongly $K$-convex with $\bm\mu\in{\rm int}(K)$, then $G_{k,A,\mathbf{1}_{l}/t_{k}}(\cdot)\in\mathcal{S}_{\mathbf{1}_{l}/t_{k}-A\bm\mu,\mathbf{1}_{l}/t_{k}}(AF,x^{k})$.
	\end{itemize}
\end{proposition}
\begin{proof}
	The assertions can be obtained by using the similar arguments as in the proof of Proposition \ref{p5.2}.
\end{proof}

\begin{lemma}\label{l5.4}
	Assume that $F(\cdot)$ is strongly $K$-convex with $\bm\mu\in{\rm int}(K)$, where $K=\{x\in\mathbb{R}^{m}:0\preceq Ax\}.$ Let $\{x^{k}\}$ be the sequence generated by Algorithm \ref{alg7}. Then, the following statements hold:
	\begin{itemize}
		\item[(i)] $t_{k}\geq\min\{\min_{i\in [l]}\{\gamma/\dual{A_{i},\bm\ell}\},1\}$;
		\item[(ii)] $\{x^{k}\}$ converges to an efficient solution $x^{*}$ of (\ref{VOP});
		\item[(iii)] $\nm{x^{k+1}-x^{*}}\leq\sqrt{1-t_{k}\min_{i\in [l]}\dual{A_{i},\bm\mu}}\nm{x^{k}-x^{*}},~\forall k\geq0$.
	\end{itemize}
\end{lemma}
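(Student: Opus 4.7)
The plan is to recognize Algorithm \ref{alg7} as an instance of Algorithm \ref{alg4} lifted through $A$: namely, applied to the vector-valued function $\tilde F := AF$ under the cone $\mathbb{R}^{l}_{+}$ with reference vector $e = (1,\dots,1)^{T}$. Since $A \in \mathcal{A}$ satisfies $AK = \mathbb{R}^{l}_{+}$, the partial orders correspond ($a \preceq_{K} b$ iff $Aa \preceq Ab$), so $A\bm\mu \in \mathrm{int}(\mathbb{R}^{l}_{+})$, $\tilde F$ is strongly $\mathbb{R}^{l}_{+}$-convex with modulus $A\bm\mu$, and $\tilde F$ is $\mathbb{R}^{l}_{+}$-smooth with $A\bm\ell$. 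Under this identification $C_{e} = \Delta_{l}$, so the subproblem (\ref{sdp}) is exactly the direction-finding subproblem of Algorithm \ref{alg4} for $\tilde F$, and the test in Algorithm \ref{alg7} is the componentwise Armijo-type condition of Algorithm \ref{alg4} for $\tilde F$.

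For assertion (i), I would argue directly from the failure of the line search at $t_{k}/\gamma$: there exists $i \in [l]$ with
\begin{equation*}
\dual{A_{i},\, F(x^{k} + (t_{k}/\gamma)d^{k}) - F(x^{k})} > (t_{k}/\gamma)\bigl(\dual{A_{i},\, JF(x^{k})d^{k}} + \tfrac{1}{2}\nm{d^{k}}^{2}\bigr).
\end{equation*}
Pairing $A_{i}$ with the $K$-smoothness inequality $F(y) - F(x) \preceq_{K} JF(x)(y-x) + \tfrac{1}{2}\nm{y-x}^{2}\bm\ell$ at $y = x^{k} + (t_{k}/\gamma)d^{k}$ yields an upper bound with quadratic term $\tfrac{1}{2}(t_{k}/\gamma)^{2}\nm{d^{k}}^{2}\dual{A_{i}, \bm\ell}$. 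Chaining the two, cancelling the linear contributions, and dividing by $\tfrac{1}{2}(t_{k}/\gamma)\nm{d^{k}}^{2} > 0$, the inequality collapses to $(t_{k}/\gamma)\dual{A_{i}, \bm\ell} > 1$, so $t_{k} > \gamma/\dual{A_{i}, \bm\ell} \geq \min_{i \in [l]} \gamma/\dual{A_{i}, \bm\ell}$.

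For (ii) and (iii), I would invoke Theorem \ref{t3} on the lifted problem through the (non-majorizing) surrogate $\tilde G_{k}(x) := AJF(x^{k})(x - x^{k}) + \tfrac{1}{2t_{k}}\nm{x - x^{k}}^{2}e$, the lifted analog of (\ref{m3}). The analog of Proposition \ref{p4.1}(iii) for $\tilde F$ yields $\tilde G_{k} \in \mathcal{S}_{e/t_{k} - A\bm\mu,\, e/t_{k}}(\tilde F, x^{k})$: condition (b) of the surrogate definition is exactly the Armijo test satisfied by $x^{k+1}$, while the residual's smoothness parameter $e/t_{k} - A\bm\mu$ follows from Lemma \ref{l2.2}(iii). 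Theorem \ref{t3} then gives convergence of $\{x^{k}\}$ to an $\mathbb{R}^{l}_{+}$-efficient solution $x^{*}$ of $\tilde F$, which is $K$-efficient for $F$ by the order correspondence, together with the contraction
\begin{equation*}
\nm{x^{k+1}-x^{*}} \leq \sqrt{\max_{\lambda \in \Delta_{l}} \frac{\dual{\lambda,\, e/t_{k} - A\bm\mu}}{\dual{\lambda,\, e/t_{k}}}}\,\nm{x^{k}-x^{*}} = \sqrt{1 - t_{k}\min_{i \in [l]} \dual{A_{i}, \bm\mu}}\,\nm{x^{k}-x^{*}},
\end{equation*}
using $\dual{\lambda, e} = 1$ and $\dual{\lambda, A\bm\mu} = \sum_{i}\lambda_{i}\dual{A_{i}, \bm\mu}$ on $\Delta_{l}$. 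This is exactly (iii), while (ii) is Theorem \ref{t3}(i).

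The main technical care is in the cone lifting itself: verifying that $A\bm\mu$ and $e/t_{k} - A\bm\mu$ lie in $\mathrm{int}(\mathbb{R}^{l}_{+})$, that the surrogate class $\mathcal{S}_{\cdot,\cdot}$ transfers cleanly under $A$, and that $K$-efficiency of $F$ coincides with $\mathbb{R}^{l}_{+}$-efficiency of $\tilde F$. Once these identifications are checked, all three conclusions follow immediately from the Section \ref{sec3} machinery.
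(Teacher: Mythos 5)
Your proposal is correct and follows essentially the same route as the paper, which proves the lemma by pointing to the arguments of Proposition \ref{p5.1} (for the stepsize bound) and Lemma \ref{l4.1} (for convergence via the surrogate machinery of Theorem \ref{t3}); your lifting through $A$ is just the paper's own observation, recorded in the remark after the lemma, that choosing a transform matrix amounts to choosing the base $\mathrm{conv}\{A_{i}, i\in[l]\}$ of $K^{*}$. The only cosmetic slips are the reference to a nonexistent ``condition (b)'' of the surrogate definition (you mean condition (i)) and the unhandled trivial case $t_{k}=1$ in part (i), neither of which affects the argument.
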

\begin{proof}
	The assertions can be obtained by using the similar arguments as in the proof of Proposition \ref{p5.1} and Lemma \ref{l4.1}.
\end{proof}
\begin{remark}
	 The subproblem (\ref{sdp}) can be reformulated as 
	$$\min\limits_{d\in\mathbb{R}^{n}}\max\limits_{c^{*}\in C}\dual{c^{*},JF(x^{k})d}+\frac{1}{2}\nm{d}^{2},$$
	where $C:=conv\{A_{i},i\in[l]\}$ is a base of $K^{*}$.  In other words, selecting a transform matrix in (\ref{sdp}) is equivalent to selecting a base of  $K^{*}$. Consequently, the linear convergence rate of Algorithm \ref{alg7} is sensitive to the choice of $A$.  When $K=\mathbb{R}^{m}_{+}$, the steepest descent method \citep{FS2000} fixs $A=I_{m}$, i.e., $C=\Delta_{m}$. 
\end{remark}
\subsection{Barzilai-Borwein descent method for VOPs with polyhedral cones}
 In general, for the $e_{k}$ defined as (\ref{ek}) we have $e_{k}\preceq_{K}\bm\ell$, which can be written as $Ae_{k}\preceq_{}A\bm\ell$. We denote $\alpha^{k}\in\mathbb{R}^{l}_{++}$ as follows:
\begin{equation}\label{ak}
	\alpha_{i}^{k}=\left\{
	\begin{aligned}
		&\max\left\{\alpha_{\min},\min\left\{\frac{\langle s_{k-1},y^{k-1}_{i}\rangle}{\nm{s_{k-1}}^{2}}, \alpha_{\max}\right\}\right\}, & \langle s_{k-1},y^{k-1}_{i}\rangle&>0, \\
		&\max\left\{\alpha_{\min},\min\left\{\frac{\nm{y^{k-1}_{i}}}{\nm{s_{k-1}}}, \alpha_{\max}\right\}\right\}, & \langle s_{k-1},y^{k-1}_{i}\rangle&<0, \\
		& \alpha_{\min}, &  \langle s_{k-1},y^{k-1}_{i}\rangle&=0,
	\end{aligned}
	\right.
\end{equation}
for all $i\in[l]$, where $s_{k-1}=x^{k}-x^{k-1},\ y^{k-1}_{i}$ is the $i$-th row vector of $A(JF(x^{k})-JF(x^{k-1}))$,  $\alpha_{\max}$ is a sufficient large positive constant and $\alpha_{\min}$ is a sufficient small positive constant. 
The Barzilai-Borwein descent direction is defined as the minimizer of 
\begin{equation}\label{bb1}
	\min\limits_{d\in\mathbb{R}^{n}}\max\limits_{\lambda\in\Delta_{l}}\dual{\lambda,AJF(x^{k})d+\frac{1}{2}\nm{d}^{2}\alpha^{k}}.
\end{equation}
Alternatively, we use the similar strategy in Algorithm \ref{isd2}, the Barzilai-Borwein descent direction subproblem can be rewritten equivalently as the following coupled form:
\begin{equation}\label{bb2}
	\min\limits_{d\in\mathbb{R}^{n}}\max\limits_{\lambda\in\Delta^{\alpha^{k}}_{l}}\dual{\lambda,AJF(x^{k})d}+\frac{1}{2}\nm{d}^{2},
\end{equation}
where $\Delta^{\alpha^{k}}_{l}:=\{c^{*}\in\mathbb{R}^{l}_{+}:\dual{c^{*},\alpha^{k}}=1\}$. The subproblem can be reformulated as follows:
\begin{equation}\label{bbk}
	\min\limits_{d\in\mathbb{R}^{n}}\max\limits_{\lambda\in\Delta_{l}}\dual{\lambda,\Lambda^{k} AJF(x^{k})d}+\frac{1}{2}\nm{d}^{2},
\end{equation} 
where $$\Lambda^{k}:=
\begin{bmatrix}
	\frac{1}{\alpha^{k}_{1}}   & &  \\
	& \ddots&\\
	&  &\frac{1}{\alpha^{k}_{l}}
\end{bmatrix}.
$$
By Sion's minimax theorem \citep{S1958}, the minimizer of (\ref{bbk}) can be written as 
$$d^{k}=-(\Lambda^{k} AJF(x^{k}))^{T}\lambda^{k},$$
where $\lambda^{k}\in\Delta_{l}$ is a solution of the following dual problem:
\begin{align*}\tag{DP}\label{DP}
	&\min\limits_{\lambda\in\Delta_{l}}\frac{1}{2} \left\|(\Lambda_{k} AJF(x^{k}))^{T}\lambda\right\|^{2}.
\end{align*}
\begin{remark}\label{r6.2}
	In general, the dual problem (\ref{DP}) is a lower dimensional quadratic programming with unit simplex constraint (the vertices of unit simplex constraint are known), then it can be solved by Frank-Wolfe/conditional gradient method efficiently \citep[see, e.g.,][]{SK2018,CTY2023a}. However, dual problem of (\ref{bb1}) reads as
	$$\min\limits_{\lambda\in\Delta_{l}}\frac{1}{2}\frac{\left\|( AJF(x^{k}))^{T}\lambda\right\|^{2}}{\sum\limits_{i\in[l]}\lambda_{i}\alpha^{k}_{i}},$$ which is not easy to solve. 
\end{remark}

\begin{remark}\label{r6.3}
If $K=\mathbb{R}^{m}_{+}$ and $A=I_{m}$, the subproblem (\ref{bbk}) reduces to that of the BBDMO \citep{CTY2023a}. Consequently, transforming (\ref{bb1}) into (\ref{bbk}) gives a new insight into BBDMO from a majorization-minimization perspective. Comparing the subproblems (\ref{sdp}) and (\ref{bb2}), it turns out that the differences between the directions of the steepest descent and the Barzilai-Borwein descent lie in the choice of base for $K^{*}$. 
	
\end{remark}

The complete $K$-Barzilai-Borwein descent method for VOPs  with polyhedral cones is described as follows:
\begin{algorithm}
	\caption{K-Barzilai-Borwein descent method for VOPs with polyhedral cones}\label{alg6}
	\LinesNumbered 
	\KwData{$x^{0}\in\mathbb{R}^{n},\gamma\in(0,1)$}
	Select a transform matrix $A\in\mathcal{A}$\\
	Choose $x^{-1}$ in a small neighborhood of $x^{0}$\\
	\For{$k=0,1,...$}{  Update $\alpha^{k}$ as (\ref{ak})\\
		Update $d^{k}$ as the minimizer of (\ref{bbk})\\
		\eIf{$d^{k}=0$}{ {\bf{return}} $K$-stationary point $x^{k}$ }{Compute the stepsize $t_{k}\in(0,1]$ in the following way:
			$$t_{k}:=\max\left\{\gamma^{j}:j\in\mathbb{N},A(F(x^{k}+\gamma^{j}d^{k})-F(x^{k}))\preceq\gamma^{j} \left(AJF(x^{k})d^{k}+\frac{1}{2}\nm{d^{k}}^{2}\alpha^{k}\right)\right\}$$\\
			$x^{k+1}:= x^{k}+t_{k}d^{k}$}}  
\end{algorithm}

We consider the following surrogate:
\begin{equation}\label{m6}
	G_{k,A,\alpha^{k}/t_{k}}(x):=AJF(x^{k})(x-x^{k})+\frac{1}{2t_{k}}\nm{x-x^{k}}^{2}\alpha^{k}.
\end{equation}
We can show that $G_{k,A,\alpha^{k}/t_{k}}(\cdot)$ is a non-majorizing surrogate function of $A(F(\cdot)-F(x^{k}))$ near $x^{k}$.
\begin{proposition}
	Let $G_{k,A,\alpha^{k}/t_{k}}(\cdot)$ be defined as (\ref{m6}). Then, the following statements hold.
	\begin{itemize}
		\item[(i)] $A(F(x^{k+1})-F(x^{k}))\preceq G_{k,A,\alpha^{k}/t_{k}}(x^{k+1})$.
		\item[(ii)] If $F(\cdot)$ is $K$-convex, then $G_{k,A,\alpha^{k}/t_{k}}(\cdot)\in\mathcal{S}_{\alpha^{k}/t_{k},\alpha^{k}/t_{k}}(AF,x^{k})$.
		\item[(iii)] If $F(\cdot)$ is strongly $K$-convex with $\bm\mu\in{\rm int}(K)$, then $G_{k,A,\alpha^{k}/t_{k}}(\cdot)\in\mathcal{S}_{\alpha^{k}/t_{k}-A\bm\mu,\alpha^{k}/t_{k}}(AF,x^{k})$.
	\end{itemize}
\end{proposition}
\begin{proof}
	The assertions can be obtained by using the similar arguments as in the proof of Proposition \ref{p5.2}.
\end{proof}

\begin{lemma}\label{l6.2}
	Assume that $F(\cdot)$ is strongly $K$-convex with $\bm\mu\in{\rm int}(K)$, where $K=\{x\in\mathbb{R}^{m}:0\preceq Ax\}.$ Let $\{x^{k}\}$ be the sequence generated by Algorithm \ref{alg6}. Then, the following statements hold:
	\begin{itemize}
		\item[(i)]  $A\bm\mu\preceq\alpha^{k}\preceq A\bm\ell$;
		\item[(ii)] $t_{k}\geq\min_{i\in [l]}\{\gamma\alpha^{k}_{i}/\dual{A_{i},\bm\ell}\}$;
		\item[(iii)] $\{x^{k}\}$ converges to an efficient solution $x^{*}$ of (\ref{VOP});
		\item[(iv)] $\nm{x^{k+1}-x^{*}}\leq\sqrt{1-t_{k}\min_{i\in [l]}\{{\dual{A_{i},\bm\mu}}/{\alpha^{k}_{i}}\}}\nm{x^{k}-x^{*}},~\forall k\geq0$.
	\end{itemize}
\end{lemma}
\begin{proof}
	The assertions can be obtained by using the similar arguments as in the proof of Proposition \ref{p5.1} and Lemma \ref{l4.1}.
\end{proof}
A large stepsize may speed up the convergence of Algorithm \ref{alg6}. Accordingly, the Armijo line search can be applied, namely, compute the stepsize $t_{k}\in(0,1]$ in the following way:
\begin{equation}\label{armijo}
	t_{k}:=\max\left\{\gamma^{j}:j\in\mathbb{N},A(F(x^{k}+\gamma^{j}d^{k})-F(x^{k}))\preceq\sigma\gamma^{j} AJF(x^{k})d^{k}\right\},
\end{equation}
where $\sigma\in(0,1)$.

The following result shows that the convergence rate of Algorithm \ref{alg6} is not sensitive to the choice of transform matrix. More specifically, the descent direction $d^{k}$ and stepsize $t_{k}$ of Algorithm \ref{alg6} are invariant for some $A\in\mathcal{A}$. 
\begin{proposition}(Affine Invariance)
	Let $A^{1},A^{2}\in \mathcal{A}$, $d^{k}_{1},t_{k}^{1}$ and $d^{k}_{2},t_{k}^{2}$ be the descent directions and stepsize generated by Algorithm \ref{alg6} with $A^{1}$ and $A^{2}$, respectively. If $\alpha_{\min}<\alpha_{i}^{k,1},\alpha_{i}^{k,2}<\alpha_{\max}$, $i\in[l]$, we have $d^{k}_{1}=d^{k}_{2}$ and $t_{k}^{1}=t_{k}^{2}$.
\end{proposition}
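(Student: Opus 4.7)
The plan is to show that, under the strong $K$-convexity setting carried over from Lemma \ref{l5.3}, both the direction subproblem and the Armijo test in Algorithm \ref{alg6} admit reformulations involving only $K$, $F$, and the iterates $x^{k-1},x^{k}$, so the dependence on the particular $A\in\mathcal{A}$ disappears once $\alpha^{k}$ is unpacked. The first step is to verify that the hypothesis $\alpha_{\min}<\alpha^{k,j}_{i}<\alpha_{\max}$ places us in the first branch of (\ref{ak}) for both runs $j=1,2$: strong $K$-convexity of $F(\cdot)$ yields $(JF(x^{k})-JF(x^{k-1}))s_{k-1}\succeq_{K} \bm\mu\nm{s_{k-1}}^{2}$, hence $\dual{s_{k-1},y^{k-1}_{i}}=\dual{A_{i},v}\geq \dual{A_{i},\bm\mu}\nm{s_{k-1}}^{2}>0$, where $v:=(JF(x^{k})-JF(x^{k-1}))s_{k-1}$ is $A$-free; therefore $\alpha^{k,j}_{i}=\dual{A^{j}_{i},v}/\nm{s_{k-1}}^{2}$ in both runs.

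For the direction, I would rewrite the subproblem in the equivalent form (\ref{bb2}) via the change of variable $c^{*}=A^{T}\lambda$. The constraint $\dual{\lambda,\alpha^{k}}=1$ becomes $\dual{c^{*},v}=\nm{s_{k-1}}^{2}$ and the inner objective becomes $\dual{c^{*},JF(x^{k})d}$. Since $A\in\mathcal{A}$, the rows of $A$ generate $K^{*}$, so $A^{T}\mathbb{R}^{l}_{+}=K^{*}$; combined with $v\in\mathrm{int}(K)$ (which makes the slice $\{c^{*}\in K^{*}:\dual{c^{*},v}=\nm{s_{k-1}}^{2}\}$ a compact base of $K^{*}$), the supremum of the linear functional $c^{*}\mapsto\dual{c^{*},JF(x^{k})d}$ over $A^{T}\mathbb{R}^{l}_{++}$ agrees with its max over that closure. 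The subproblem therefore reduces to $\min_{d\in\mathbb{R}^{n}}\max_{c^{*}\in K^{*},\,\dual{c^{*},v}=\nm{s_{k-1}}^{2}}\dual{c^{*},JF(x^{k})d}+\tfrac{1}{2}\nm{d}^{2}$, whose unique minimizer is determined by $JF(x^{k})$, $v$, and $K$ alone; hence $d^{k}_{1}=d^{k}_{2}$.

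For the stepsize, substituting $\alpha^{k}_{i}=\dual{A_{i},v}/\nm{s_{k-1}}^{2}$ rewrites the Armijo condition in Algorithm \ref{alg6} as $\dual{A_{i},F(x^{k}+\gamma^{j}d^{k})-F(x^{k})-\gamma^{j}JF(x^{k})d^{k}-\tfrac{\gamma^{j}\nm{d^{k}}^{2}}{2\nm{s_{k-1}}^{2}}v}\leq 0$ for every $i\in[l]$. Because $K=\{x:Ax\succeq 0\}$, this componentwise inequality is equivalent to the $A$-free relation $F(x^{k}+\gamma^{j}d^{k})-F(x^{k})\preceq_{K}\gamma^{j}JF(x^{k})d^{k}+\tfrac{\gamma^{j}\nm{d^{k}}^{2}}{2\nm{s_{k-1}}^{2}}v$, so the largest admissible $\gamma^{j}$ is the same in both runs and $t_{k}^{1}=t_{k}^{2}$. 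The main obstacle I anticipate is justifying the closure/relative-interior swap together with the polyhedral-duality identity $A^{T}\mathbb{R}^{l}_{+}=K^{*}$; the latter is precisely what the hypothesis $A\in\mathcal{A}$ delivers, and without it different transform matrices would carve out genuinely different constraint hyperplane sections.
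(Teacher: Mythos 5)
Your argument is correct on its own terms, but it proves a narrower statement than the proposition. You import strong $K$-convexity ``from Lemma \ref{l5.3}'' to force $\dual{s_{k-1},y^{k-1}_{i}}>0$ for every $i$, and everything downstream --- the linearization $\alpha^{k}_{i}=\dual{A_{i},v}/\nm{s_{k-1}}^{2}$, the change of variables $c^{*}=A^{T}\lambda$ turning $\dual{\lambda,\alpha^{k}}=1$ into the $A$-free slice $\{c^{*}\in K^{*}:\dual{c^{*},v}=\nm{s_{k-1}}^{2}\}$, and the rewriting of the Armijo test --- lives entirely in the first branch of (\ref{ak}). The proposition, however, carries no convexity hypothesis: the condition $\alpha_{\min}<\alpha^{k,j}_{i}<\alpha_{\max}$ only rules out clipping and the degenerate third branch, not the branch $\dual{s_{k-1},y^{k-1}_{i}}<0$, where $\alpha^{k}_{i}=\nm{y^{k-1}_{i}}/\nm{s_{k-1}}$ is no longer a linear functional of $A_{i}$ evaluated at $v$. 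In that case your constraint set does not collapse to a hyperplane section of $K^{*}$, and the ``$A$-free'' reformulation breaks down. The paper's proof avoids this entirely: it writes the subproblem (\ref{bbk}) as $\min_{d}\max_{i\in[l]}\dual{A_{i}/\alpha^{k}_{i},JF(x^{k})d}+\frac{1}{2}\nm{d}^{2}$, uses that rows of $A^{1},A^{2}\in\mathcal{A}$ agree up to permutation and positive scaling, and observes that \emph{both} nondegenerate branches of (\ref{ak}) are positively homogeneous of degree one in $A_{i}$, so the vectors $A_{i}/\alpha^{k}_{i}$ depend only on the ray spanned by $A_{i}$; hence the two subproblems (and then the two line searches) coincide. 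To repair your proof without adding hypotheses, you would replace the linearization step by exactly this homogeneity observation.

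Within the strongly convex regime your route is genuinely different and has some appeal: it exhibits the BB subproblem as a steepest-descent subproblem over the intrinsic base $\{c^{*}\in K^{*}:\dual{c^{*},v}=\nm{s_{k-1}}^{2}\}$ of $K^{*}$, which matches the paper's broader theme that choosing $\alpha^{k}$ amounts to choosing a base of the dual cone, and it makes the $A$-independence conceptually transparent rather than computational. The closure/relative-interior swap you flag is handled correctly ($A^{T}\mathbb{R}^{l}_{+}=K^{*}$ is finitely generated hence closed, and $v\in\mathrm{int}(K)$ makes the slice compact), and the stepsize argument is fine once $d^{k}_{1}=d^{k}_{2}$ is in hand. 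But as a proof of the proposition as stated, the missing second-branch case is a genuine gap.
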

\begin{proof}
	Denote $A_{i}^{1}$ and $A_{i}^{2}$ the the $i$-th row vector of $A^{1}$ and $A^{2}$, respectively. Before presenting the main results, we rewritten the subproblem (\ref{bbk}) as follows:
	\begin{equation*}
		\min\limits_{d\in\mathbb{R}^{n}}\max\limits_{i\in[l]}\dual{\frac{A_{i}}{\alpha^{k}_{i}}, JF(x^{k})d}+\frac{1}{2}\nm{d}^{2}.
	\end{equation*}
Recall that $A^{1},A^{2}\in \mathcal{A}$, there exists a vector $a\in\mathbb{R}^{l}_{++}$ such that 
\begin{equation}\label{eq15}
	\left\{A^{1}_{i}:i\in[l]\right\}=\left\{a_{i}A^{2}_{i}:i\in[l]\right\}.
\end{equation}
We claim the following assertion:
\begin{equation}\label{eq16}
	\left\{\frac{A^{1}_{i}}{\alpha_{i}^{k,1}}:i\in[l]\right\}=\left\{\frac{A^{2}_{i}}{\alpha_{i}^{k,2}}:i\in[l]\right\}.
\end{equation}
This, together with the reformulated subproblem, implies that $d^{k}_{1}=d^{k}_{2}$. Therefore, $t_{k}^{1}=t_{k}^{2}$ is a consequence of (\ref{eq16}). 
\par Next, we prove that assertion (\ref{eq16}) holds. For any $i\in[l]$, it follows by (\ref{eq15}) that there exist $j\in[l]$ such that $A^{1}_{i}=a_{j}A^{2}_{j}$. Notice that $\alpha_{\min}<\alpha_{i}^{k,1},\alpha_{i}^{k,2}<\alpha_{\max}$, $i\in[l]$, we distinguish two cases:
\begin{center}
$\alpha_{i}^{k,1}={\nm{A^{1}_{i}(JF(x^{k})-JF(x^{k-1}))}}/{\nm{s_{k-1}}}$ and $\alpha_{j}^{k,2}={\nm{A^{2}_{j}(JF(x^{k})-JF(x^{k-1}))}}/{\nm{s_{k-1}}}$,
\end{center} 
and
\begin{center}
	$\alpha_{i}^{k,1}={\dual{A^{1}_{i}(JF(x^{k})-JF(x^{k-1})),s_{k-1}}}/{\nm{s_{k-1}}^{2}}$ and $\alpha_{i}^{k,2}={\dual{A^{1}_{i}(JF(x^{k})-JF(x^{k-1})),s_{k-1}}}/{\nm{s_{k-1}}^{2}}$.
\end{center}
Since $A^{1}_{i}=a_{j}A^{2}_{j}$ ($a_{j}>0$), it is easy to verify that
$$\frac{A^{1}_{i}}{\alpha_{i}^{k,1}}=\frac{A^{2}_{j}}{\alpha_{j}^{k,2}}$$
holds in both cases. Thus, we have 
\begin{equation*}
	\left\{\frac{A^{1}_{i}}{\alpha_{i}^{k,1}}:i\in[l]\right\}\subseteq\left\{\frac{A^{2}_{i}}{\alpha_{i}^{k,2}}:i\in[l]\right\}.
\end{equation*}
The relation
\begin{equation*}
	\left\{\frac{A^{2}_{i}}{\alpha_{i}^{k,2}}:i\in[l]\right\}\subseteq\left\{\frac{A^{1}_{i}}{\alpha_{i}^{k,1}}:i\in[l]\right\}
\end{equation*}
follows the similar arguments, this concludes the proof.
\end{proof}
\begin{remark}
	Assume $\hat{\mathcal{A}}\subset\mathcal{A}$ is bounded and $A_{i}$ is bounded away from $0$ for all $A\in\hat{\mathcal{A}}$. Then, there exists $\alpha_{\min}$ and $\alpha_{\max}$ such that the assumption $\alpha_{\min}<\alpha_{i}^{k}<\alpha_{\max}$, $i\in[l]$ holds for all $A\in\hat{\mathcal{A}}$ with $ \dual{s_{k-1},{A_{i}(JF(x^{k})-JF(x^{k-1}))}}\neq0$. For the case with linear objective, $\dual{s_{k-1},{A_{i}(JF(x^{k})-JF(x^{k-1}))}}=0$ may hold. As illustrated in \citep[Example 3]{CTY2023a}, for any $A_{1},A_{2}\in\hat{\mathcal{A}}$, we have $d^{k}_{1}\approx d^{k}_{2}$ with sufficient small $\alpha_{\min}$. As a result, we conclude that the performance of Algorithm \ref{alg6} is not sensitive to the choice of $A$.
\end{remark}
\subsection{Backtracking method for VOPs with polyhedral cones}
As described in Lemma \ref{l6.2}, we apply Barzilai-Borwein method to ensure $A\bm\mu\preceq\alpha^{k}\preceq A\bm\ell$, which in turn is expected to obtain $A\bm\mu\preceq\alpha^{k}/t_{k}\preceq A\bm\ell$, thereby achieving a fast linear convergence rate in practice. However, this strategy actually yields an even slower convergence rate. 
\begin{proposition}
		Assume that $F(\cdot)$ is strongly $K$-convex with $\bm\mu\in{\rm int}(K)$, where $K=\{x\in\mathbb{R}^{m}:0\preceq Ax\}.$ Let $\{x^{k}\}$ be the sequence generated by Algorithm \ref{alg6} and $x^{*}$ be the efficient solution satisfies $F(x^*)\preceq_{K}F(x^k)$ for all $k\geq0$. Then 
	 $$\nm{x^{k+1}-x^{*}}\leq\sqrt{1-\gamma\min\limits_{i\neq j,i,j\in[l]}\left\{\frac{\dual{A_{i},\bm\mu}}{\dual{A_{i},\bm\ell}}\frac{\dual{A_{j},\bm\mu}}{\dual{A_{j},\bm\ell}}\right\}}\nm{x^{k}-x^{*}}$$
	 holds for all $k\geq0$.
\end{proposition}
\begin{proof}
	 By Lemma \ref{l6.2}(i), we have $A\bm\mu\preceq \alpha^{k}\preceq A\bm\ell$. For any $i\neq j$, consider the choice $\alpha^{k}_{i}=\dual{A_{i},\bm\mu}$ and $\alpha^{k}_{j}=\dual{A_{j},\bm\ell}$, then we have
	$$\min\limits_{i\in[l]}\frac{\alpha^{k}_{i}}{\dual{A_{i},\bm\ell}}\min\limits_{i\in[l]}\frac{\dual{A_{i},\bm\mu}}{\alpha^{k}_{i}}\leq\frac{\dual{A_{i},\bm\mu}}{\dual{A_{i},\bm\ell}}\frac{\dual{A_{j},\bm\mu}}{\dual{A_{j},\bm\ell}}.$$
The arbitrary of $i$ and $j$ with $i\neq j$ yields
$$\min\limits_{i\in[l]}\frac{\alpha^{k}_{i}}{\dual{A_{i},\bm\ell}}\min\limits_{i\in[l]}\frac{\dual{A_{i},\bm\mu}}{\alpha^{k}_{i}}\leq\min\limits_{i\neq j,i,j\in[l]}\left\{\frac{\dual{A_{i},\bm\mu}}{\dual{A_{i},\bm\ell}}\frac{\dual{A_{j},\bm\mu}}{\dual{A_{j},\bm\ell}}\right\}.$$
Conversely, notice that 
\begin{center}
	$\min\limits_{i\in[l]}\frac{\alpha^{k}_{i}}{\dual{A_{i},\bm\ell}}\geq \min\limits_{i\in[l]}\frac{\dual{A_{i},\bm\mu}}{\dual{A_{i},\bm\ell}}$~~~and~~~ 	$\min\limits_{i\in[l]}\frac{\dual{A_{i},\bm\mu}}{\alpha^{k}_{i}}\geq \min\limits_{i\in[l]}\frac{\dual{A_{i},\bm\mu}}{\dual{A_{i},\bm\ell}}$,
\end{center}
and no single $\alpha^{k}$ can simultaneously satisfy both equilities unless there exists two distinct indices $i_{0}\neq j_{0}$ such that $$\min\limits_{i\in[l]}\frac{\dual{A_{i},\bm\mu}}{\dual{A_{i},\bm\ell}}=\frac{\dual{A_{i_{0}},\bm\mu}}{\dual{A_{i_{0}},\bm\ell}}=\frac{\dual{A_{j_{0}},\bm\mu}}{\dual{A_{j_{0}},\bm\ell}},$$
which yields
$$\min\limits_{i\in[l]}\frac{\alpha^{k}_{i}}{\dual{A_{i},\bm\ell}}\min\limits_{i\in[l]}\frac{\dual{A_{i},\bm\mu}}{\alpha^{k}_{i}}\geq\min\limits_{i\neq j,i,j\in[l]}\left\{\frac{\dual{A_{i},\bm\mu}}{\dual{A_{i},\bm\ell}}\frac{\dual{A_{j},\bm\mu}}{\dual{A_{j},\bm\ell}}\right\}.$$
Hence, 
$$\min\limits_{i\in[l]}\frac{\alpha^{k}_{i}}{\dual{A_{i},\bm\ell}}\min\limits_{i\in[l]}\frac{\dual{A_{i},\bm\mu}}{\alpha^{k}_{i}}=\min\limits_{i\neq j,i,j\in[l]}\left\{\frac{\dual{A_{i},\bm\mu}}{\dual{A_{i},\bm\ell}}\frac{\dual{A_{j},\bm\mu}}{\dual{A_{j},\bm\ell}}\right\}.$$
The desired result follows by Lemma \ref{l6.2}(ii) and (iv).
\end{proof}
In contrast to the line search method for VOPs, the corresponding method for SOPs preserves the same linear convergence rate as its counterpart without line search. This discrepancy arises because the line search in VOPs is imposed with respect to a \textbf{strict dominance relation} by the underlying partial order. We clarify the distinction with the following example.
\begin{example}
	Let $K=\mathbb{R}^{2}_{+}$ and $A = I_{2}$. Consider first the case where $\alpha^{k}=(\bm\mu_{1},\bm\ell_{2})^{T}$. As illustrated in Fig. \ref{fls}(a), the ratio $\alpha^{k}/t_{k}$ lies on the blue line determined by the strict dominance relation. In this situation, the line search results in a slower linear convergence rate, with the worst-case rate being $\mathcal{O}\left(\left(\sqrt{1-\gamma\frac{\bm\mu_{1}\bm\mu_{2}}{\bm\ell_{1}\bm\ell_{2}}}\right)^{k}\right)$. 
	\begin{figure}[H]
		\centering
		\subfigure[Line search for VOPs]
		{
			\begin{minipage}[H]{.45\linewidth}
				\centering
				\includegraphics[scale=0.22]{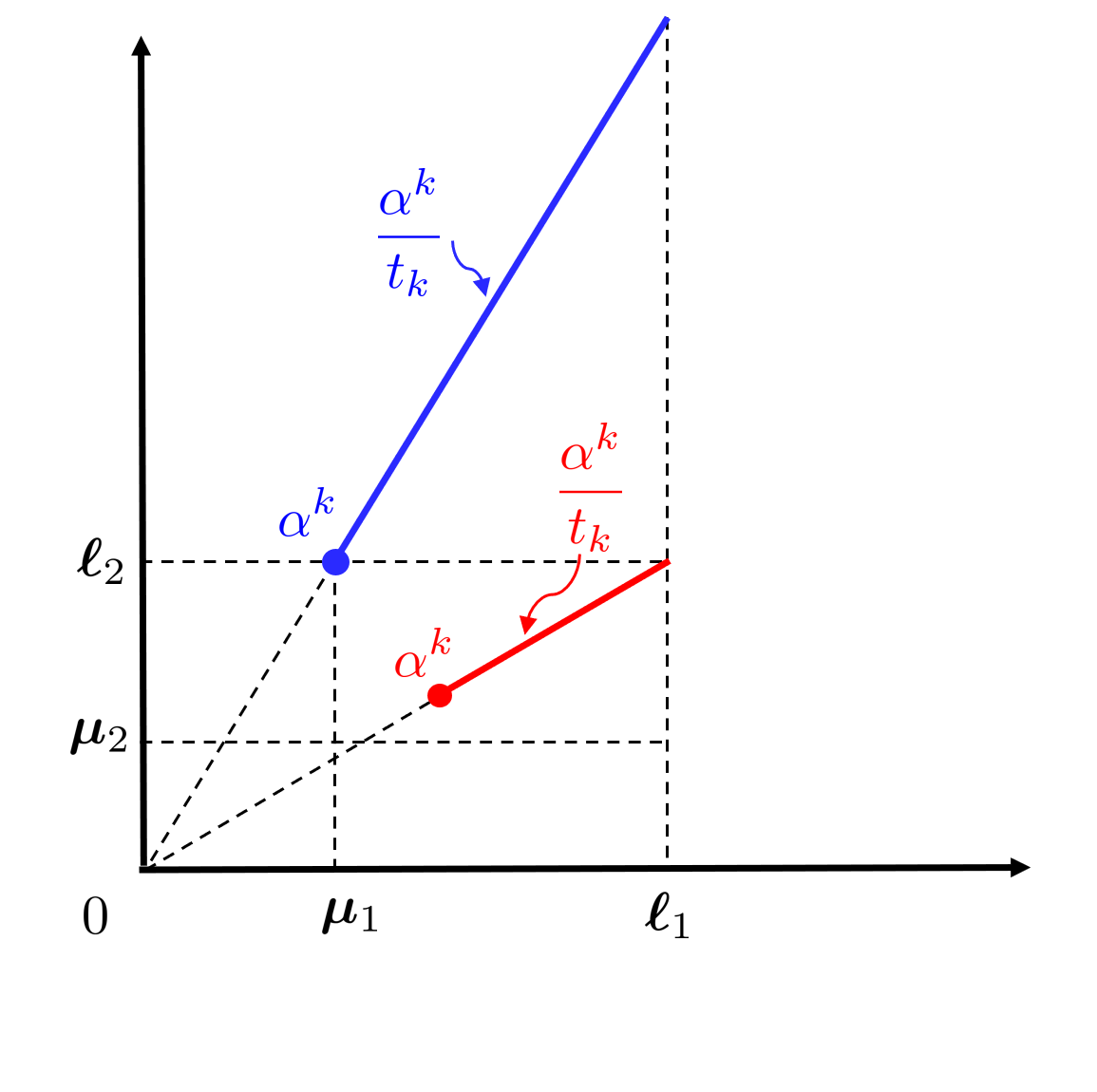} 
			\end{minipage}
		}
		\subfigure[Line search for SOPs]
		{
			\begin{minipage}[H]{.45\linewidth}
				\centering
				\includegraphics[scale=0.22]{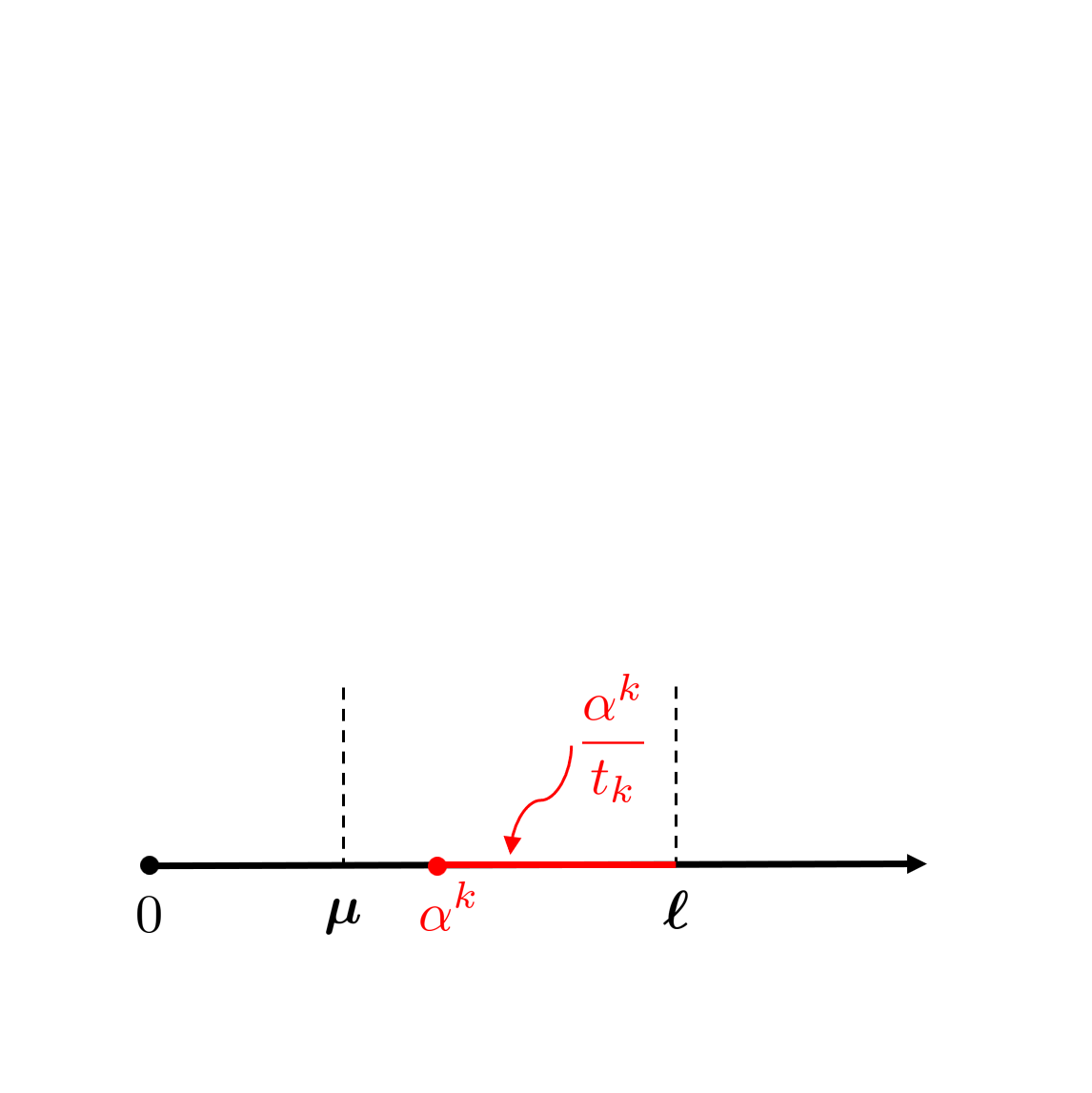} 
			\end{minipage}
		}
		\caption{Differences in line search for VOPs and SOPs.}
		\label{fls}
	\end{figure}
	In contrast, if $\alpha^{k}$ falls on the red point in Fig.~\ref{fls}(a), then $\alpha^{k}/t_{k}$ lies on the red line. In this case, line search for VOPs potentially improves the practical linear convergence rate, with the worst-case rate coinciding with that of the method without line search. As illustrated in Fig. \ref{fls}(b), line search for SOPs tends to improve linear convergence rate, and the worst case rate is the same as that of its counterpart without line search. 
\end{example}

The remaining question is how to preserve the linear convergence rate of descent methods for VOPs without line search when the smoothness parameter $\bm\ell$ is unknown. To address this, we revisit the backtracking method for VOPs, which was first proposed in \citep{CTY2023b}.

\begin{algorithm}
	\caption{Backtracking method for VOPs with polyhedral cones}\label{alg3}
	\LinesNumbered 
	\KwData{$x^{0}\in\mathbb{R}^{n},~A\in\mathcal{A},~0\prec\bm\ell^{0}\preceq A\bm\ell,~\tau>1$}
	\For{$k=0,1,...$}{  Update $\alpha^{k}:=\bm\ell^{k}$\\
		Update $x^{k+1}:=	\mathop{\arg\min}\limits_{x\in\mathbb{R}^{n}}\max\limits_{\lambda\in\Delta^{\alpha^{k}}_{l}}\left\{	\dual{\lambda,AJF(x^{k})(x-x^{k})}+\frac{1}{2}\nm{x-x^{k}}^{2}\right\}$\\
		\eIf{$x^{k+1}=x^{k}$}{ {\bf{return}} $K$-stationary point $x^{k}$ }{$s_{i}=0,i\in[l]$\\
			\Repeat{$\dual{A_{i},F(x^{k+1})-F(x^{k})}\leq\left\langle A_{i},JF(x^{k})(x^{k+1}-x^{k})\right\rangle+\frac{\alpha^{k}_{i}}{2}\|x^{k+1}-x^{k}\|^{2},~i\in[l] $}{
				Update $\alpha_{i}^{k}=\tau^{s_{i}}\bm\ell^{k}_{i},i\in[l]$\\
				Update $x^{k+1}:=	\mathop{\arg\min}\limits_{x\in\mathbb{R}^{n}}\max\limits_{\lambda\in\Delta^{\alpha^{k}}_{l}}\left\{	\dual{\lambda,AJF(x^{k})(x-x^{k})}+\frac{1}{2}\nm{x-x^{k}}^{2}\right\}$\\
				\For{$i=1,\cdots,l$}{
					\If{$\dual{A_{i},F(x^{k+1})-F(x^{k})}>\left\langle A_{i},JF(x^{k})(x^{k+1}-x^{k})\right\rangle+\frac{\alpha^{k}_{i}}{2}\|x^{k+1}-x^{k}\|^{2} $}{Update $s_{i}=s_{i}+1$}
				}
	}}
Update $\bm\ell^{k+1}:=\alpha^{k}/\tau$
}
		
\end{algorithm}

We consider the following surrogate:
\begin{equation}\label{m7}
	G_{k,A,\alpha^{k}}(x):=AJF(x^{k})(x-x^{k})+\frac{1}{2}\nm{x-x^{k}}^{2}\alpha^{k}.
\end{equation}
We can show that $G_{k,A,\alpha^{k}}(\cdot)$ is a non-majorizing surrogate function of $A(F(\cdot)-F(x^{k}))$ near $x^{k}$.
\begin{proposition}
	Let $G_{k,A,\alpha^{k}}(\cdot)$ be defined as (\ref{m7}). Then, the following statements hold.
	\begin{itemize}
		\item[(i)] $A(F(x^{k+1})-F(x^{k}))\preceq G_{k,A,\alpha^{k}}(x^{k+1})$.
		\item[(ii)] If $F(\cdot)$ is $K$-convex, then $G_{k,A,\alpha^{k}}(\cdot)\in\mathcal{S}_{\alpha^{k},\alpha^{k}}(AF,x^{k})$.
		\item[(iii)] If $F(\cdot)$ is strongly $K$-convex with $\bm\mu\in{\rm int}(K)$, then $G_{k,A,\alpha^{k}}(\cdot)\in\mathcal{S}_{\alpha^{k}-A\bm\mu,\alpha^{k}}(AF,x^{k})$.
	\end{itemize}
\end{proposition}
\begin{proof}
	The assertions can be obtained by using the similar arguments as in the proof of Proposition \ref{p5.2}.
\end{proof}

\begin{lemma}\label{l5.5}
	Assume that $F(\cdot)$ is strongly $K$-convex with $\bm\mu\in{\rm int}(K)$, where $K=\{x\in\mathbb{R}^{m}:0\preceq Ax\}.$ Let $\{x^{k}\}$ be the sequence generated by Algorithm \ref{alg6}. Then, the following statements hold:
	\begin{itemize}
		\item[(i)]  $\alpha^{k}\prec \tau A\bm\ell$;
		\item[(ii)] $\{x^{k}\}$ converges to an efficient solution $x^{*}$ of (\ref{VOP});
		\item[(iii)] $\nm{x^{k+1}-x^{*}}\leq\sqrt{1-\tau\min_{i\in [l]}{\dual{A_{i},\bm\mu}}/{\dual{A_{i},\bm\ell}}}\nm{x^{k}-x^{*}},~\forall k\geq0$.
	\end{itemize}
\end{lemma}
\begin{proof}
	(i) Suppose, to the contrary, that $\alpha^{k}_{i}\geq\tau\dual{A_{i},\bm\ell}$ holds for some $i\in[l]$. Then the backtracking procedure would be triggered only when $\alpha^{k}_{i}\geq\dual{A_{i},\bm\ell}$, which contradicts the backtracking condition. Assertions (ii) and (iii) can be obtained by using the similar arguments as in the proof of Lemma \ref{l4.1}.
\end{proof}

In contrast to the line search method for VOPs, the backtracking method preserves the same linear convergence rate as its counterpart  without line search. This discrepancy arises because backtracking for VOPs is imposed with respect to a \textbf{weak dominance relation} induced by the underlying partial order. We clarify the distinction with the following example.

\begin{example}
	Let $K=\mathbb{R}^{2}_{+}$ and $A = I_{2}$. Consider the case where $\bm\ell^{k}=(\bm\mu_{1},\bm\ell_{2})^{T}$. 
	\begin{figure}[htbp]
		\begin{center}	
			\includegraphics[scale=0.22]{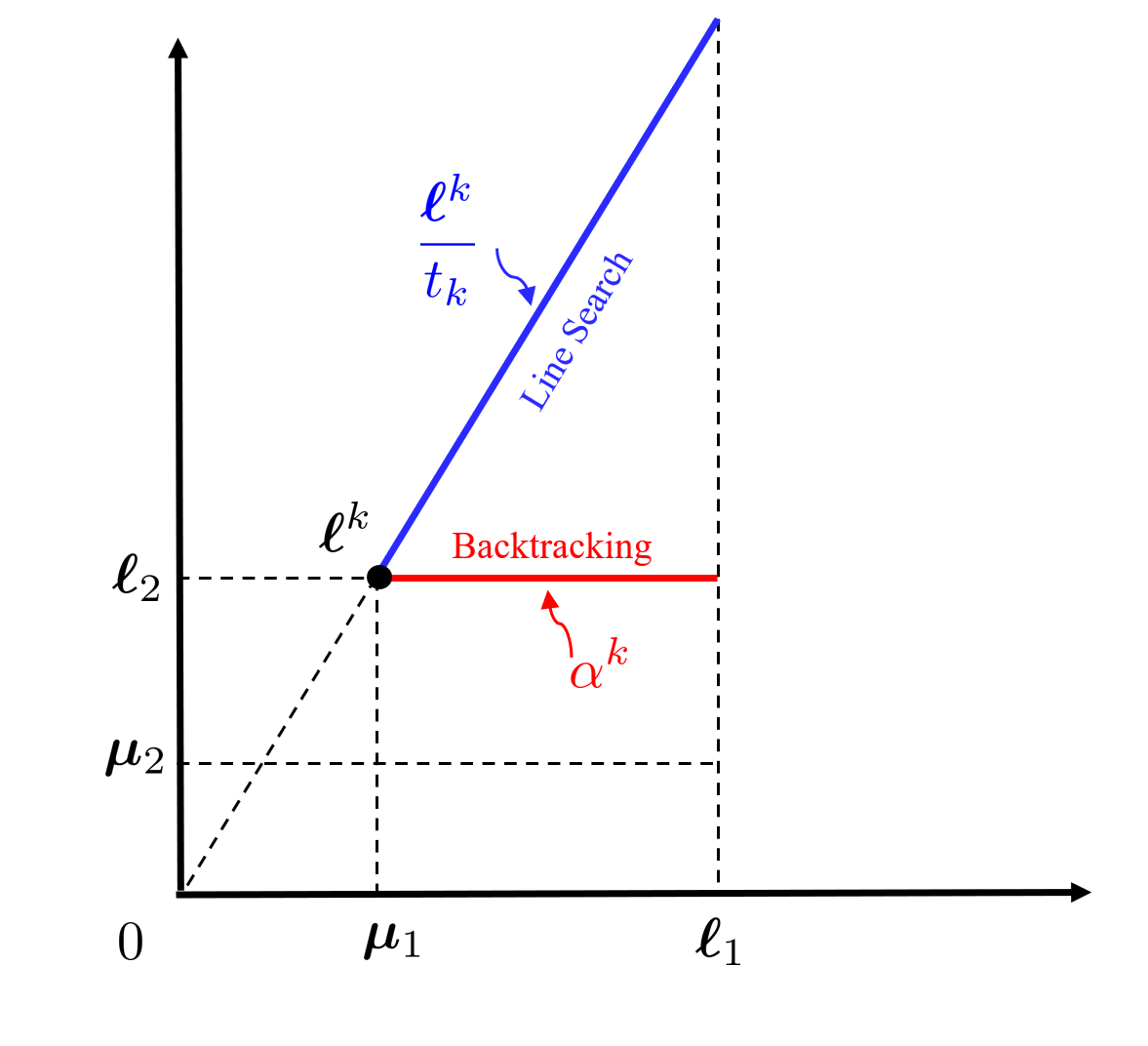} 
		\end{center}
		\caption{Differences between line search and backtracking for VOPs.}\label{fbt}
	\end{figure}

	As illustrated in Fig. \ref{fbt}, the ratio $\bm\ell^{k}/t_{k}$ lies on the blue line determined by the strict dominance relation. In this situation, the line search yields a slower linear convergence rate, with the worst-case rate given by $\mathcal{O}\left(\left(\sqrt{1-\gamma\frac{\bm\mu_{1}\bm\mu_{2}}{\bm\ell_{1}\bm\ell_{2}}}\right)^{k}\right)$. In contrast, the $\alpha^{k}$ lies on the red line determined by the weak dominance relation. In this case, the backtracking procedure for VOPs preserves the same linear convergence rate as its counterpart without line search. 
\end{example}
\begin{remark}
	A notable advantage of backtracking is its ability to adapt to the local smoothness and to preserve the same linear convergence rate as its counterpart  without line search. However, this benefit comes at the price of increased per-iteration computational cost, since backtracking requires repeatedly solving the associated subproblems.
\end{remark}
\section{Numerical Results}\label{sec7} 
In this section, we present numerical results to demonstrate the performance of Barzilai-Borwein descent methods for VOPs (BBDVO) with polyhedral cones.  We also compare BBDVO with steepest descent method for VOPs (SDVO) and equiangular direction method \citep{ADN2020} for VOPs (EDVO). All numerical experiments were implemented in Python 3.7 and executed on a personal computer with an Intel Core i7-11390H, 3.40 GHz processor, and 16 GB of RAM.
\subsection{Implementation details}
 For all tested algorithms, we used Armijo line search (\ref{armijo}) with $\sigma=10^{-4}$ and $\gamma=0.5$.  The test algorithms were executed on several test problems, and the problem illustration is given in Table \ref{tab1}. The dimensions of variables and objective functions are presented in the second and third columns, respectively. $x_{L}$ and $x_{U}$ represent lower bounds and upper bounds of variables, respectively.

\begin{table}[H]
	\begin{center}
		\caption{Description of all test problems used in numerical experiments}
		\label{tab1}\vspace{-5mm}
	\end{center}
	\centering
	\resizebox{.8\columnwidth}{!}{
		\begin{tabular}{lllllllllll}
			\hline
			Problem    &  & $n$     &           & $m$     &  & $x_{L}$               &  & $x_{U}$             &  & Reference \\ \hline
				BK1        &  & 2     & \textbf{} & 2     &  & (-5,-5)         &  & (10,10)       &  & \citep{BK1}         \\
			DD1        &  & 5     & \textbf{} & 2     &  & (-20,...,-20)   &  & (20,...,20)   &  & \citep{DD1998}         \\
			Deb        &  & 2     & \textbf{} & 2     &  & (0.1,0.1)       &  & (1,1)         &  & \citep{D1999}          \\
			FF1        &  & 2     & \textbf{} & 2     &  & (-1,-1)         &  & (1,1)         &  & \citep{BK1}         \\
			Hil1       &  & 2     & \textbf{} & 2     &  & (0,0)           &  & (1,1)         &  & \citep{Hil1}         \\
			Imbalance1 &  & 2     & \textbf{} & 2     &  & (-2,-2)         &  & (2,2)         &  & \citep{CTY2023a}         \\
			JOS1a      &  & 50    & \textbf{} & 2     &  & (-2,...,-2)     &  & (2,...,2)     &  & \citep{JO2001}         \\
			LE1        &  & 2     & \textbf{} & 2     &  & (-5,-5)         &  & (10,10)       &  & \citep{BK1}        \\
			PNR        &  & 2     & \textbf{} & 2     &  & (-2,-2)         &  & (2,2)         &  & \citep{PN2006}        \\
			WIT1       &  & 2     & \textbf{} & 2     &  & (-2,-2)         &  & (2,2)         &  & \citep{W2012}       \\
	         \hline
		\end{tabular}
	}
\end{table}
For the tested problems, the partial order are induced by polyhedral cones $\mathbb{R}^{2}_{+}$, $K_{1}$, and $K_{2}$, respectively, where
$$K_{1}:=\{x\in\mathbb{R}^{2}:5x_{1}-x_{2}\geq0,~-x_{1}+5x_{2}\geq0\}\subseteq\mathbb{R}^{2}_{+},$$
and
$$K_{2}:=\{x\in\mathbb{R}^{2}:5x_{1}+x_{2}\geq0,~x_{1}+5x_{2}\geq0\}\supseteq\mathbb{R}^{2}_{+}.$$
The polyhedral cones are illustrated in Fig. \ref{fig2}.
\begin{figure}[htbp]
	\begin{center}	
		\includegraphics[scale=0.5]{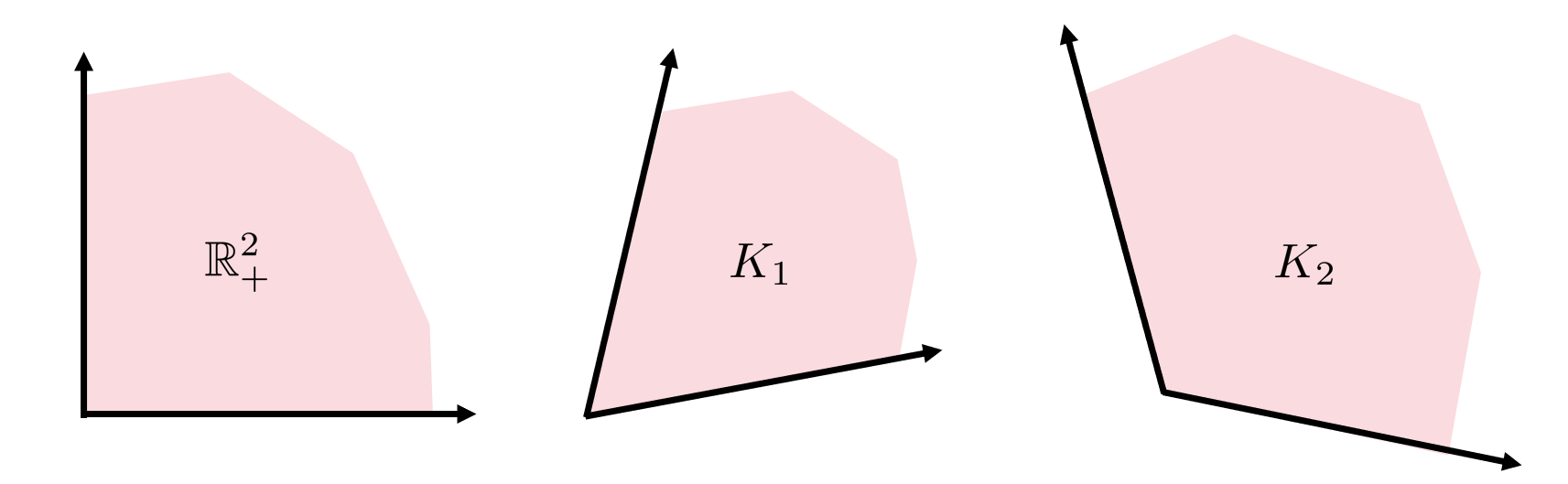} 
	\end{center}
	\caption{Illustration of the polyhedral cones.}\label{fig2}
	
\end{figure}

 For each problem, we used the same initial points for different tested algorithms. The initial points were randomly selected within the specified lower and upper bounds. Dual subproblems of different algorithms were efficiently solved by Frank-Wolfe method. To guarantee a fair comparison, we decided to let the algorithms run until one of the following stopping conditions was satisfied:
\begin{itemize}
	\item the current solution satisfies $\nm{\bar{d}^{k}}\leq10^{-6}$, where $\bar{d}$ is the steepest descent direction (\ref{sdp}) with $A=\bar{A}$ and $\bar{A}\in\mathcal{A}$ with row vectors $\nm{\bar{A}_{i}}=1$ for $i=1,...,l$;
	\item the number of iterations reaches 500.
\end{itemize} 
The recorded averages from the 200 runs include the number of iterations, the number of function evaluations, and the CPU time. The performance profiles \citep{DM2002} in terms of iterations, function evaluations and CPU time are used to illustrate the overall performance of the 200 runs.
\subsection{Numerical results for VOPs with $K=\mathbb{R}^{2}_{+}$}
In this case, we denote the set of transform matrices $\mathcal{A}_{0}:=\{A:A\mathbb{R}^{2}_{+}=\mathbb{R}^{2}_{+}\}$. For SDVO, we choose $A^{0}=I_{2}\in\mathcal{A}_{0}$ and $\hat{A}^{0}\in\mathcal{A}_{0}$ in subproblem, respectively, where $\hat{A}^{0}:=\{A:A_{i}=A^{0}_{i}/\max\{1,\nm{\nabla F_{i}(x^{0})}_{\infty}\},~i=1,2\}$\footnote{The scale strategy is initially proposed in \citep{GLP2022} due to numerical reasons.}. For EDVO, normalization is applied for each of gradients in the transformed subproblem, which implies that EDVO is also not sensitive to the choice of transform matrix. As a result, we choose $A=A^{0}$ in subproblems of EDVO and BBDVO.
\begin{table}[H]
	\begin{center}
		\caption{Number of average iterations (iter), number of average function evaluations (feval), and average CPU time (time($ms$)) of SDVO, EDVO and BBDVO implemented on different test problems with {$K=\mathbb{R}^{2}_{+}$}}\label{tab2}\vspace{-2mm}
	\end{center}
	\centering
	\resizebox{.95\columnwidth}{!}{
		\begin{tabular}{lrrrrrrrrrrrrrrr}
			\hline
			Problem &
			\multicolumn{3}{l}{SDVO with $A=A^{0}$} &
			&
			\multicolumn{3}{l}{SDVO with $A=\hat{A}^{0}$} &
			\multicolumn{1}{l}{} &
			\multicolumn{3}{l}{EDVO with $A=A^{0}$} &
			\multicolumn{1}{l}{} &
			\multicolumn{3}{l}{BBDVO with $A=A^{0}$} \\ \cline{2-4} \cline{6-8} \cline{10-12}  \cline{14-16}
			&
			iter &
			feval &
			time &
			\textbf{} &
			iter &
			feval &
			time &
			&
			iter &
			feval &
			time &
			&
			iter &
			feval &
			time \\ \hline
			BK1        & \textbf{1.00} & 2.00  & 0.24  &  & 36.93 & 40.15 & 4.03  &  & 30.12 & 34.07 & 2.67  &  & \textbf{1.00} & \textbf{1.00} & \textbf{0.21} \\
			DD1        & 70.95 & 199.40 & 12.14 &  & 248.33 & 250.81 & 30.41 &  & 376.96 & 376.96 & 38.11 &  & \textbf{7.33} & \textbf{8.51} & \textbf{1.24} \\
			Deb        & 57.79 & 376.02 & 13.67 &  & 5.67  & 13.15 & 1.36  &  & 10.48 & 26.39 & 1.43  &  & \textbf{4.51} & \textbf{6.67} & \textbf{0.79} \\
			FF1        & 28.95 & 29.11  & 3.93  &  & 8.22  & 9.15  & 1.38  &  & 8.68  & 9.51  & 1.03  &  & \textbf{4.68} & \textbf{5.90} & \textbf{0.84} \\
			Hil1       & 24.89 & 82.53  & 5.15  &  & 15.62 & 36.52 & 3.05  &  & 20.09 & 51.60 & 3.06  &  & \textbf{11.42} & \textbf{12.25} & \textbf{2.07} \\
			Imbalance1 & 88.23 & 178.31 & 12.35 &  & 387.91 & 388.32 & 43.47 &  & 420.04 & 420.99 & 42.01 &  & \textbf{2.62} & \textbf{3.60} & \textbf{0.50} \\
			JOS1a      & 302.72 & 302.72 & 35.80 &  & 16.51 & 18.03 & 3.40  &  & 74.13 & 74.44 & 9.29  &  & \textbf{1.00} & \textbf{1.00} & \textbf{0.26} \\
			LE1        & 22.44 & 52.57  & 3.72  &  & 7.98  & 29.78 & 1.70  &  & 11.80 & 29.93 & 1.47  &  & \textbf{4.65} & \textbf{7.13} & \textbf{0.82} \\
			PNR        & 11.24 & 48.40  & 2.83  &  & 12.32 & 16.09 & 1.85  &  & 14.37 & 23.24 & 1.75  &  & \textbf{4.28} & \textbf{4.77} & \textbf{0.71} \\
			WIT1       & 73.30 & 461.37 & 19.93 &  & 139.88 & 146.58 & 15.15 &  & 10.64 & 17.97 & 1.33  &  & \textbf{3.59} & \textbf{3.68} & \textbf{0.62} \\
			\hline
		\end{tabular}
	}
\end{table}

\begin{figure}[H]
	\centering
	\subfigure[Iterations]
	{
		\begin{minipage}[H]{.3\linewidth}
			\centering
			\includegraphics[scale=0.25]{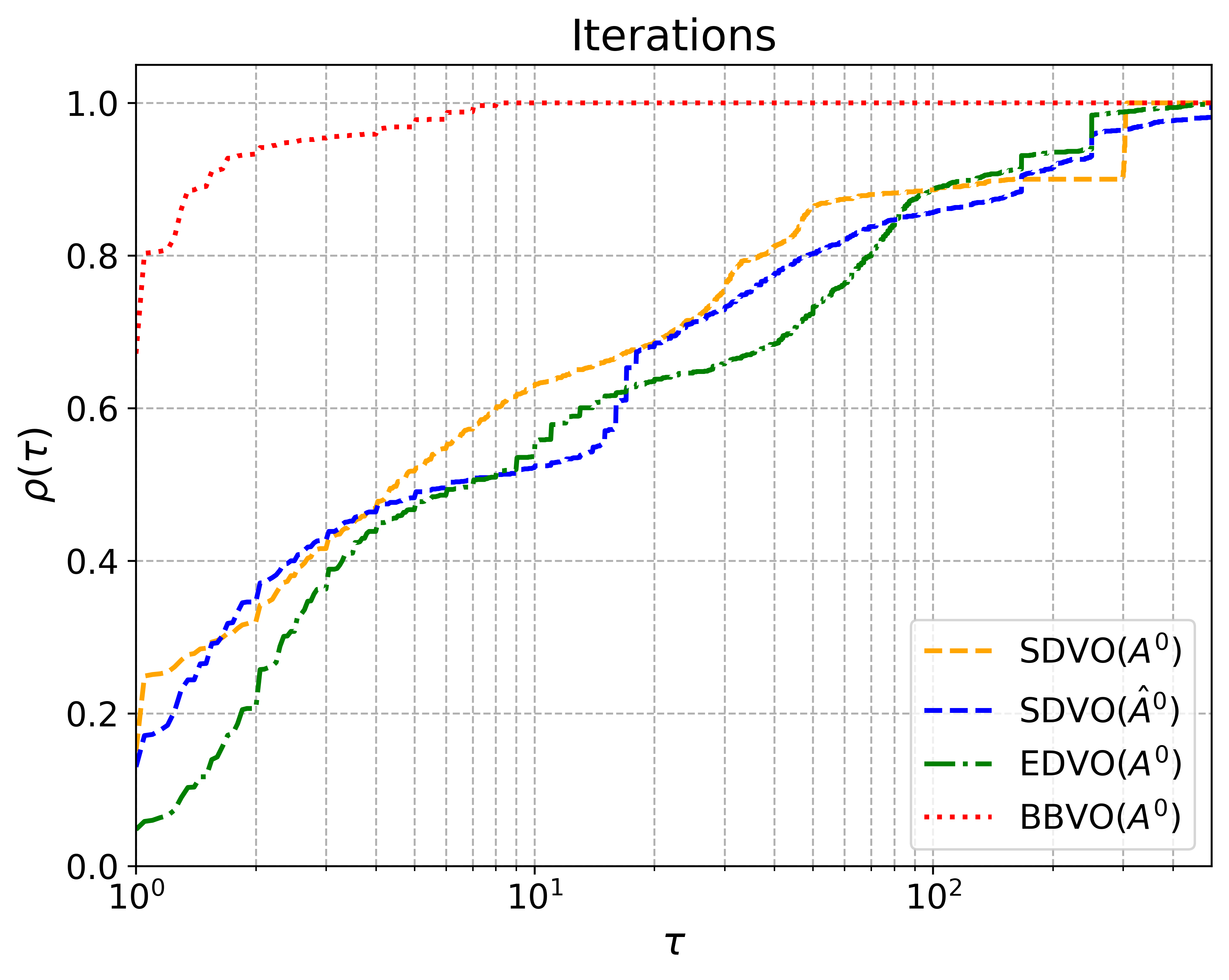} 
		\end{minipage}
	}
	\subfigure[Function Evaluations]
	{
		\begin{minipage}[H]{.3\linewidth}
			\centering
			\includegraphics[scale=0.25]{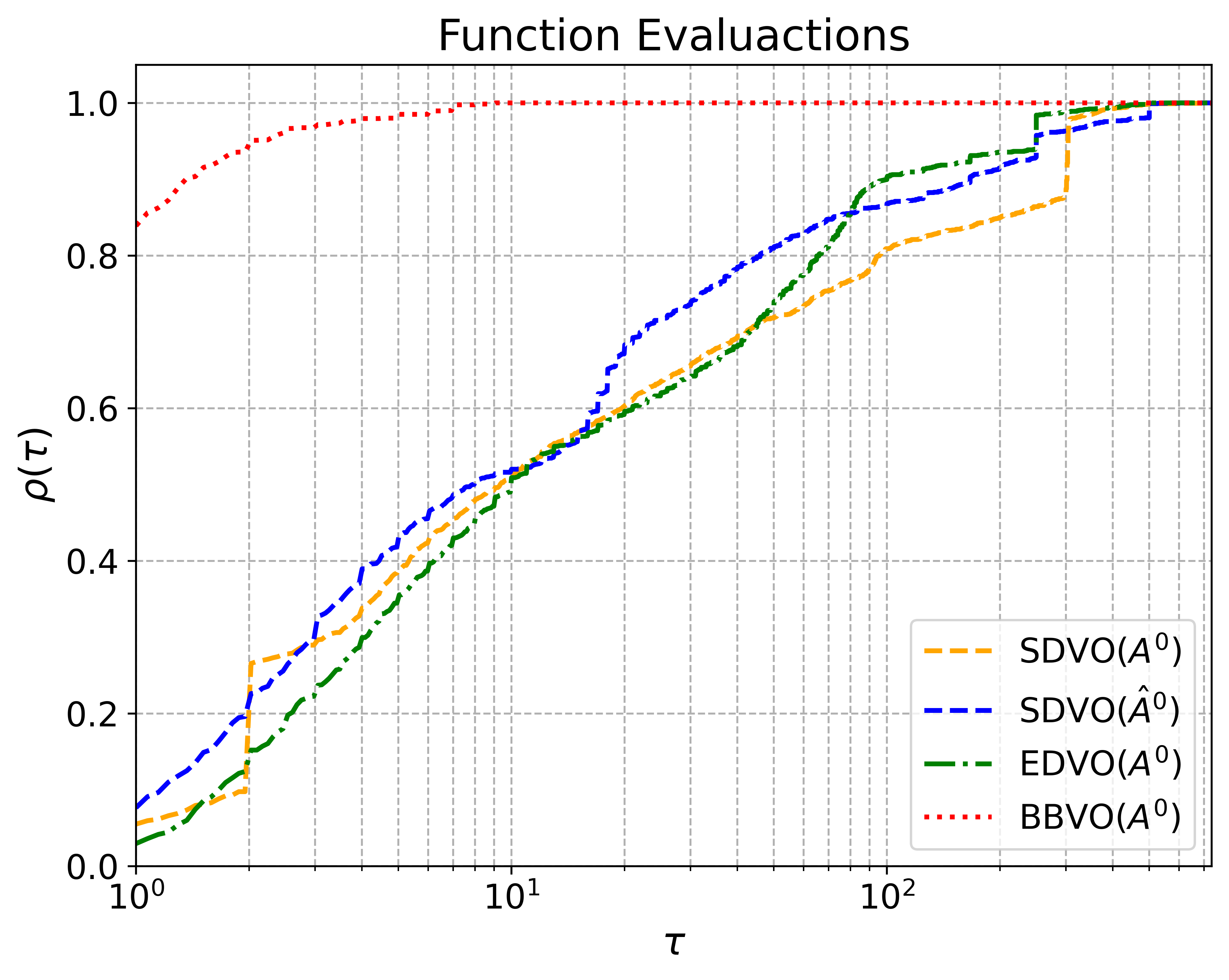} 
		\end{minipage}
	}
	\subfigure[CPU Time]
	{
		\begin{minipage}[H]{.3\linewidth}
			\centering
			\includegraphics[scale=0.25]{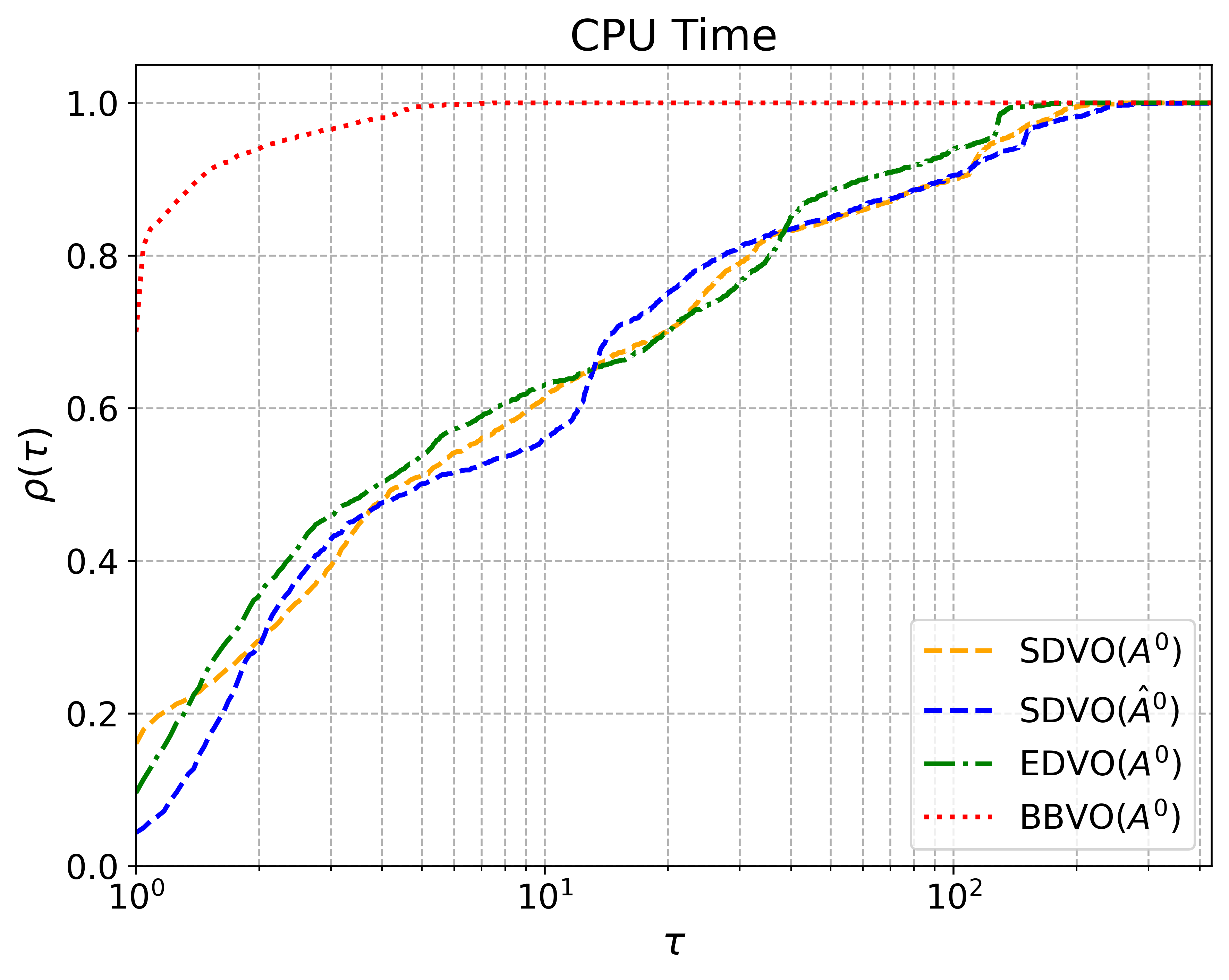} 
		\end{minipage}
	}
	\caption{Performance profiles on the test problems in Table \ref{tab2} with $K=\mathbb{R}^{2}_{+}$.}
	\label{fk0}
\end{figure}
\subsection{Numerical results for VOPs with $K=K_{1}$}
In this case, we denote the set of transform matrices $\mathcal{A}_{1}:=\{A:AK_{1}=\mathbb{R}^{2}_{+}\}$. For SDVO, we choose $A^{1}=\begin{pmatrix} 5&\ -1\\-1&\ 5 \end{pmatrix}\in\mathcal{A}_{0}$ and $\hat{A}^{1}\in\mathcal{A}_{1}$ in subproblem, respectively, where $\hat{A}^{1}:=\{A:A_{i}=A^{1}_{i}/\max\{1,\nm{\nabla F_{i}(x^{0})}_{\infty}\},~i=1,2\}$.
\begin{table}[H]
	\begin{center}
		\caption{Number of average iterations (iter), number of average function evaluations (feval), and average CPU time (time($ms$)) of SDVO, EDVO and BBDVO implemented on different test problems with {$K=K_{1}$}}\label{tab3}\vspace{-2mm}
	\end{center}
	\centering
	\resizebox{.95\columnwidth}{!}{
		\begin{tabular}{lrrrrrrrrrrrrrrr}
			\hline
			Problem &
			\multicolumn{3}{l}{SDVO with $A=A^{1}$} &
			&
			\multicolumn{3}{l}{SDVO with $A=\hat{A}^{1}$} &
			\multicolumn{1}{l}{} &
			\multicolumn{3}{l}{EDVO with $A=A^{1}$} &
			\multicolumn{1}{l}{} &
			\multicolumn{3}{l}{BBDVO with $A=A^{1}$} \\ \cline{2-4} \cline{6-8} \cline{10-12}  \cline{14-16} 
			&
			iter &
			feval &
			time &
			\textbf{} &
			iter &
			feval &
			time &
			&
			iter &
			feval &
			time &
			&
			iter &
			feval &
			time \\ \hline
			BK1        & \textbf{1.00} & 2.65 & \textbf{0.21} &  & 7.51 & 9.87 & 1.21 &  & 48.30 & 51.56 & 4.07 &  & \textbf{1.00} & \textbf{1.00} & 0.22 \\
			DD1        & 92.40 & 373.32 & 18.42 &  & 118.35 & 120.30 & 15.34 &  & 496.99 & 496.99 & 53.13 &  & \textbf{41.85} & \textbf{47.12} & \textbf{6.39} \\
			Deb        & 102.21 & 876.19 & 27.68 &  & \textbf{27.82} & 146.28 & \textbf{6.29} &  & 81.45 & 161.00 & 9.44 &  & 35.69 & \textbf{71.88} & 6.48 \\
			FF1        & 33.19 & 105.29 & 5.85 &  & 17.23 & 56.82 & 3.16 &  & 45.35 & 65.10 & 4.73 &  & \textbf{16.00} & \textbf{16.95} & \textbf{2.61} \\
			Hil1       & 28.28 & 151.71 & 7.15 &  & 19.41 & 86.98 & 4.71 &  & 23.09 & 56.07 & 3.51 &  & \textbf{17.74} & \textbf{18.35} & \textbf{3.10} \\
			Imbalance1 & 77.34 & 309.32 & 13.55 &  & 397.37 & 397.44 & 46.97 &  & 500.00 & 500.00 & 50.14 &  & \textbf{28.78} & \textbf{31.18} & \textbf{4.49} \\
			JOS1a      & 69.86 & 69.86 & 6.82 &  & 6.57 & 17.06 & 2.55 &  & 117.30 & 117.43 & 13.78 &  & \textbf{1.00} & \textbf{1.00} & \textbf{0.26} \\
			LE1        & 14.02 & 69.28 & 2.88 &  & 12.37 & 55.82 & 2.46 &  & 15.99 & 39.60 & 1.92 &  & \textbf{6.31} & \textbf{7.49} & \textbf{1.14} \\
			PNR        & 27.10 & 157.72 & 5.71 &  & 12.76 & 37.65 & 2.15 &  & 21.80 & 27.54 & 2.41 &  & \textbf{9.78} & \textbf{10.97} & \textbf{1.64} \\
			WIT1       & 244.44 & 2084.16 & 111.19 &  & 199.03 & 245.92 & 24.42 &  & 414.29 & 416.78 & 42.52 &  & \textbf{158.78} & \textbf{164.91} & \textbf{22.80} \\
			\hline
		\end{tabular}
	}
\end{table}
\begin{figure}[H]
	\centering
	\subfigure[Iterations]
	{
		\begin{minipage}[H]{.3\linewidth}
			\centering
			\includegraphics[scale=0.25]{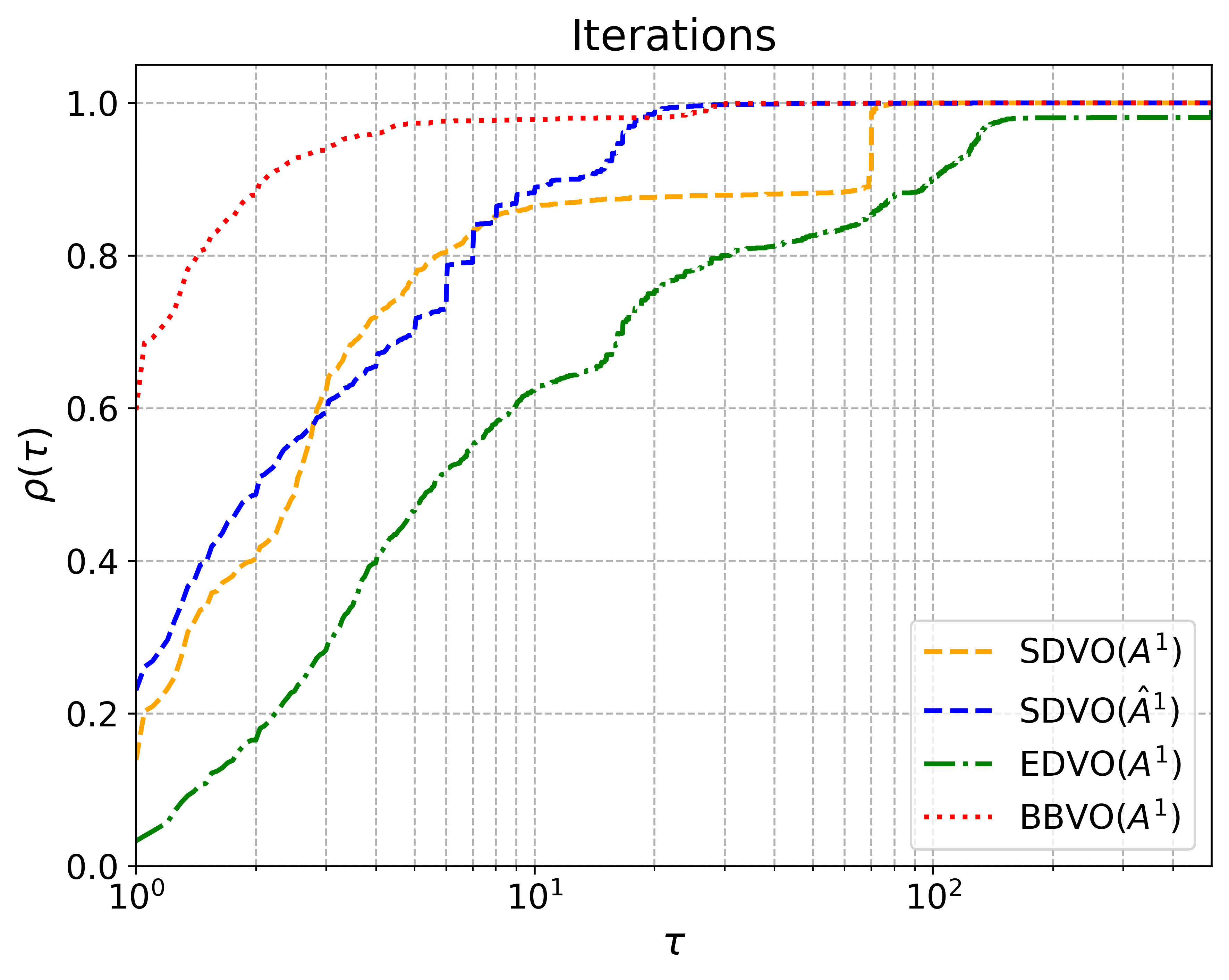} 
		\end{minipage}
	}
	\subfigure[Function Evaluations]
	{
		\begin{minipage}[H]{.3\linewidth}
			\centering
			\includegraphics[scale=0.25]{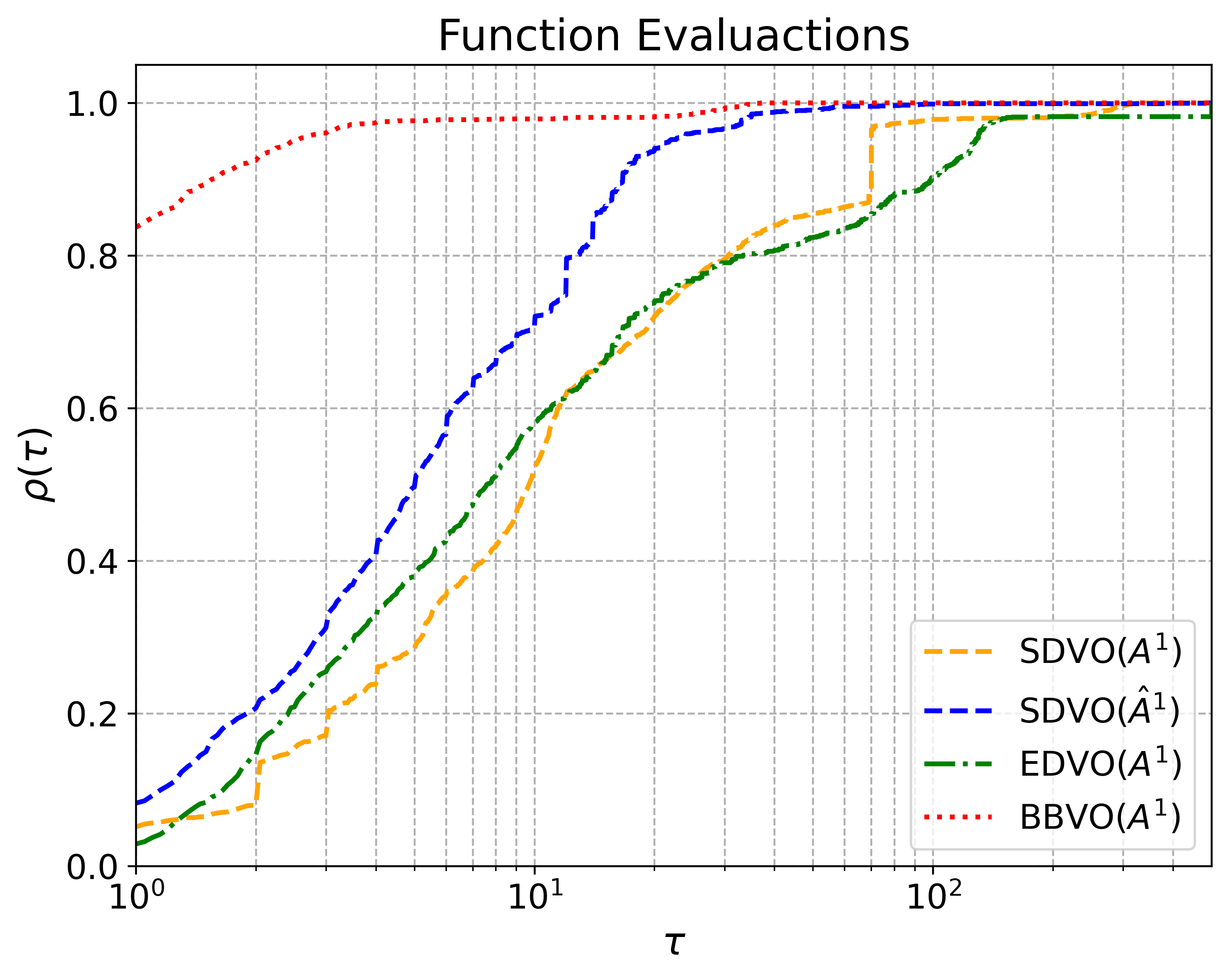} 
		\end{minipage}
	}
	\subfigure[CPU Time]
	{
		\begin{minipage}[H]{.3\linewidth}
			\centering
			\includegraphics[scale=0.25]{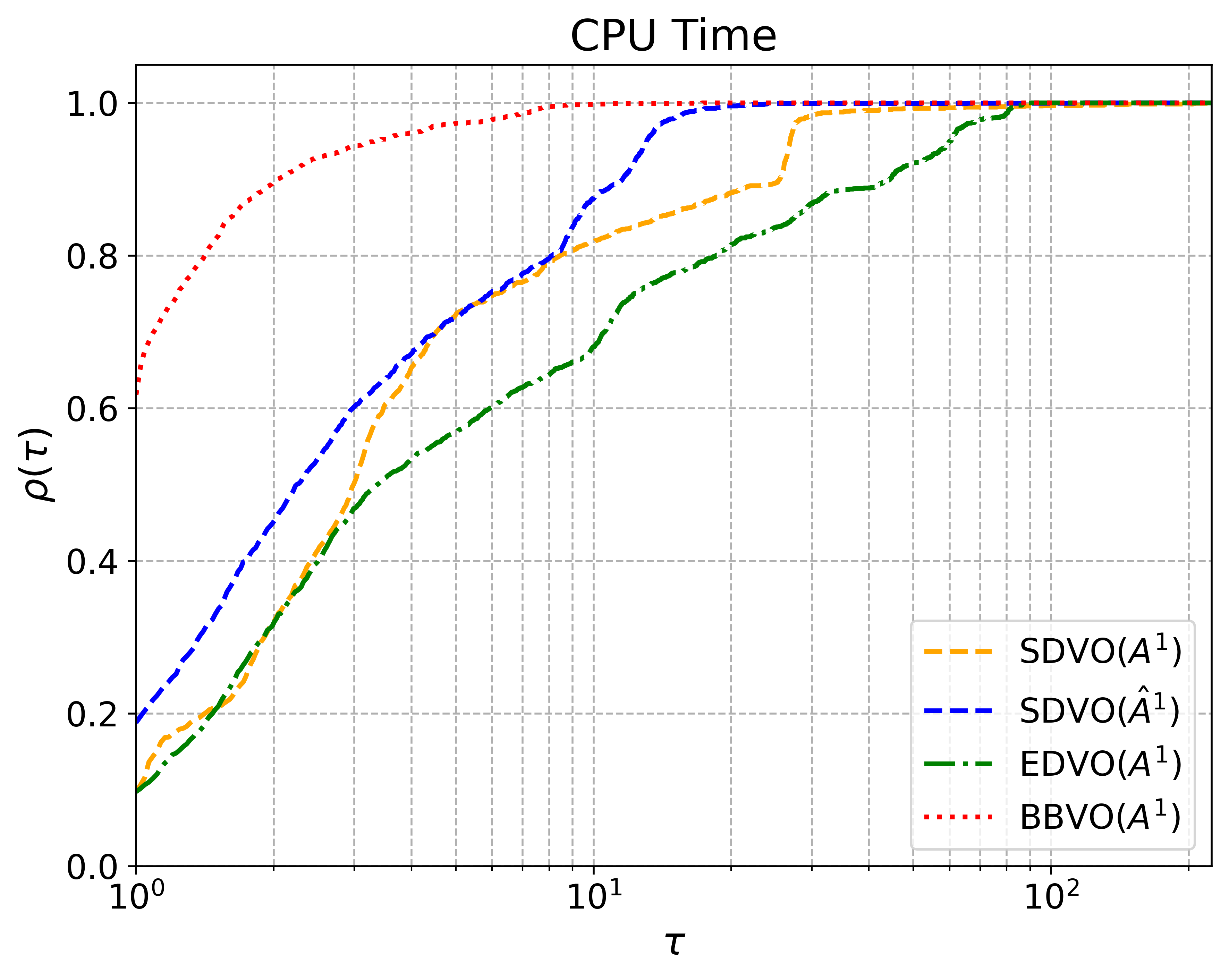} 
		\end{minipage}
	}
	\caption{Performance profiles on the test problems in Table \ref{tab2} with $K=K_{1}$.}
	\label{fk1}
\end{figure}

\subsection{Numerical results for VOPs with $K=K_{2}$}
We denote the set of transform matrices $\mathcal{A}_{2}:=\{A:AK_{2}=\mathbb{R}^{2}_{+}\}$. For SDVO, we choose $A^{2}=\begin{pmatrix} 5&\ 1\\1&\ 5 \end{pmatrix}\in\mathcal{A}_{0}$ and $\hat{A}^{2}\in\mathcal{A}_{2}$ in subproblem, respectively, where $\hat{A}^{2}:=\{A:A_{i}=A^{2}_{i}/\max\{1,\nm{\nabla F_{i}(x^{0})}_{\infty}\},~i=1,2\}$.
\begin{table}[H]
	\begin{center}
		\caption{Number of average iterations (iter), number of average function evaluations (feval), and average CPU time (time($ms$)) of SDVO, EDVO and BBDVO implemented on different test problems with {$K=K_{2}$}}\label{tab4}\vspace{-2mm}
	\end{center}
	\centering
	\resizebox{.95\columnwidth}{!}{
		\begin{tabular}{lrrrrrrrrrrrrrrr}
			\hline
			Problem &
			\multicolumn{3}{l}{SDVO with $A=A^{2}$} &
			&
			\multicolumn{3}{l}{SDVO with $A=\hat{A}^{2}$} &
			\multicolumn{1}{l}{} &
			\multicolumn{3}{l}{EDVO with $A=A^{2}$} &
			\multicolumn{1}{l}{} &
			\multicolumn{3}{l}{BBDVO with $A=A^{2}$} \\ \cline{2-4} \cline{6-8} \cline{10-12}  \cline{14-16} 
			&
			iter &
			feval &
			time &
			\textbf{} &
			iter &
			feval &
			time &
			&
			iter &
			feval &
			time &
			&
			iter &
			feval &
			time \\ \hline
			BK1        & 22.29 & 85.27  & 2.52  &  & 6.67  & 11.79 & 1.25  &  & 20.69 & 26.53 & 1.92  &  & \textbf{1.00} & \textbf{1.00} & \textbf{0.21} \\
			DD1        & 17.16 & 62.14  & 3.37  &  & 46.38 & 47.71 & 5.03  &  & 133.89 & 134.00 & 13.33 &  & \textbf{4.84} & \textbf{5.29} & \textbf{0.83} \\
			Deb        & 19.48 & 164.15 & 5.43  &  & 18.22 & 135.69 & 4.63  &  & 21.95 & 141.27 & 4.40  &  & \textbf{9.47} & \textbf{48.94} & \textbf{2.02} \\
			FF1        & 13.52 & 18.80  & 1.96  &  & 14.76 & 67.80 & 3.04  &  & 17.68 & 122.48 & 4.04  &  & \textbf{4.72} & \textbf{5.74} & \textbf{0.91} \\
			Hil1       & 14.69 & 75.19  & 3.86  &  & 17.60 & 82.61 & 4.44  &  & 16.61 & 42.27 & 2.65  &  & \textbf{8.38} & \textbf{9.24} & \textbf{1.52} \\
			Imbalance1 & 25.76 & 105.22 & 4.60  &  & 500.00 & 610.90 & 61.64 &  & 500.00 & 500.04 & 48.91 &  & \textbf{4.35} & \textbf{5.76} & \textbf{0.73} \\
			JOS1a      & 46.11 & 46.11  & 4.47  &  & 4.58  & 10.16 & 1.77  &  & 48.35 & 51.67 & 6.34  &  & \textbf{1.00} & \textbf{1.00} & \textbf{0.26} \\
			LE1        & 9.64  & 47.62  & 2.05  &  & 14.79 & 79.00 & 3.08  &  & 11.40 & 54.09 & 1.95  &  & \textbf{7.58} & \textbf{43.00} & \textbf{1.63} \\
			PNR        & 17.22 & 103.38 & 3.60  &  & 13.30 & 50.20 & 2.41  &  & 11.12 & 36.16 & 1.69  &  & \textbf{6.80} & \textbf{8.83} & \textbf{1.04} \\
			WIT1       & 23.12 & 140.31 & 5.32  &  & 311.78 & 975.91 & 44.38 &  & 10.46 & 29.67 & 1.52  &  & \textbf{8.66} & \textbf{9.95} & \textbf{1.27} \\
			\hline
		\end{tabular}
	}
\end{table}
\begin{figure}[H]
	\centering
	\subfigure[Iterations]
	{
		\begin{minipage}[H]{.3\linewidth}
			\centering
			\includegraphics[scale=0.25]{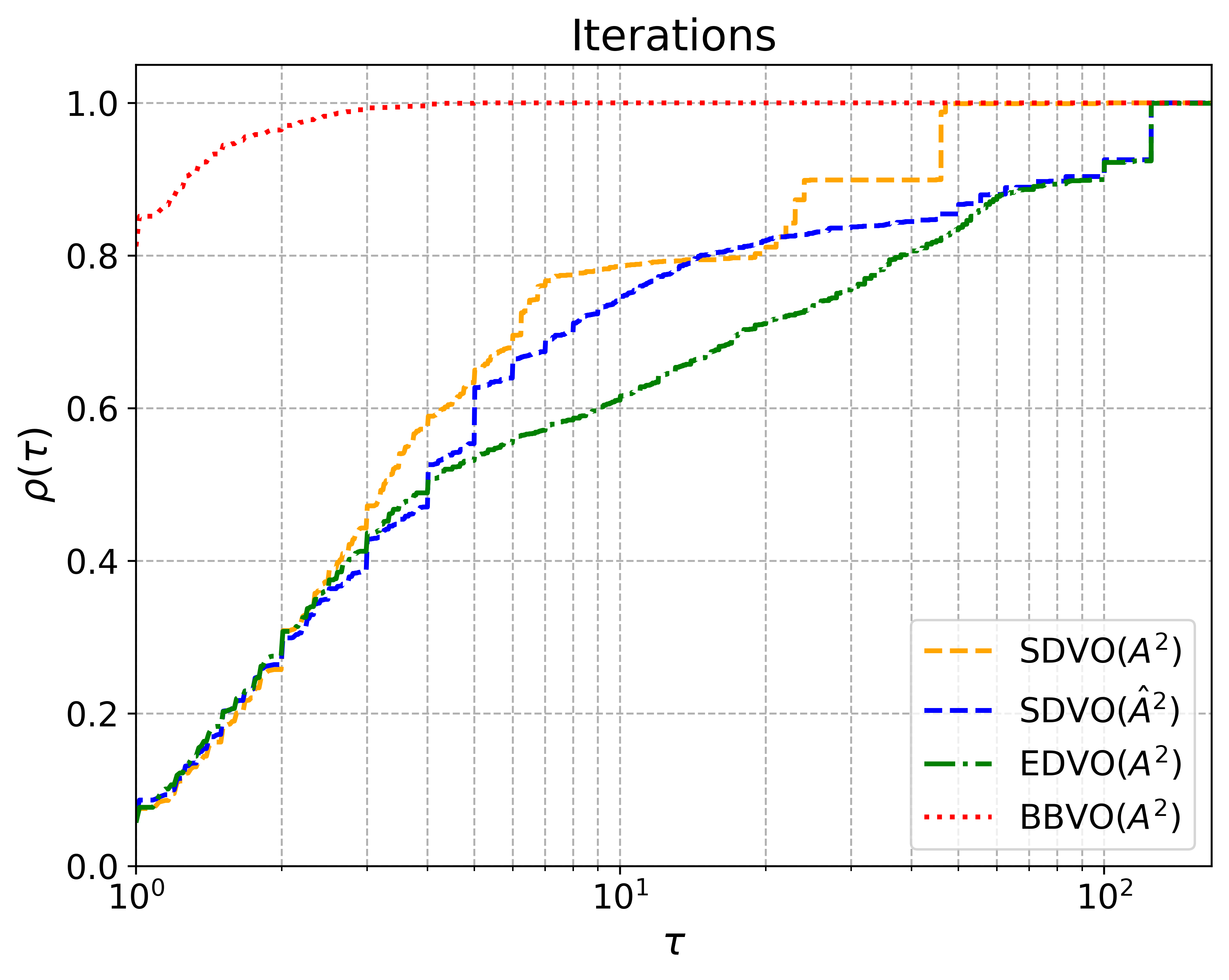} 
		\end{minipage}
	}
	\subfigure[Function Evaluations]
	{
		\begin{minipage}[H]{.3\linewidth}
			\centering
			\includegraphics[scale=0.25]{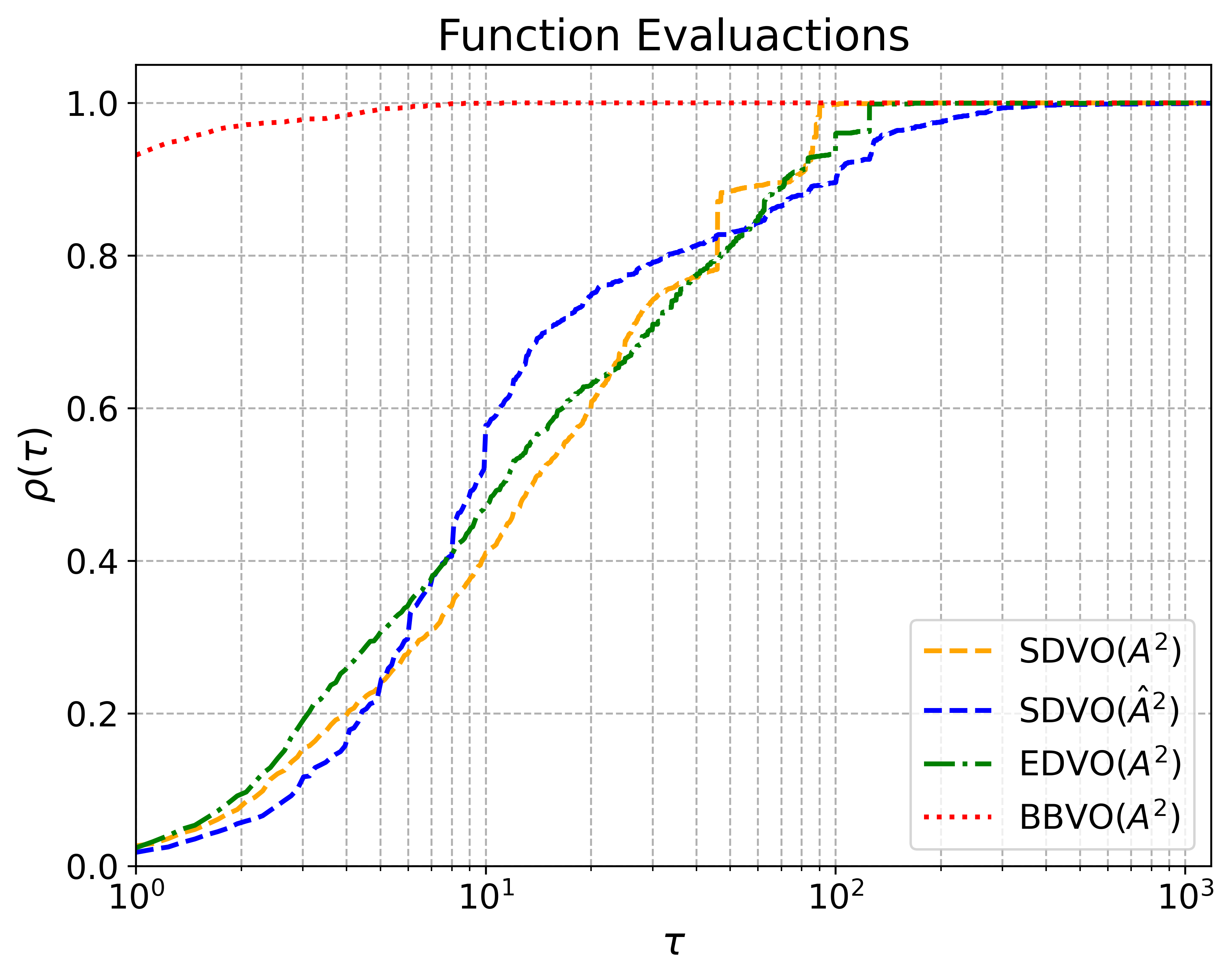} 
		\end{minipage}
	}
	\subfigure[CPU Time]
	{
		\begin{minipage}[H]{.3\linewidth}
			\centering
			\includegraphics[scale=0.25]{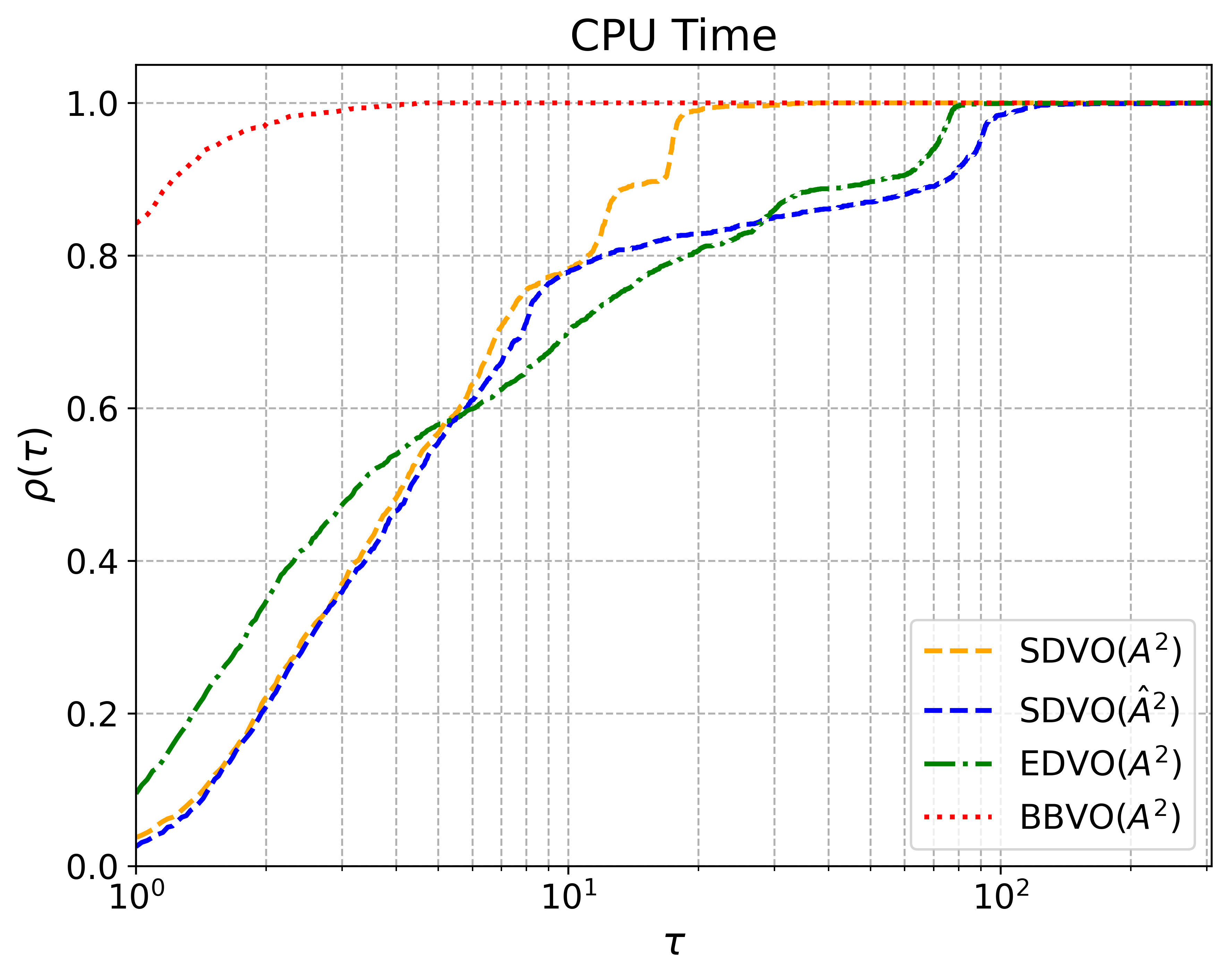} 
		\end{minipage}
	}
	\caption{Performance profiles on the test problems in Table \ref{tab2} with $K=K_{2}$.}
	\label{fk2}
\end{figure}

\noindent\begin{figure}[H]
	\centering 
	\begin{tabular}{rccccc}
		\rotatebox[origin=l]{90}{~~~~$K=\mathbb{R}^{2}_{+}$} &
		\hspace{-12pt}\includegraphics[scale=0.2]{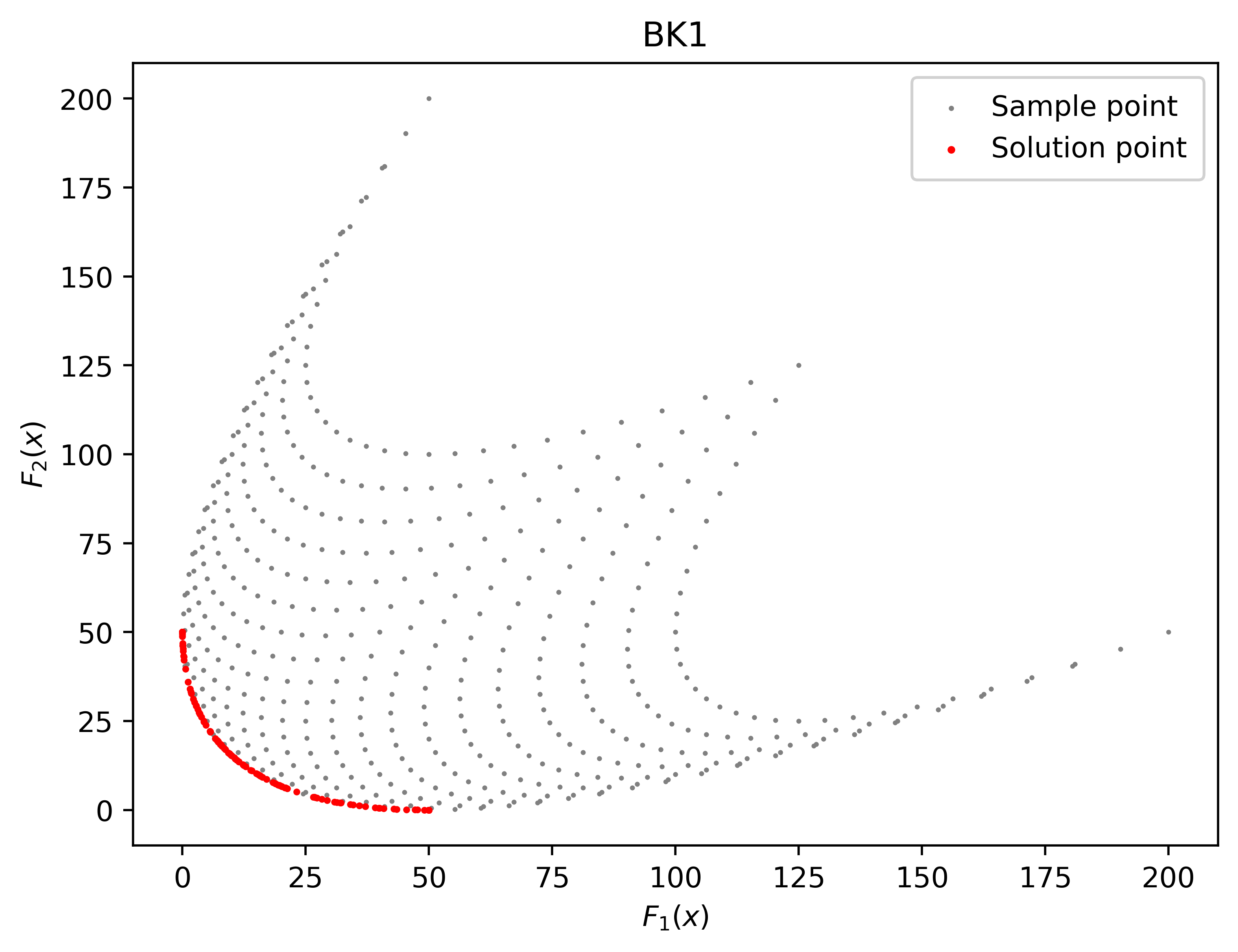} & \hspace{-12pt} \includegraphics[scale=0.2]{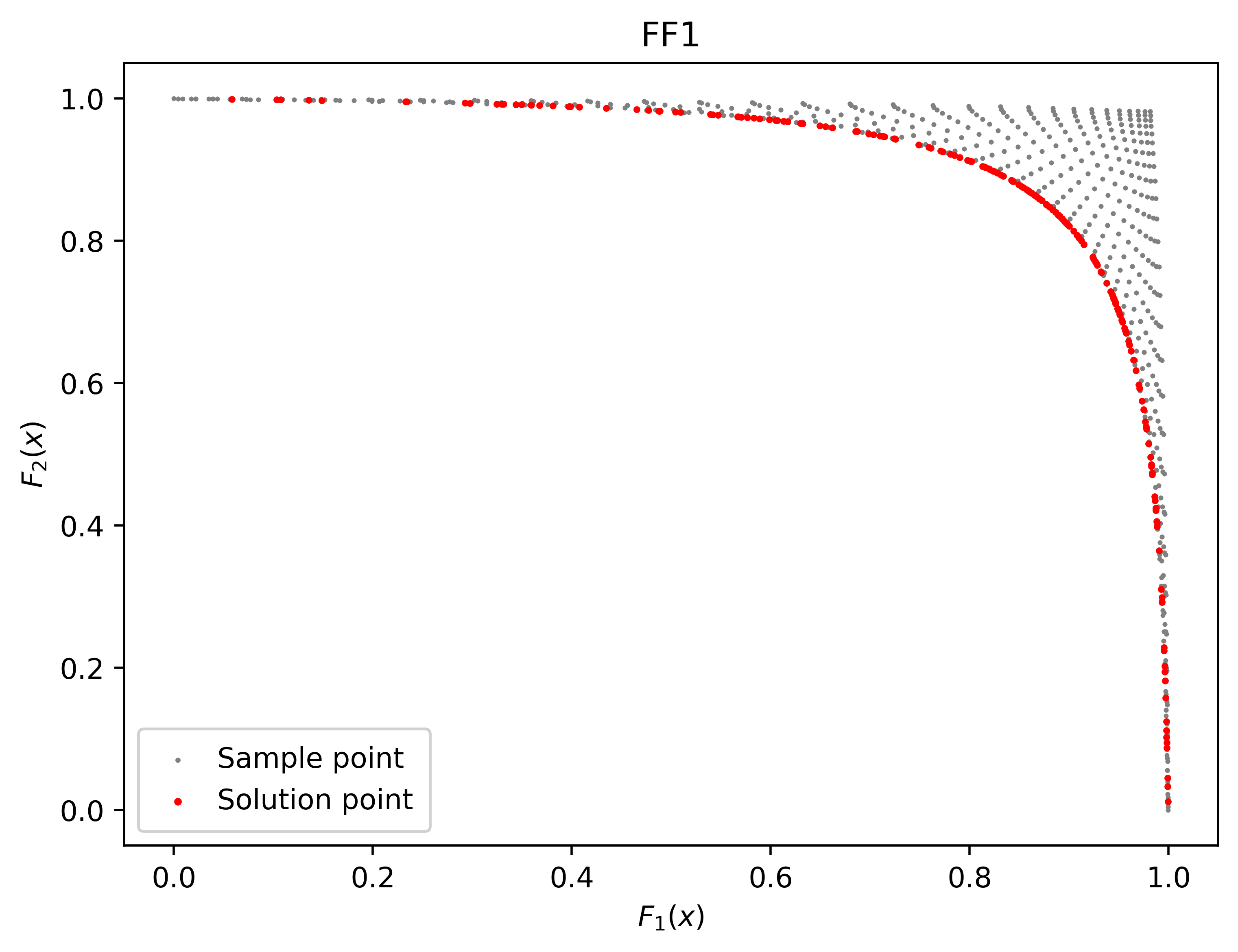}&\hspace{-12pt}\includegraphics[scale=0.2]{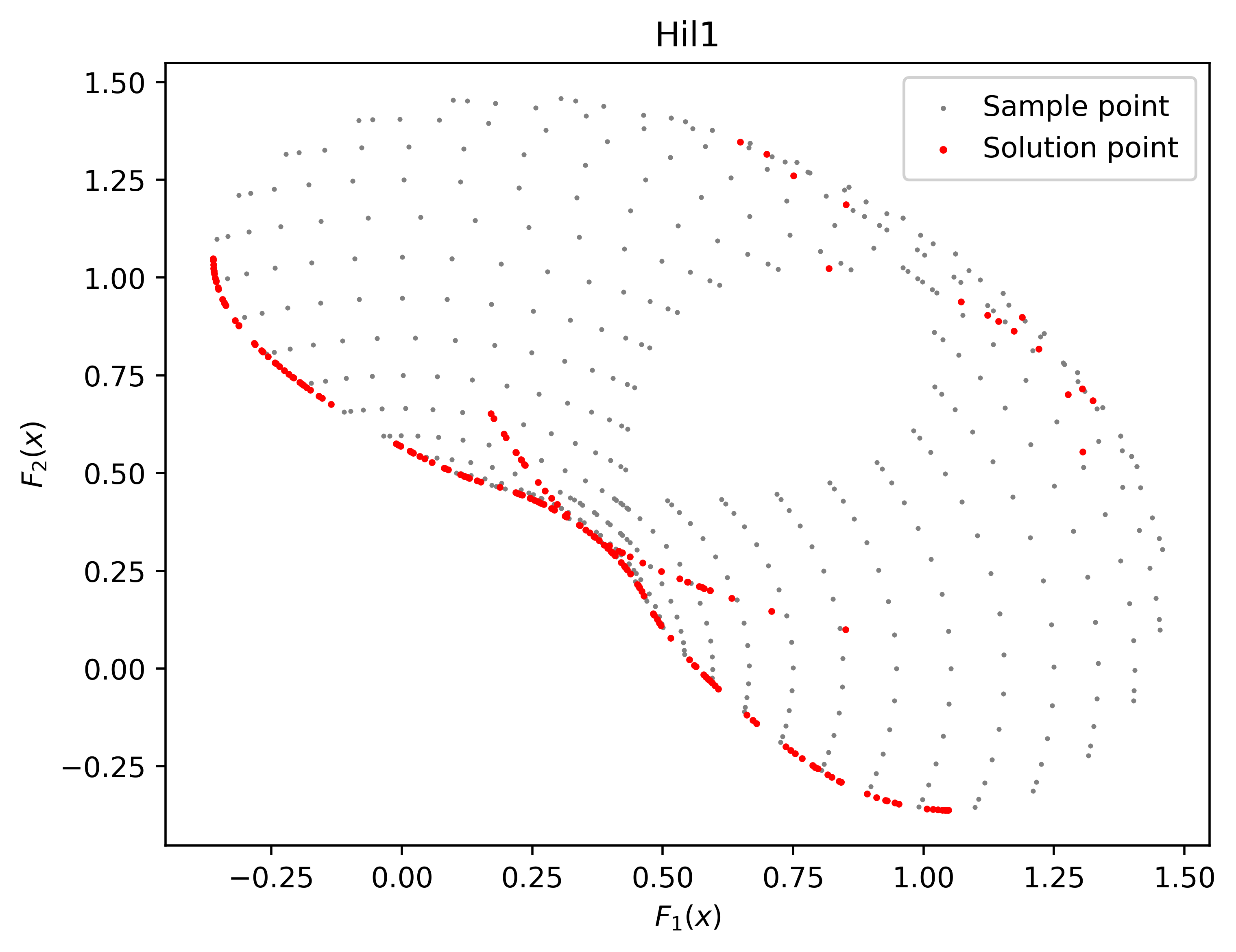}&\hspace{-12pt}\includegraphics[scale=0.2]{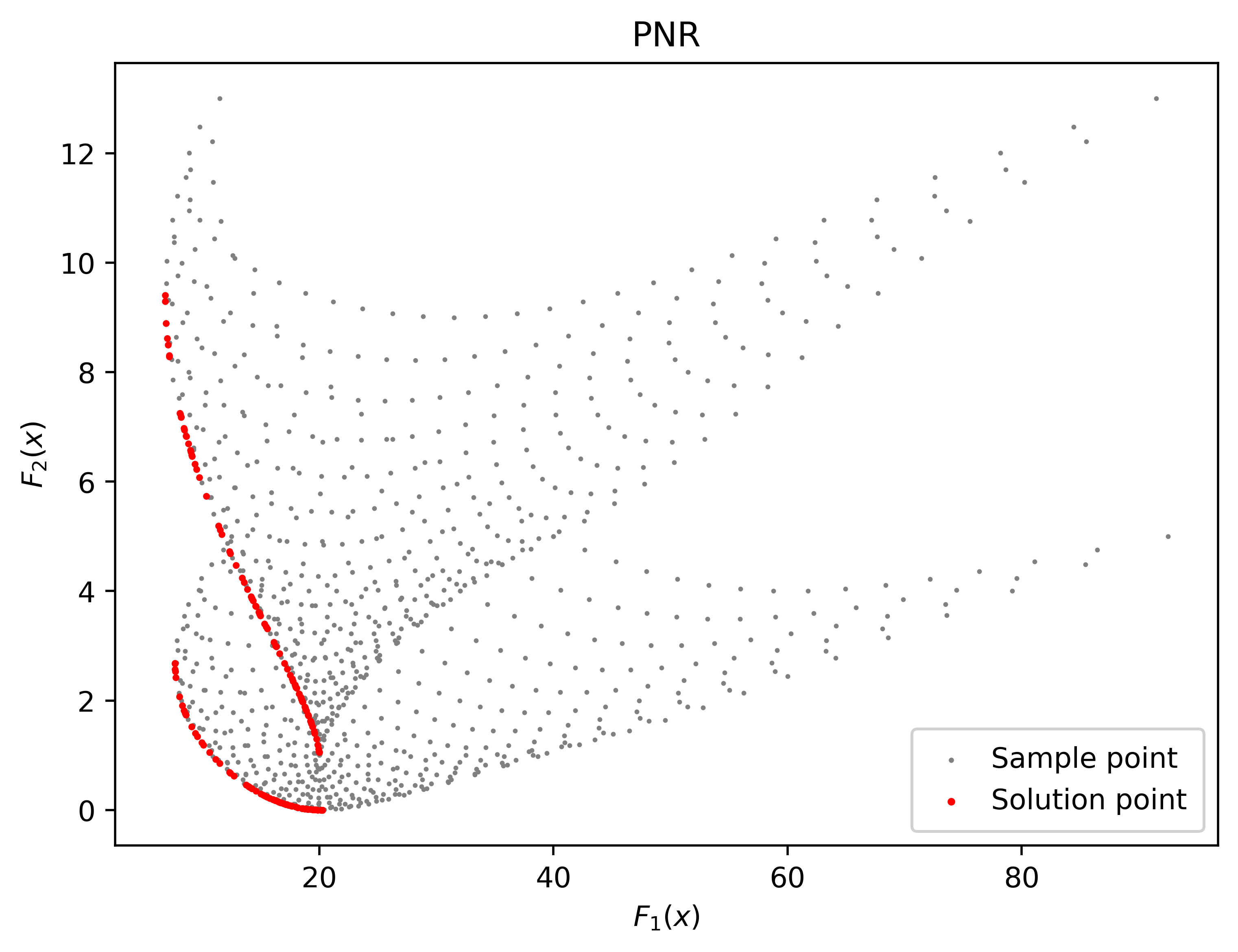}&\hspace{-12pt}\includegraphics[scale=0.2]{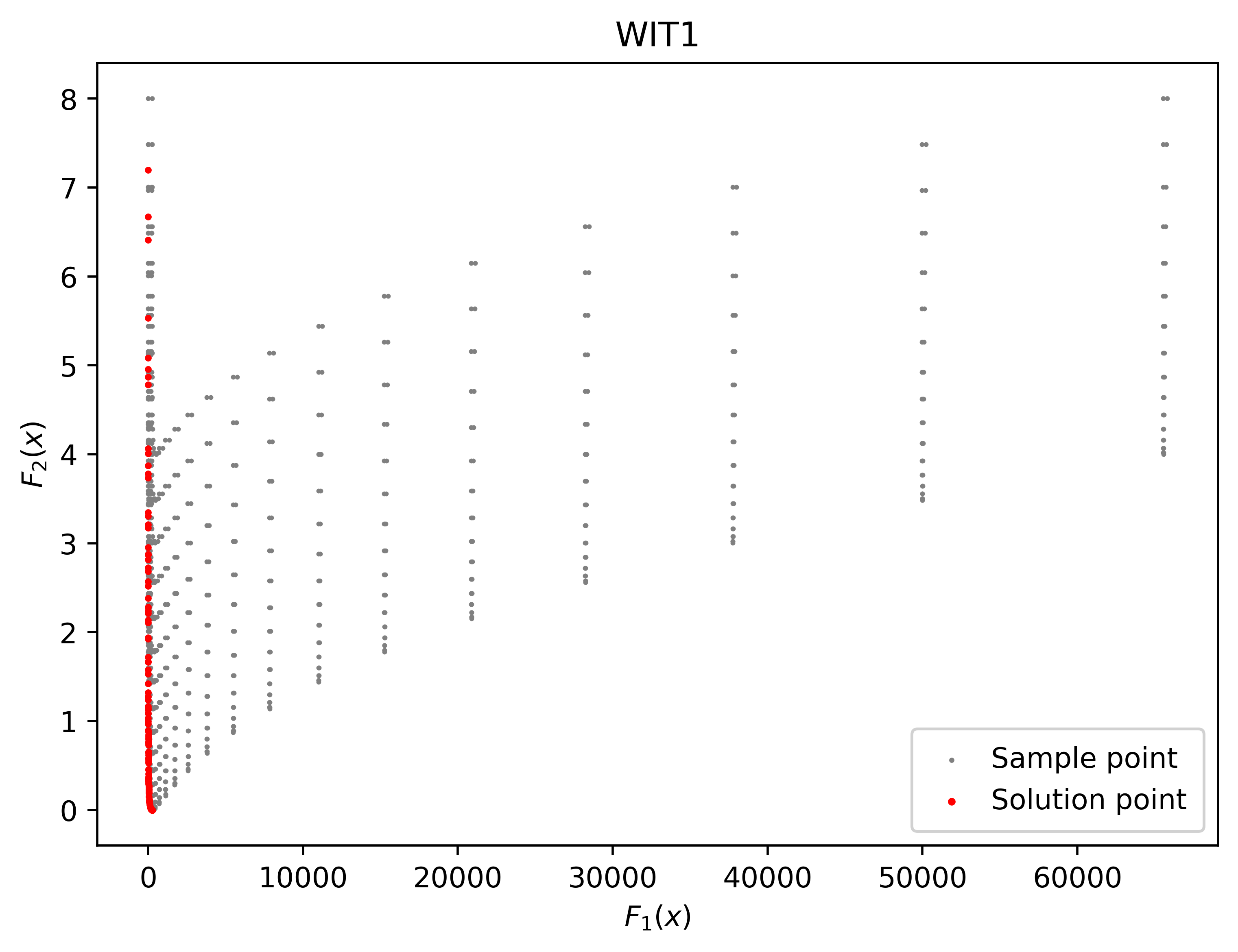}\\
		
		\rotatebox[origin=l]{90}{~~~~$K=K_{1}$} &
		\hspace{-12pt}\includegraphics[scale=0.2]{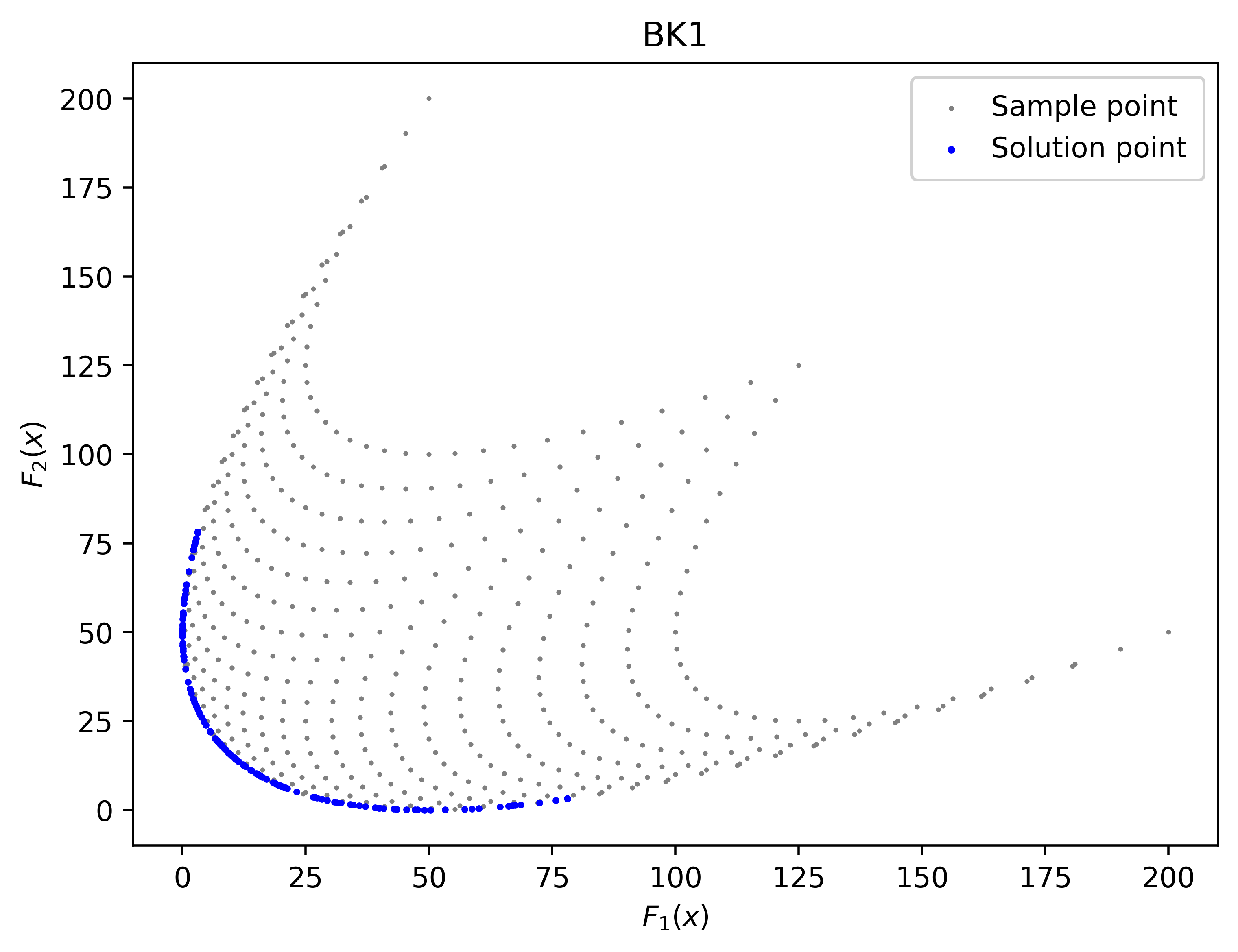} & \hspace{-12pt} \includegraphics[scale=0.2]{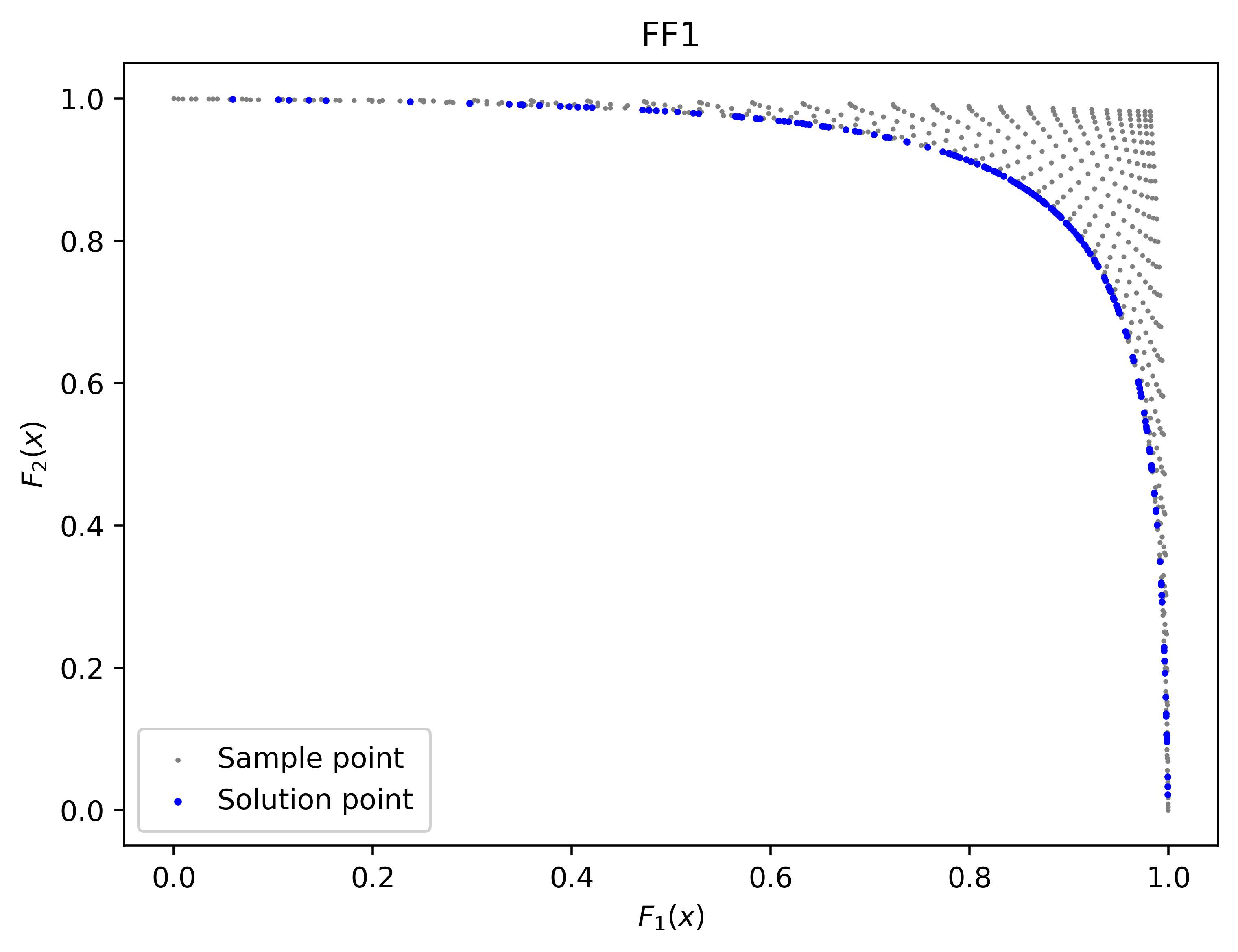}&\hspace{-12pt}\includegraphics[scale=0.2]{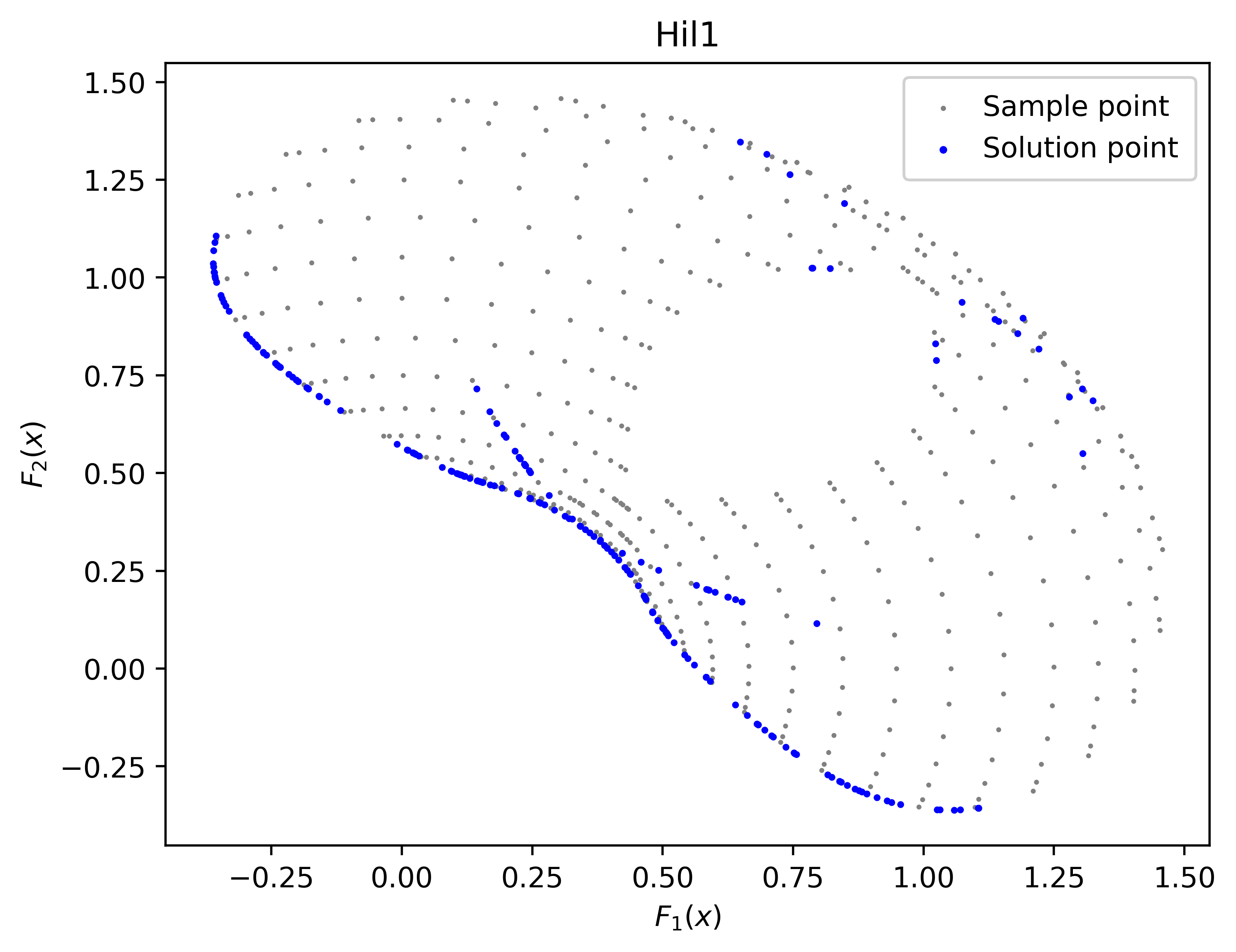}&\hspace{-12pt}\includegraphics[scale=0.2]{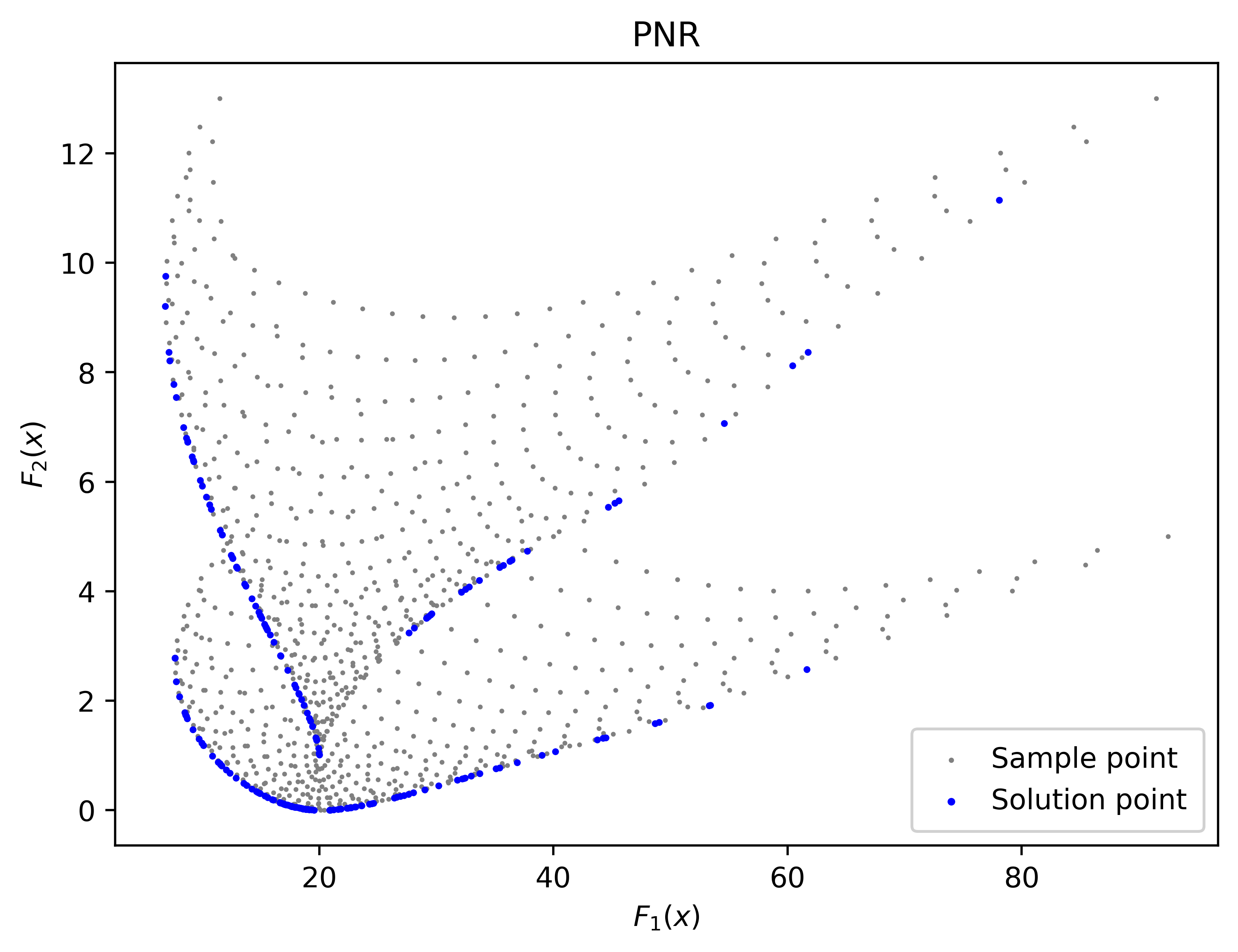}&\hspace{-12pt}\includegraphics[scale=0.2]{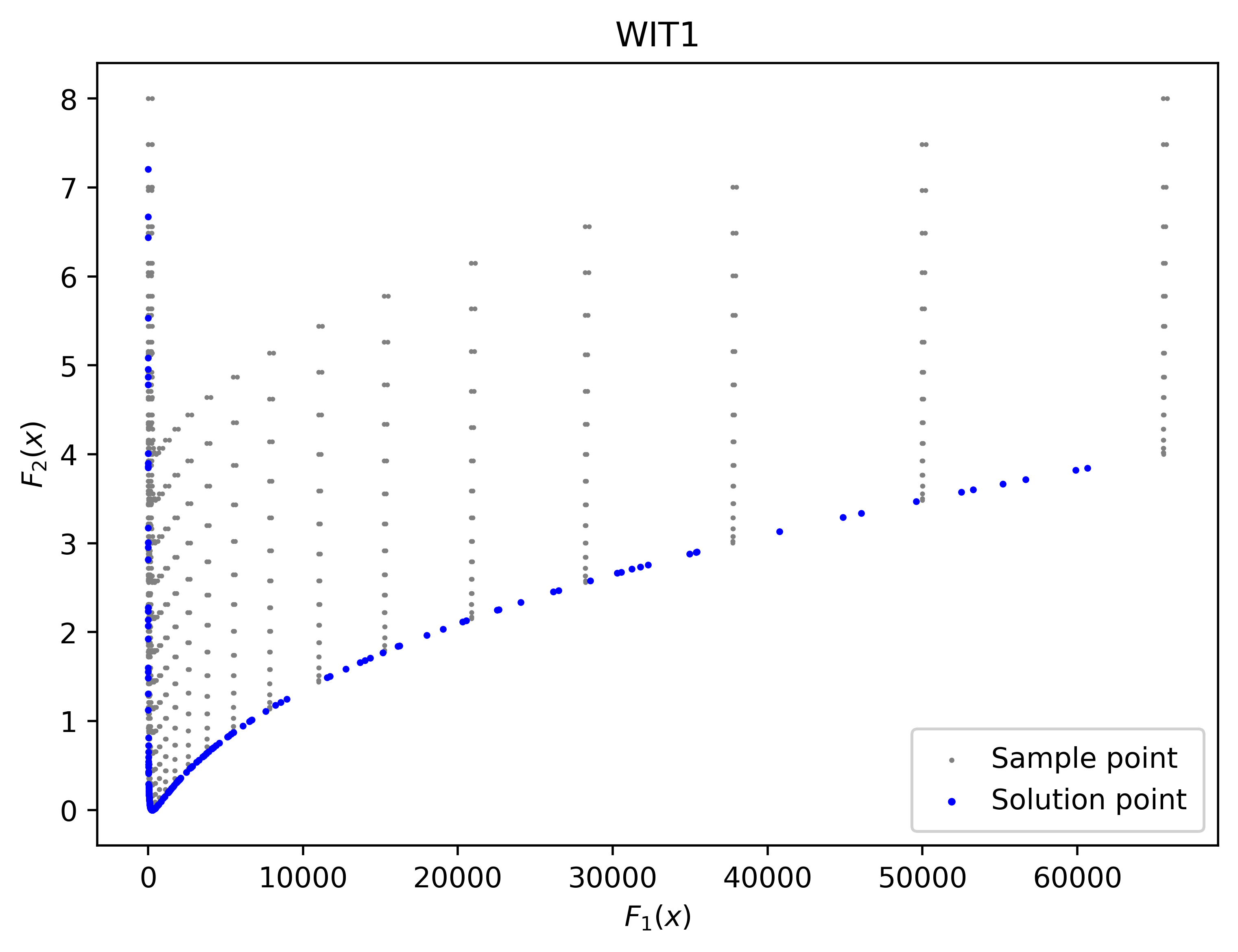}\\
		
		\rotatebox[origin=l]{90}{~~~~$K=K_{2}$} &
		\hspace{-12pt}\includegraphics[scale=0.2]{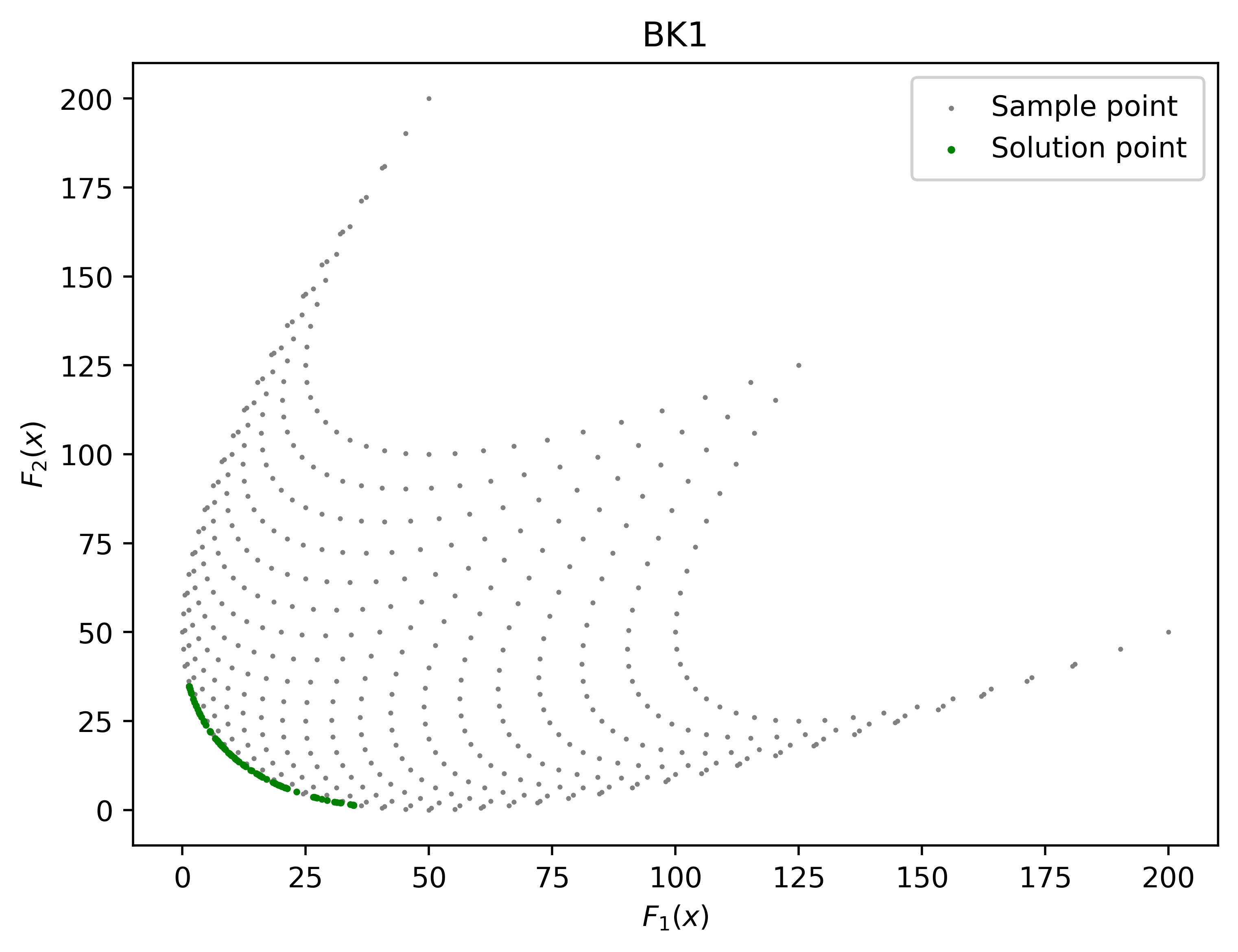} & \hspace{-12pt} \includegraphics[scale=0.2]{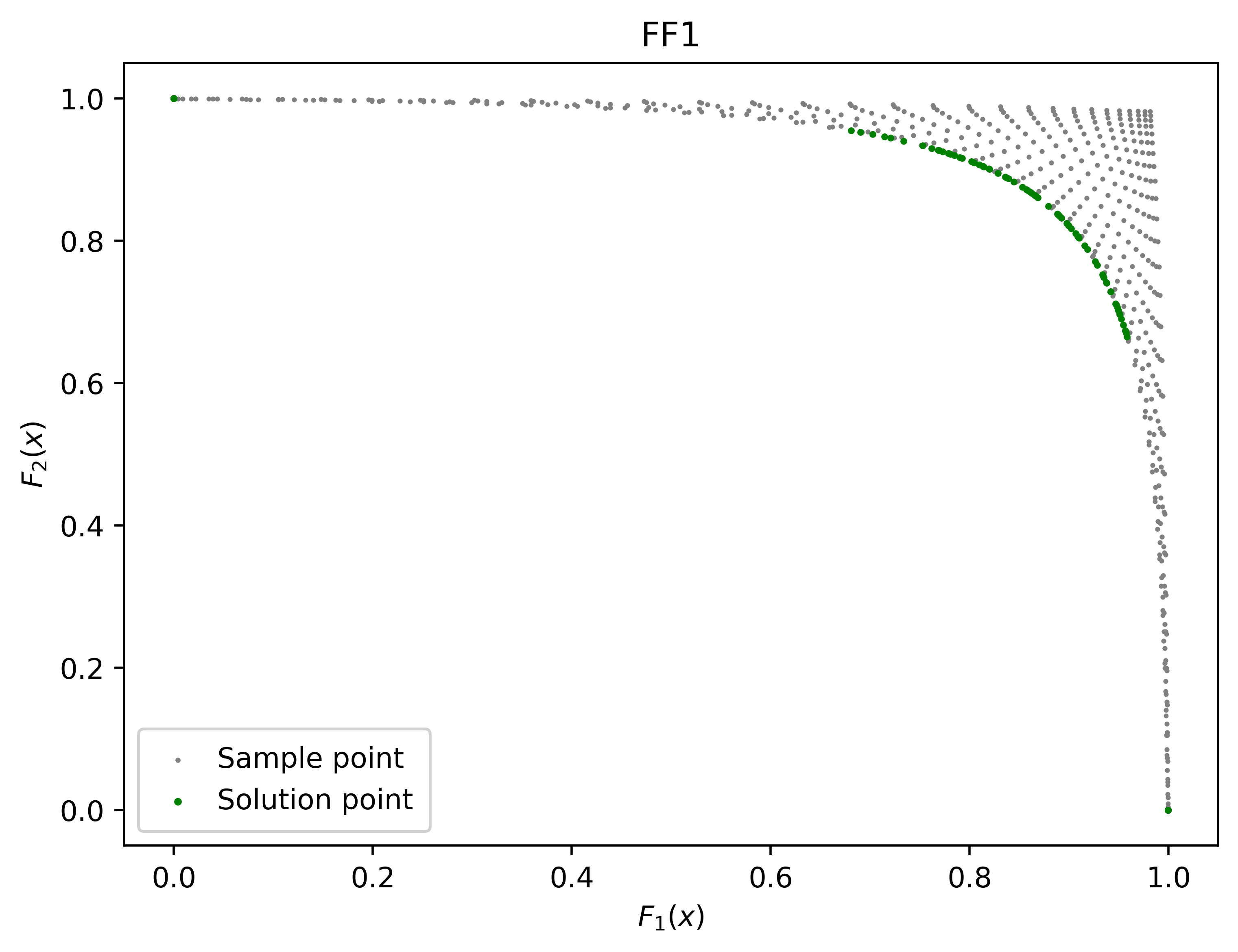}&\hspace{-12pt}\includegraphics[scale=0.2]{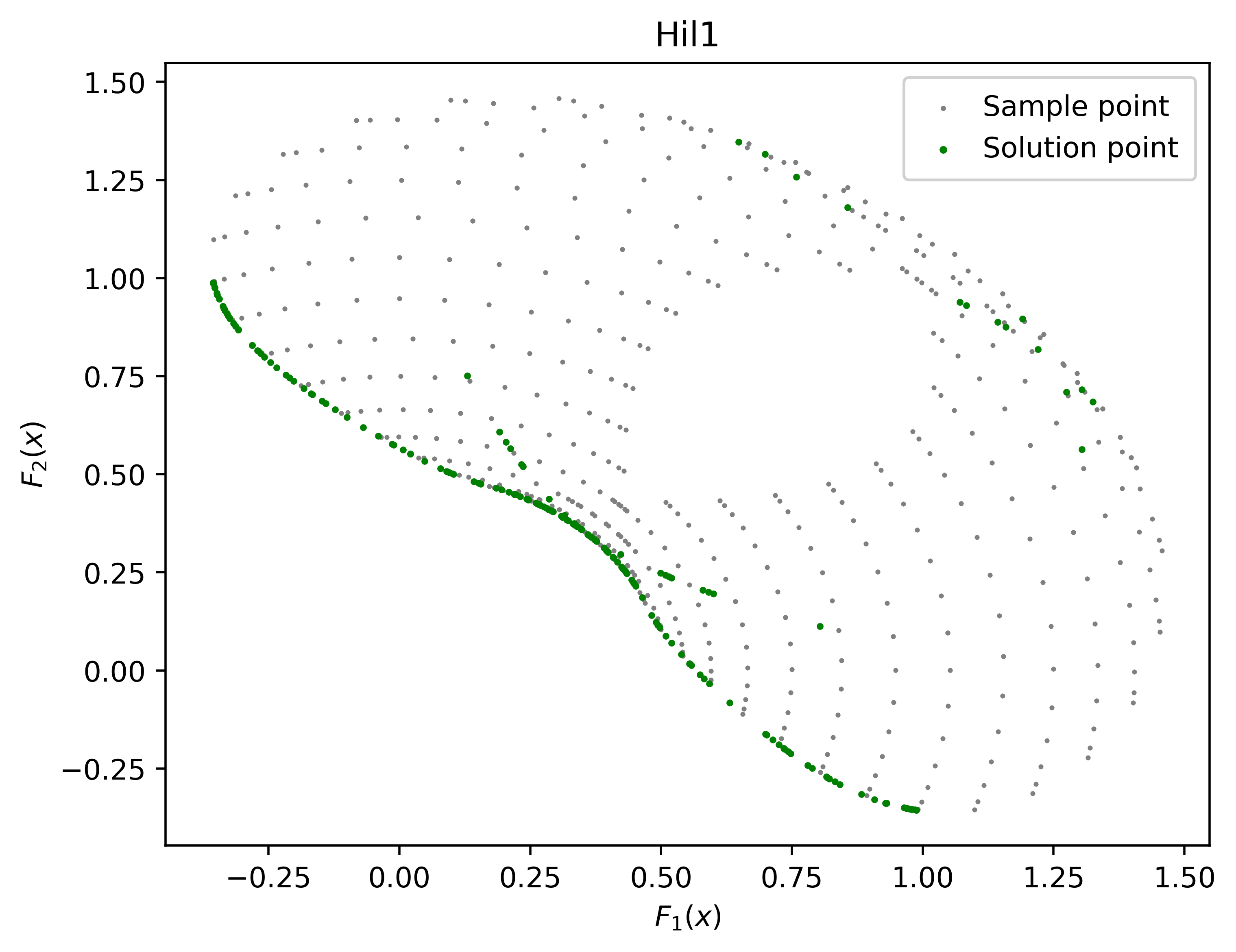}&\hspace{-12pt}\includegraphics[scale=0.2]{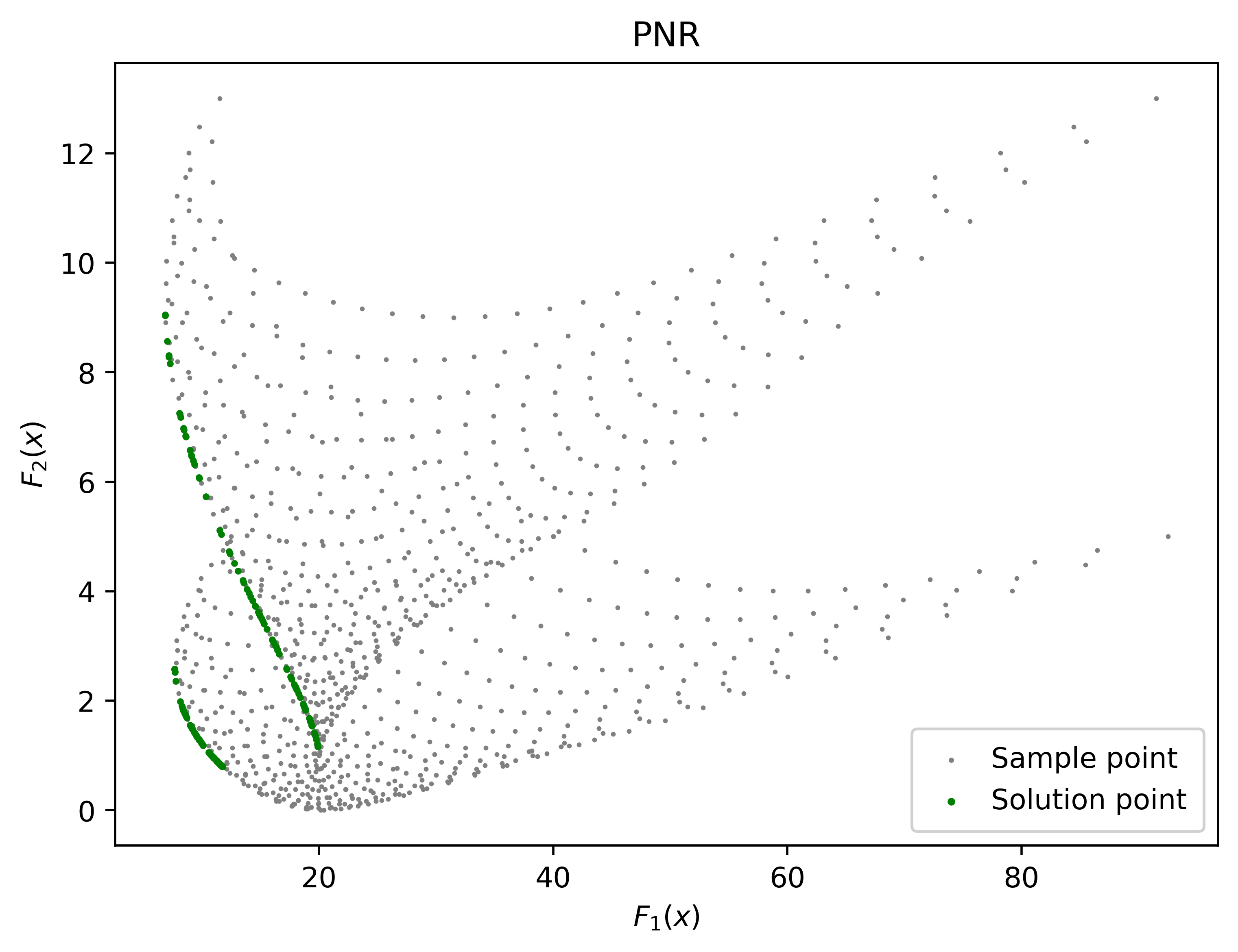}&\hspace{-12pt}\includegraphics[scale=0.2]{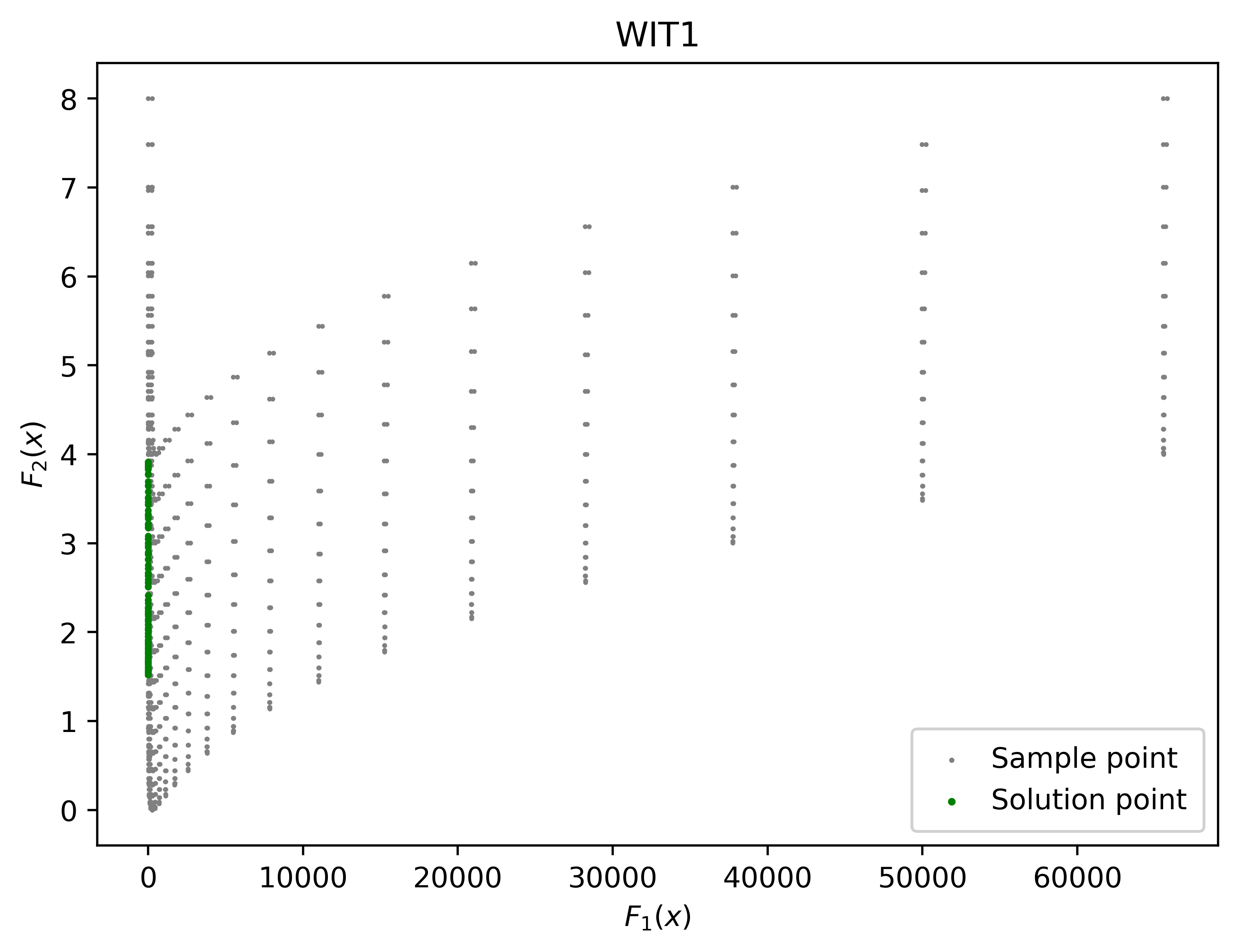}\\
		
		~~~~&	(a) BK1 &(b) FF1 &(c) Hil1 & (d) PNR & (e) WIT1\\
	\end{tabular}
	\caption{Numerical results in value space obtained by BBDVO for problems BK1, FF1, Hil1, PNR and WIT1 with $K=\mathbb{R}^{2}_{+}$, $K=K_{1}$ and $K=K_{2}$, respectively.}
	\label{f1}
\end{figure}

\par For test problems with different partial orders, the number of average iterations (iter), number of average function evaluations (feval), and average CPU time (time(ms)) of the different algorithms are listed in Tables \ref{tab2}, \ref{tab3} and \ref{tab4}, respectively. We conclude that BBDVO outperforms SDVO and EDVO, especially for problems DD1 and Imbalance1. For SDVO, its performance is sensitive to the choice of transform matrix, changing the transform matrix in subproblem cannot improve the performance on all test problems. Naturally, a question arises that how to choose an appropriate transform matrix for a specific test problem in SDVO. It is worth noting that BBDVO can be viewed as SDVO with variable transform matrices ($\Lambda^{k}A$ is a transform matrix of $K$) and thus enjoys promising performance on these test problems. This provides a positive answer to the question. EDVO can also be viewed as SDVO with variable transform matrices, it generates descent directions with norm less than $1$ (the minimizer of subproblem is the minimal norm element of the convex hull of some unit vectors), decelerating the convergence in large-scale problems (the initial point may be far from the Pareto set). Figs. \ref{fk0}, \ref{fk1} and \ref{fk2} present the performance profiles based on iterations, function evaluations, and CPU time. The results confirm that the proposed BBDVO significantly outperforms SDVO and EDVO.
\par Fig. \ref{f1} plots the final points obtained by BBDVO on problems BK1, FF1, Hil1, PNR and WIT1 with $K=\mathbb{R}^{2}_{+}$, $K=K_{1}$ and $K=K_{2}$, respectively. We can observe that enlarging the partial order cone reduces the number of obtained Pareto critical points, especially in the long tail regions, where improving one objective function slightly can sacrifice the others greatly. As a result, we can use an order cone containing the non-negative orthant in real-world MOPs to obtain Pareto points with a better trade-off. 

\section{Conclusions}
In this paper, we develop a unified framework and convergence analysis of descent methods for VOPs from a majorization-minimization perspective. We emphasize that the convergence rate of a descent method can be improved by narrowing the surrogate functions. By changing the base in subproblems, we elucidate that choosing a tighter surrogate function is equivalent to selecting an appropriate base of the dual cone. From the majorization-minimization perspective, we employ Barzilai-Borwein method to narrow the local surrogate functions and propose a Barzilai-Borwein descent method for VOPs polyhedral cone. The proposed method is not sensitive to the choice of transform matrix, which affects the performance of SDVO. Numerical experiments confirm the efficiency of the proposed method.
\par From a majoriztion-minimization perspective, we also rediscover the preconditioned Barzilai-Borwein method for MOPs. This highlights the versatility of the majorization-minimization principle as a powerful framework for designing novel algorithms in vector optimization. In future work, it is worth analyzing proximal gradient methods and high-order methods for VOPs within the majorization-minimization framework. Additionally, exploring solution methods for VOPs with non-polyhedral cones presents an intriguing avenue for further research.

\section*{Acknowledgement}
This work was funded by the National Key Research and Development Program of China [grant number 2023YFA1011504]; the Major Program of the National Natural Science Foundation of China [grant numbers 11991020, 11991024]; the Key Program of the National Natural Science Foundation of China [grant number 12431010]; the General Program of the National Natural Science Foundation of China [grant number 12171060]; NSFC-RGC (Hong Kong) Joint Research Program [grant number 12261160365]; the Team Project of Innovation Leading Talent in Chongqing [grant number CQYC20210309536]; the Natural Science Foundation of Chongqing [grant numbers ncamc2022-msxm01, CSTB2024NSCQ-LZX0140]; Major Project of Science and Technology Research Rrogram of Chongqing Education Commission of China [grant number KJZD-M202300504]; the Chongqing Postdoctoral Research Project Special Grant [grant number 2024CQBSHTB1007];
the Science and Technology Research Program of Chongqing Education commission of China [grant number KJQN202400520] and Foundation of Chongqing Normal University [grant numbers 22XLB005, 22XLB006].

\section*{Data Availability}
The data that support the findings of this study are available from the first author, [Jian Chen], upon reasonable request.

\begin{spacing}{0.95}
	\bibliographystyle{model5-names}
	\biboptions{authoryear}
	\bibliography{references}
\end{spacing}
\end{document}